\newtheorem{theorem}{Theorem}[section]
\newtheorem{lemma}[theorem]{Lemma}
\newtheorem{remark}[theorem]{Remark}
\numberwithin{figure}{section}
\numberwithin{table}{section}
\newcommand{\multi}{\gamma}
\newcommand{\mumu}{\mu}
\newcommand{\muI}{\mathcal{I}}
\newcommand{\muhat}{\widehat{\mumu}}
\begin{document}

\title{Periodic localized traveling waves\\ in the two-dimensional suspension bridge equation}

\author{Lindsey van der Aalst\thanks{VU Amsterdam, Department of Mathematics, De Boelelaan 1111, 1081 HV Amsterdam, The Netherlands. \href{mailto:l.j.w.van.der.aalst@vu.nl}{l.j.w.van.der.aalst@vu.nl}; partially supported by NWO grant 613.009.132 (Corresponding author)} \and Jan Bouwe van den Berg\thanks{VU Amsterdam, Department of Mathematics, De Boelelaan 1111, 1081 HV Amsterdam, The Netherlands. \href{mailto:janbouwe@few.vu.nl}{janbouwe@few.vu.nl}; partially supported by NWO grant 613.009.132} \and Jean-Philippe Lessard\thanks{McGill University, Department of Mathematics and Statistics, Burnside Hall, 805 Sherbrooke Street West, Montreal, Québec H3A 0B9, Canada. \href{mailto:jp.lessard@mcgill.ca}{jp.lessard@mcgill.ca}} }

\maketitle
\begin{abstract}
In the dynamics generated by the suspension bridge equation, traveling waves are an essential feature. The existing literature focuses primarily on the idealized one-dimensional case, while traveling structures in two spatial dimensions have only been studied via numerical simulations. We use computer-assisted proof methods based on a Newton-Kantorovich type argument to find and prove periodic localized traveling waves in two dimensions. 
The main obstacle is the exponential nonlinearity in combination with the resulting large amplitude of the localized waves. Our analysis hinges on establishing computable bounds to control the aliasing error in the computed Fourier coefficients. This leads to existence proofs of different traveling wave solutions, accompanied by small, explicit, rigorous bounds on the deficiency of numerical approximations. This approach is directly extendable to other wave equation models and elliptic partial differential equations with analytic nonlinearities, in two as well as in higher dimensions.
\end{abstract}

\section{Introduction}
The purpose of this paper is to find and prove suitably localized traveling wave solutions to the suspension bridge equation in two spatial dimensions. Historically, one of the stimuli for studying suspension bridges was the collapse of the Tacoma Narrows Bridge in 1940 \cite{tacoma_amann}. The collapse was caused by the blowing of a strong wind, creating waves in the bridge, which eventually resulted in failure of the construction. This catastrophic event motivated the development of mathematical models of suspension bridges. Overviews of the history of various mathematical models for suspension bridges can be found in \cite{Drabek_overview} and \cite{Gazzola_overview}. As argued in \cite{Drabek_overview}, in essence there are two type of models of suspension bridges: very realistic, advanced ones (e.g.~\cite{Arioli2017,McKenna_advanced,Moore_advanced}) and more simplistic models, which are better suitable for mathematical analysis. In this paper, we study the relatively simple model (see e.g.~\cite{sb_chen,SB_nagatou})
\begin{equation}\label{eq:waveeq}
    \partial^2_t U = -\Delta^2 U-e^U+1,
\end{equation}
where $U=U(X_1,X_2,t) \in \mathbb{R}$ denotes the deflection of the bridge surface in the downward direction, depending on spatial variables $X_1,X_2\in \mathbb{R}$ and time $t\geq 0$.  The bilaplacian $\Delta^2$ in \eqref{eq:waveeq} is given by
$ \Delta^2 = \left(\partial^2_{X_1}+ \partial^2_{X_2}\right)^2$. 

In numerical simulations of this partial differential equation (PDE), traveling localized structures are frequently observed (e.g.~\cite{wave_breuer,sb_chen,SB_horak}). To study these, we introduce the parameter $c \in \mathbb{R}$, representing the wave speed in the $X_1$-direction, and we reduce \eqref{eq:waveeq} further by substituting the traveling wave ansatz \begin{align}\label{eq:travwaveansatz}
	U(X_1,X_2,t)=u(X_1-ct, X_2)=u(x_1,x_2),
\end{align} 
where the traveling wave profile $u$ satisfies 
\begin{align}\label{eq:SBeq}
    \Delta^2u+c^2 \partial^2_{x_1}u&+e^u-1=0.
\end{align}    

In one spatial dimension, the traveling wave equation is an ordinary differential equation (ODE) rather than a PDE, i.e.,
\begin{align}\label{eq:1DSB}
u'''' + c^2 u''+e^u-1=0.
\end{align}
This one-dimensional suspension bridge equation has been studied extensively (e.g.~\cite{wave_breuer,sb_chen,sb_mckenna,SB_nagatou,Smets_homoclinic}). In \cite{Smets_homoclinic}, it was proven that for \textit{almost all} $c \in (0,\sqrt{2})$ there exist at least one homoclinic solution of \eqref{eq:1DSB}. These homoclinic solutions correspond to planar traveling wave solutions of \eqref{eq:waveeq} that are localized in the $X_1$-direction. In addition, existence of at least one homoclinic solution was proven for all $c \in [0.71,1.37]$ in \cite{BergBreden2018} using computer-assisted means and for all $c \in (0,0.74)$  in \cite{SantraWei} using variational methods. Complementing these findings, $36$ homoclinic solution were proven for $c=1.3$ in \cite{wave_breuer}. Orbital (in)stability of these solutions was investigated in \cite{SB_nagatou}. 

In \cite{PeletierTroy}, existence of periodic, instead of homoclinic, solutions of \eqref{eq:1DSB} was proven. The authors developed a method for proving existence of two families of (multibump) periodic solutions for all $c \in (0, \sqrt{2})$. Both families consist of sets of distinct periodic traveling patterns.

It is observed in many of the above mentioned papers that for $c$ approaching $\sqrt{2}$, the amplitudes of the waves tend to zero, leading to oscillatory solutions. On the other hand, when $c$ approaches zero, the amplitudes of the waves go to infinity. Indeed this behavior is known to hold in any dimension~\cite{Santra_stability}. At $c=0$, solutions no longer exist \cite{PeletierTroy,Santra_stability}. 

In contrast to the many analytic results for the one-dimensional case summarized above, the two-dimensional traveling wave problem has so far only been studied numerically. The simulations presented in \cite{SB_horak} suggest that localized (in both spatial directions) traveling structures appear in the dynamics of \eqref{eq:waveeq}. 
However, to our knowledge, an existence proof is lacking.

Rather than studying the idealized setting of localized solutions of \eqref{eq:SBeq} on an unbounded two-dimensional domain, which is certainly mathematically interesting but for now out of reach, we reduce technical difficulties by considering a large domain with periodic boundary conditions. We are interested in the situation where the bridge is much longer than the characteristic length scale of the observed localized traveling patterns. Physically, the length scale along the $X_1$-direction of the bridge is of primary importance. To reduce the influence of the boundary condition in the $X_2$-direction, we focus on solutions which are localized in both spatial dimensions, as was done in \cite{SB_horak}. Of course, both an infinitely long bridge and a long ``periodic'' bridge are merely mathematical idealizations.

To be precise, we introduce constants $L_1,L_2>0$. We set out to prove traveling wave solutions on a bridge of width $2L_2$ (in the $X_2$-direction) that are $2L_1$ periodic in the $X_1$-direction. 
By using computer-assisted techniques we will be able to inspect that the localization length scale of the solution found is indeed much smaller than $L_1$ and $L_2$.

The natural ``free'' boundary conditions in the $X_2$-direction for the bilaplacian are 
$\Delta U\left(\,\cdot\,,\pm L_2\right) = 0 
=\partial_{X_2} \Delta U\left(\,\cdot\,,\pm L_2\right)$. 
In this paper we further idealize the setting by instead imposing Neumann boundary conditions 
$
\partial_{X_2}U\left(\,\cdot\,,\pm L_2\right) = 0 
=\partial_{X_2} \Delta U\left(\,\cdot\,,\pm L_2\right)
$,
which are easier to deal with in our approach. 
Finally, we take advantage of the symmetry ($x_1 \mapsto -x_1$, $x_2 \mapsto -x_2$) and consider solutions on a rectangle $[0,L_1] \times [0,L_2]$ with Neumann boundary conditions on all sides:
%
%
\begin{subequations}\label{eq:SBBC}
\begin{align}      \partial_{x_1}u\left(0,x_2\right)=0, \quad \partial_{x_1}u\left(L_1,x_2\right)=0, \quad \partial^3_{x_1}u\left(0,x_2\right)=0, \quad \partial^3_{x_1}u\left(L_1,x_2\right)=0,\\         \partial_{x_2}u\left(x_1,0\right)=0, \quad \partial_{x_2}u\left(x_1,L_2\right)=0, \quad \partial^3_{x_2}u\left(x_1,0\right)=0, \quad \partial^3_{x_2}u\left(x_1,L_2\right)=0. 
       \end{align}
 \end{subequations} 
The symmetries allow a natural extension of a solution $u$ on the domain $[0,L_1] \times [0,L_2]$ satisfying the boundary conditions~\eqref{eq:SBBC} to a solution of~\eqref{eq:SBeq} on $\mathbb{R} \times [-L_2,L_2]$ through 
\begin{align*}
	u(x_1,x_2)=u(-x_1,x_2)=u(x_1,-x_2)=u(2L_1+x_1,x_2).
\end{align*} 

This extended profile $u$ then constitutes, via~\eqref{eq:travwaveansatz}, a periodic traveling wave solution of~\eqref{eq:waveeq} 
on $\mathbb{R} \times [-L_2,L_2]$ satisfying Neumann boundary conditions in the $X_2$-direction. 
 
In this paper, we develop a computer-assisted method for proving existence of solutions to \eqref{eq:SBeq} with boundary conditions \eqref{eq:SBBC}. An example of the type of result we obtain is stated in Theorem \ref{thm:examplesol}
and illustrated in Figure~\ref{fig:examplesol}. 
The method yields an explicit approximation of a \textit{periodically localized} traveling wave, i.e., periodic in the $X_1$-direction and localized in space when viewed on a fundamental domain of periodicity,
together with an also explicit and very small error bound on the distance between the proven solution and the depicted approximation.

\begin{theorem}\label{thm:examplesol}
For $c=1.1$ and $L=(50,40)$, there exists a solution $\hat{u}$ to \eqref{eq:SBeq} with boundary conditions \eqref{eq:SBBC} such that 
$\|\hat{u} - \bar{u}\|_\infty \leq 4\cdot10^{-8}$,
where $\bar{u}$ is the numerical approximation visualized in Figure \ref{fig:examplesol}. The finitely many non-vanishing Fourier-cosine coefficients of $\bar{u}$ can be found in the code at~\cite{github}. 
\end{theorem}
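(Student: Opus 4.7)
The plan is to recast the boundary value problem \eqref{eq:SBeq}--\eqref{eq:SBBC} as a zero-finding problem $F(a)=0$ on a Banach space of Fourier-cosine coefficients, and then to apply a Newton--Kantorovich / radii-polynomial argument centred at the numerical approximation $\bar u$. Expanding
\begin{equation*}
u(x_1,x_2)=\sum_{k_1,k_2\ge 0} a_{k_1,k_2}\cos(k_1\pi x_1/L_1)\cos(k_2\pi x_2/L_2)
\end{equation*}
turns \eqref{eq:SBeq} into an infinite algebraic system $\mathcal L_k\,a_k+[\exp(a)]_k-\delta_{k,0}=0$ for $k=(k_1,k_2)\in\mathbb N_0^2$, where $\mathcal L_k=\bigl((k_1\pi/L_1)^2+(k_2\pi/L_2)^2\bigr)^2-c^2(k_1\pi/L_1)^2$ is the diagonal symbol of the linear part and $[\exp(a)]_k$ denotes the Fourier-cosine coefficients of $e^u$, which is a convolution power series in $a$. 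I would work in a weighted $\ell^1$ space $X=\ell^1_\nu$ with geometric weights $\nu_1^{k_1}\nu_2^{k_2}$ (with $\nu_i>1$), so that $X$ is a Banach algebra under the cosine convolution, $\exp(\cdot)$ acts continuously on $X$, and the embedding $\|u\|_\infty\le\|a\|_X$ supplies the sup-norm bound advertised in the theorem.

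Step one: compute a finite truncation $\bar a$ supported on a rectangle $\{0,\dots,N_1\}\times\{0,\dots,N_2\}$ by Newton iteration with FFT-based convolution evaluations. Step two: construct an approximate inverse $A$ of $DF(\bar a)$ by inverting a suitable finite Jacobian on the modes within the rectangle, and using the explicit asymptotic inverse $\mathcal L_k^{-1}$ on the tail modes (where $\mathcal L_k$ grows quartically and is far from zero, given the choice of $c=1.1$ and $L=(50,40)$). Step three: derive the usual bounds $Y_0\ge\|A F(\bar a)\|_X$, $Z_1\ge\|I-A\,DF(\bar a)\|$ (operator norm on $X$), and a uniform Lipschitz-type bound $Z_2(r)\ge\sup_{\|b-\bar a\|_X\le r}\|A[DF(b)-DF(\bar a)]\|$, and verify the radii-polynomial inequality $Y_0+Z_1 r+Z_2(r)r^2<r$ for some $r_0\le 4\cdot 10^{-8}$. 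A contraction-mapping argument on the ball of radius $r_0$ around $\bar a$ then produces the exact solution $\hat u$ with the claimed error bound.

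The main obstacle, as emphasized in the abstract, is that the nonlinearity is exponential while the localized wave has appreciable amplitude, so $\|\bar a\|_X$ is not small and the formal Taylor series $\sum_j \bar a^{*j}/j!$ has a slowly decaying tail. For a computable $Y_0$ I would truncate to a finite Taylor polynomial $P_K(\bar a)=\sum_{j=0}^{K}\bar a^{*j}/j!$, evaluate each convolution power $\bar a^{*j}$ rigorously on a grid large enough (roughly $jN_i$ modes in each direction) to avoid aliasing, and control the remainder via the Banach-algebra tail $\sum_{j>K}\|\bar a\|_X^j/j!$. The aliasing error incurred whenever a convolution must be evaluated on a fixed FFT grid has to be bounded explicitly; this is done using the geometric decay encoded in the weights $\nu_i$, and the bound must be propagated uniformly in $j$ and through all three constants $Y_0,Z_1,Z_2$. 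This rigorous aliasing control is the technically delicate ingredient which prevents one from using an off-the-shelf CAP framework; once it is in place, the radii-polynomial inequality can be checked with interval arithmetic, yielding Theorem \ref{thm:examplesol}.
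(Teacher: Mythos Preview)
The overall framework you describe---Fourier-cosine expansion, weighted $\ell^1_\nu$ Banach algebra, Newton--Kantorovich/radii-polynomial argument, and the block structure of the approximate inverse $A$ (finite Jacobian inverse on low modes, $\lambda_k^{-1}$ on the tail)---matches the paper's setup exactly. The gap is in how you propose to control the exponential nonlinearity.

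You plan to evaluate $e^{\bar a}$ by truncating the Taylor series to $\sum_{j\le K}\bar a^{*j}/j!$ and bounding the remainder by the Banach-algebra tail $\sum_{j>K}\|\bar a\|_X^j/j!$. The paper explicitly analyses this route in Section~\ref{sec:powerseries} and concludes that it \emph{fails} at the parameter value $c=1.1$ of the theorem: the localized wave has large amplitude, $\|\bar a\|_\nu$ is large, and the Banach-algebra tail is a gross overestimate that either does not yield a usable $Y$ bound or forces $K$ (and hence the convolution-power support $\sim K N^{\mathrm{Gal}}$) to be impractically large. Indeed, the paper reports that already the underlying numerics based on the polynomial truncation $F^{(M)}$ diverge for $c\lesssim 1.25$; the Taylor approach is only viable for $c\gtrsim 1.3$.

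The paper's actual mechanism for enclosing $\bar b = G(\bar a)$ is different and constitutes its main technical contribution. Taylor series are avoided altogether: one evaluates $\mathcal G(\bar u)=e^{\bar u}-\bar u-1$ \emph{pointwise} on a fine physical-space grid using an interval-arithmetic FFT, and the aliasing error $|\bar b_n-\bar b_n^{\mathrm{FFT}}|$ is bounded via the discrete Poisson summation formula together with a uniform decay estimate $|\bar b_n|\le \widehat C_{\bar\rho}\,\bar\nu^{-|n|}$ (Lemma~\ref{lemma:boundb}). That decay estimate comes from shifting the Fourier-integral contour into the complex strip where the trigonometric polynomial $\bar u$ is entire, and the constant $\widehat C_{\bar\rho}$ is itself computed rigorously by an interval-FFT quadrature of $|\mathcal G(\bar u(x-i\bar\rho))|$. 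This Cauchy-integral/Poisson-summation device, not Taylor truncation, is what makes the proof go through at $c=1.1$; your proposal is missing it.
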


\begin{figure}[t]
    \centering
    \includegraphics[width=0.8\textwidth]{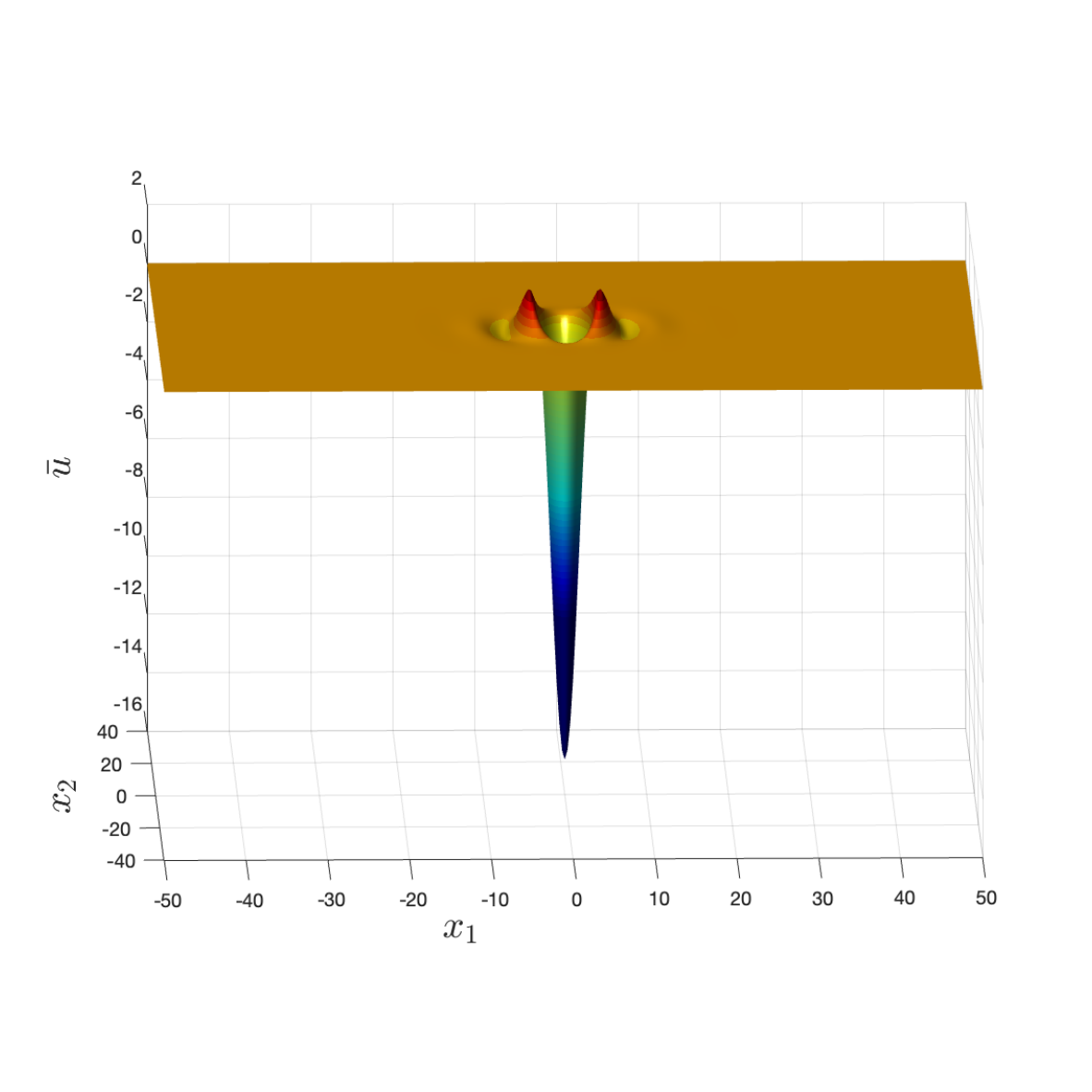}
        \vspace*{-15mm}
    \caption{Plot of the approximate solution $\bar{u}$ to \eqref{eq:SBeq} with boundary conditions \eqref{eq:SBBC}, justified in Theorem \ref{thm:examplesol}. The approximation is extended to the domain $[-L_1,L_1] \times [-L_2,L_2]$ in this plot, i.e., we visualize a single period.}
    \label{fig:examplesol}
\end{figure}

We argue that, by comparing the size of the error bound in Theorem~\ref{thm:examplesol} with (the amplitude of) the graph of $\bar{u}$ in Figure~\ref{fig:examplesol}, we can convincingly claim that the solution $\hat{u}$ describes a traveling pattern that localized in space on a fundamental domain (as well as periodic in the $X_1$-direction). Concerning regularity of the solution, although Theorem~\ref{thm:examplesol} provides an error estimate in terms of the supremum norm only, it follows immediately from the proof method that the solution $\hat{u}$ is real-analytic.

We prove this theorem using a computer-assisted proof (CAP). The  use of CAPs for rigorously verifying numerical simulations of dynamical systems has seen tremendous growth during the last decades. To ensure rigor of the computer calculations that are part of the proof, interval arithmetic is used (see e.g.~\cite{moore}). Examples of existence theorems for solutions to elliptic PDEs using computer-assisted means can be found in \cite{Nakao,Plum}, which mark some of the earliest instances of CAPs for PDEs. The developed techniques were part of the foundation for later studies on CAPs for PDEs described in \cite{Breuer2000,Oishi,Takayasu}. For periodic patterns such as the ones studied in this paper, Fourier transformations are a natural tool. This bring us in a setting where a Newton-Kantorovich type theorem can be applied, which is a standard tool when constructing CAPs for PDEs and beyond (e.g.~\cite{intro_breden2,intro_breden,intro_hungria,intro_reinhardt,intro_berg2,intro_berg3}). One of the Newton-Kantorovich approaches is known as the radii polynomial approach \cite{radii_Day}, which we follow in this paper. We note that a different powerful computer-assisted technique for finding periodic orbits in time is to use rigorous integration of the infinite dimensional flow combined with a topological argument (e.g.~\cite{ArioliKoch, Piotr2004}). More extensive overviews of CAPs for PDEs can be found in the book \cite{BookOverviewCAP} and the survey article \cite{gomez_overview}.

This paper builds on the aforementioned results and introduces several novel aspects. Indeed, as far as we know our approach is the first one to enable proving the existence of periodically localized traveling waves in the two-dimensional suspension bridge equation, confirming numerical indications from simulations that go two decades back~\cite{SB_horak}. 
As already stated, there is a stark contrast with the results in one spatial dimension, where a multitude of techniques is available. 
We note in particular that the computer-assisted techniques for the one-dimensional case as presented in~\cite{BergBreden2018} are not readily extendable to two dimensions. Namely, the method in ~\cite{BergBreden2018} is based on
introducing the variable $v=e^u$, so that $v$ and $u$ and its derivatives satisfy a system of polynomial ODEs, for which powerful CAP techniques are available.
However, in two dimensions there is, to our knowledge, no convenient analogue of this nonlinear change of variables.

To overcome this obstacle, we extend a recent method outlined in \cite{JP_FFT} to higher dimensions. This method is based on the techniques introduced in~\cite{Figueras} for controlling the aliasing error.
We present the method in this paper in two dimensions in such a way that the analogue for dimensions three and higher is also clear, essentially without any additional difficulties. In this paper, we limit ourselves to an exponential nonlinearity, but the method can be applied to other analytic nonlinearities, polynomial or nonpolynomial, as well, and the presentation makes this apparent (see also~\cite{JP_FFT}).
We hasten to add that there are certainly alternatives to our approach. In particular, Example 9.7 in \cite{BookOverviewCAP}, which is based on \cite{Plum_1995}, considers the Gelfand equation, a second-order elliptic PDE with an exponential nonlinearity, on a two dimensional domain. Using computer-assisted means and a finite element approach, smooth solution branches are proven for  that problem. We note that the same book also covers the suspension bridge equation (based on the work in \cite{wave_breuer}), but only in one spatial dimension.

Finally we mention a more technical point. 
Any CAP combines human effort with computations delegated to a computer. In an infinite dimensional setting this usually means that the computations are done on a finite dimensional approximation of the full problem, and the truncation dimension is one of the crucial parameters in such a method. To deal with the resulting ``projection'' or ``truncation'' error, complementary estimates are derived using pen-and-paper analysis.
In our construction, there are multiple steps where we need to choose the dimension of a Galerkin projection determining the amount of modes of a Fourier series that are represented in the computer. 
The projection dimensions should be chosen judiciously and do not need to be the same in each step of the proof.
We highlight the independence of these choices in this paper. The main advantage of this distinction is that by choosing the parameters independently, we create more flexibility, which we can exploit to decrease computation time as well as memory requirements, hence bringing a proof within practical range.
This is especially important in higher dimensions.

The techniques outlined in this paper open the door for future work: they can be used for studying PDEs in higher dimensions with general analytic nonlinear terms (that may be polynomial or not). Furthermore, our results provide the first proof of periodic localized solutions of the two-dimensional suspension bridge equation. These may serve as a starting point for proving existence of solutions that decay to $0$ as $X_1 \to \pm \infty$
by combining our techniques with those developed recently in \cite{Cadiot} and \cite{Cadiot2}. Another follow-up study would be the investigation of (in)stability of the solutions, as was done for the one-dimensional case in \cite{SB_nagatou}.

The rest of the paper is organized as follows. In Section \ref{sec:setup}, we introduce notation that will be convenient later on. Furthermore, we transform our problem to a fixed-point problem which prepares us for applying the Newton-Kantorovich type existence Theorem~\ref{thm:radii}.
In Section \ref{sec:auxestimates}, we describe how to rigorously compute intervals containing the Fourier coefficients corresponding to the nonlinear term of \eqref{eq:SBeq}. These intervals are essential for computing the bounds necessary to apply the existence Theorem~\ref{thm:radii}. Explicit expressions for these bounds are derived in Section~\ref{sec:bounds}. Subsequently, the proof of Theorem~\ref{thm:examplesol} is presented in Section~\ref{sec:results}. We also explore some other solutions, see Figure~\ref{fig:overviewplots}. In the closing section \ref{sec:powerseries}, we outline and discuss an alternative computer-assisted method for proving solutions to the suspension bridge equation based on a power series expansion of the nonlinearity. Finally, in Appendix \ref{app:overviewN} an overview of the different projection dimensions is presented, in Appendix \ref{app:calculationC} we describe how to establish in practice a rigorous computational bound on certain constant that appears in the estimates, and in Appendix \ref{app:poisson} the theorem and short proof of the two-dimensional discrete Poisson summation formula are presented.

\section{Setup and notation}\label{sec:setup}
We start by transforming \eqref{eq:SBeq} with boundary conditions \eqref{eq:SBBC} to a zero-finding problem in Fourier space. To shorten notation, we introduce frequency parameters $q_1,q_2>0$ such that $L_1= \frac{\pi}{q_1}$ and $L_2= \frac{\pi}{q_2}$. For $a_{|n|}=a_{|n_1|,|n_2|}=a_{n_1,n_2}=a_n$, we use the Fourier series expansion
\begin{align}\label{eq:defu2D}
   u(x_1,x_2) =\sum_{n \in \mathbb{Z}^2} a_{|n|} e^{i (n_1q_1x_1+n_2q_2x_2)}. 
\end{align}

\begin{remark}
The absolute values in \eqref{eq:defu2D} appear because of our focus on symmetric solutions. Hence, we can also write $u$ as the cosine series
\begin{align*}
    u= \sum_{n\in \mathbb{N}^2}\multi_n a_n\cos(n_1q_1x_1)\cos(n_2q_2x_2) ,
\end{align*}
where
\begin{align}\label{eq:defgn}
    \multi_n=\multi_{n_1,n_2}:=\begin{cases}
    1 & \text{ for } n_1=n_2=0,\\
      2 & \text{ for } n_1=0,n_2>0,\\
        2 &\text{ for } n_1>0,n_2=0,\\
          4 &\text{ for } n_1>0,n_2>0.
    \end{cases}
\end{align}
\end{remark}

Let $\mathcal{G}$ denote the function describing the nonlinear part of \eqref{eq:SBeq}, i.e., $\mathcal{G}(u)=e^u-u-1$, and let $a=\left(a_n\right)_{n \in \mathbb{Z}^2}$. Furthermore, let $G_n(a) =  \left(G(a)\right)_n=\left(G(a)\right)_{n_1,n_2}=\left(G(a)\right)_{|n_1|,|n_2|}=\left(G(a)\right)_{|n|}$ 
denote the Fourier coefficients of $\mathcal{G}\circ u$. We will not always explicitly mention the dependence of $G_n$ on $a$. The relation between $G_n$ and $a_n$ is defined via
\begin{align*}
   \mathcal{G}\left(\sum_{n \in \mathbb{Z}^2} a_{|n|} e^{i (n_1q_1x_1+n_2q_2x_2)}\right)=\sum_{n \in \mathbb{Z}^2} G_{|n|} e^{i (n_1q_1x_1+n_2q_2x_2)}.
\end{align*}
We will use similar notation for the derivative compositions $\mathcal{G}'\circ u$ and $\mathcal{G}''\circ u$.

To represent the linear part of~\eqref{eq:SBeq} in Fourier space, we introduce
\begin{align}\label{eq:deflambda}
\lambda_n=\lambda_{n_1,n_2}:=\left(n_1^2q_1^2+n_2^2q_2^2 \right)^2-c^2 n_1^{2}q_1^2+1.
\end{align}

\begin{remark}
In this splitting into a linear and nonlinear part we have made the choice to use the linearization of~\eqref{eq:SBeq} around $0$ as the linear part.
We also explored the alternative where the nonlinearity is $\widehat{\mathcal{G}}(u)=e^u-1$ with corresponding $\widehat{\lambda}_n=\left(n_1^2q_1^2+n_2^2q_2^2 \right)^2-c^2 n_1^{2}q_1^2$. This did not significantly influence the applicability of our method.
\end{remark}

Plugging the Fourier series expansion, as given in \eqref{eq:defu2D}, into \eqref{eq:SBeq} leads to the zero-finding problem $F(a)=0$ where
$$
F_{n}(a) :=\lambda_n a_{n}+\left(G(a)\right)_n \qquad \text{for all } n \in \mathbb{N}^2.
$$ 
To keep track of finite truncations and corresponding (Galerkin) projections, for any $N \in \mathbb{N}^2$ we introduce the index sets
\begin{align*}
I_{N}&:=\{n\in \mathbb{Z}^2 : |n_1|\leq N_1,|n_2|\leq N_2\},\\
I^+_{N}&:= I_N \cap \mathbb{N}^2 = \{0\leq n_1\leq N_1,0\leq n_2 \leq N_2\}.
\end{align*} 
To shorten notation, when we write $n\notin I_N^+$ we mean $n \in \mathbb{N}^2 \setminus I^+_N$, and when we write $n\notin I_N$ we mean $n \in \mathbb{Z}^2 \setminus I_N$.

The amount of (nonzero) Fourier coefficients used in the numerical approximation of a solution is denoted by $N^\mathrm{Gal}\in \mathbb{N}^2$. The size of the numerically approximated  part of the Fr\'echet derivative of $F$ is denoted by $N^\mathrm{Jac}\in \mathbb{N}^2$. An overview of the different computational truncation parameters $N$ used in this paper can be found in Appendix~\ref{app:overviewN}. 




\begin{remark}\label{rem:lambda}
Later on we work with the reciprocal of $\lambda_n$. In particular, we will need that $\lambda_n \neq 0$ for $n \notin I^+_{N^\mathrm{Jac}}$. For $c^2 < 2$ (which in practice is always the case) this is automatic, while for $c^2 \geq 2$ an elementary calculation shows that we require $(N^\mathrm{Jac}_1+1)^2 q_1^2 > \frac{c^2}{2}+[\frac{c^4}{4}-1]^{1/2}$ and $(N^\mathrm{Jac}_2+1)^2 q_2^2 > \frac{c^4-4}{4c^2}$.  

Furthermore, to keep the expressions simple, in our estimates we will assume that  $\lambda_n$ increases as $n$ increases for $n \notin I^+_{N^\mathrm{Jac}}$, see Lemma~\ref{lemma:lambda}. Therefore, we choose $N^\mathrm{Jac}$ such that $(N^\mathrm{Jac}_1+1)^2q_1^2 \geq \frac{c}{2}$. In summary, we thus require
\begin{equation}\label{e:Njacmin}
  N^\mathrm{Jac}_1 > \frac{[c^2+(\max\{0,c^4-4\})^{1/2}]^{1/2}}{2^{1/2} q_1}-1 \qquad\text{and}\qquad N^\mathrm{Jac}_2 > \frac{(\max\{0,c^4-4\})^{1/2}}{2cq_2}-1.
\end{equation}
\end{remark}

Let $\chi=\mathbb{R}^{\mathbb{N}^2}$ be the space of Fourier-cosine coefficients. Elements of $\chi$ are usually denoted by $a=\{a_n\}_{n\in \mathbb{N}^2}$.
For $N\in \mathbb{N}^2$, the (Galerkin) projection $\Pi^N$ of $\chi$ onto the  $(N_1 +1)(N_2+1)$-dimensional subspace $\pi^N \chi$ of $\chi$ is given by
\begin{align*}
    \left(\Pi^N a\right)_n = \begin{cases}
        a_n & \text{ if } n \in  I^+_N,\\
        0 & \text{ if } n \notin I^+_N.
    \end{cases}
\end{align*}
We can identify $\Pi^N \chi$ with $\mathbb{R}^{(N_1 +1)(N_2+1)}$ and linear operators on $\Pi^N \chi$ can be represented by matrices of size $(N_1 +1)(N_2+1) \times  (N_1 +1)(N_2+1)$.

Let $\bar{a} \in \pi^{N^\mathrm{Gal}}\chi$ be a numerical approximate zero of $F$, i.e., $\bar{a}_n=0$ for $n \notin I_{N^\mathrm{Gal}}$ and  $\Pi^{N^\mathrm{Gal}}F(\bar{a})\approx 0$.
For the Newton-Kantorovich method we will need an approximation $A$ of the inverse of the Fr\'echet derivative of $F$ at $\bar{a}$. To construct such an approximation, we consider the Jacobian matrix associated to the restricted map
$F^{N^\mathrm{Jac}} := \left.\pi^{N^\mathrm{Jac}}  F \right|_{\pi^{N^\mathrm{Jac}}\chi}$ evaluated at $\pi^{N^\mathrm{Jac}} \bar{a}$. Here $N^\mathrm{Jac}$ is not necessarily equal to $N^\mathrm{Gal}$. We denote by $A^{N^\mathrm{Jac}}$ the 
$( N^\mathrm{Jac}_1+1)  ( N^\mathrm{Jac}_2+1)\times 
( N^\mathrm{Jac}_1+1) ( N^\mathrm{Jac}_2+1)$ matrix obtained by numerically computing an approximate inverse of the Jacobian matrix
$DF^{N^\mathrm{Jac}} (\Pi^{N^\mathrm{Jac}}\bar{a})$.
Using $A^{N^\mathrm{Jac}}$ we can define the operator $A$ as
\begin{equation}\label{eq:defA}
    \left(Av\right)_n:= \begin{cases} (A^{N^\mathrm{Jac}}\Pi^{N^\mathrm{Jac}}v)_n & \text{ if } n \in I^+_{N^\mathrm{Jac}},\\
   \lambda^{-1}_n v_n & \text{ if }  n \notin I^+_{N^\mathrm{Jac}}.
    \end{cases}
\end{equation}
 
\begin{remark}
When evaluating $\Pi^{N^\mathrm{Gal}}F(\bar{a})$ and $DF^{N^\mathrm{Jac}} (\Pi^{N^\mathrm{Jac}}\bar{a})$ we will encounter $\Pi^{N^\mathrm{Gal}} G(\bar{a})$ and $\Pi^{N^\mathrm{Jac}}G'(\bar{a})$.
While we cannot compute these terms exactly, at this stage of the construction that is not a problem, since we only need numerical approximations. In Section~\ref{sec:auxestimates} we delve deeper into obtaining rigorous enclosures of these terms.
\end{remark}

\begin{remark}\label{rem:Njac}
For simplicity the two projection dimensions are usually chosen to be equal: $N^\mathrm{Gal}=N^\mathrm{Jac}$. In that ``natural'' case the Jacobian matrix used for the Newton iterations that produce $\bar{a}$ is the same as the one inverted to obtain $A^{N^\mathrm{Jac}}$. However, when trying to obtain a proof it is often only necessary to enlarge \emph{either} $N^\mathrm{Gal}$ \emph{or} $N^\mathrm{Jac}$ to have a successful CAP. Hence distinguishing between $N^\mathrm{Gal}$ and $N^\mathrm{Jac}$ allows us to minimize the computation time and memory usage, which is why we make the distinction explicit. 
\end{remark}

To turn a subset of the coefficient space $\chi$ into a Banach space, we choose a weighted $\ell^1$ space, where the weights are represented by $\nu=(\nu_1,\nu_2) \geq (1,1)$. Here and in what follows, for $x,\tilde{x}\in \mathbb{R}^2$ we write $x \geq \tilde{x}$ to say that both $x_1 \geq \tilde{x}_1$ and $x_2 \geq \tilde{x}_2$. Similarly, $x > \tilde{x}$ means that both $x_1 > \tilde{x}_1$ and $x_2 > \tilde{x}_2$.

We now denote by $\ell^1_\nu$ the Banach space of elements $a \in \chi$ with finite norm
\begin{align*}
\|a\|_{\nu} := \sum_{n \in \mathbb{N}^2}\left|a_{n}\right| \omega_{n} ,
\end{align*}
with weights
\begin{align*}
 \omega_n :=  \multi_n\nu^{n} := \multi_n \nu_1^{n_1}\nu_2^{n_2} ,
\end{align*}
where we recall that $\multi_n$ is given by \eqref{eq:defgn}.
The weights $\omega_n$ are such that we can also write 
\begin{align*}
    \|a\|_{\nu} = \sum_{n \in \mathbb{Z}^2}\left|a_{|n|}\right| \nu^{|n|},
\end{align*}
where $|n|=(|n_1|,|n_2|)$. We note that for $a \in \ell^1_\nu$ with $\nu \geq 1$ the Fourier series~\eqref{eq:defu2D} converges uniformly, while for $\nu>1$ the series represents a smooth (real analytic) function. 
One of the other benefits of this space is that the norm satisfies the Banach algebra property, i.e.,
\begin{align}\label{eq:banachalgebra}
    \|a\ast \tilde{a}\|_\nu \leq \|a\|_\nu \|\tilde{a}\|_\nu ,
	\qquad\text{for } a,\tilde{a} \in \ell_{\nu}^{1},
\end{align}
where the convolution product is defined as
\begin{align*}
    (a\ast \tilde{a})_n=\sum_{m\in \mathbb{Z}^2}a_{|m|}\tilde{a}_{|n-m|}
	\qquad\text{for } n \in \mathbb{N}^2.
\end{align*} 
Finally, let $I$ denote the identity map (its domain should be clear from context).
We are now ready to state the existence theorem used for obtaining the CAPs in this paper, which is based on establishing that $a \mapsto a - AF(a)$ is a contraction on a ball of appropriately chosen radius centered at $\bar{a}$.

\begin{theorem}[e.g.~\cite{Breden_gPC}]\label{thm:radii}
  Let $F: X \rightarrow \hat{X}$ be a Fr\'echet differentiable map between Banach spaces $X$ and $\hat{X}$. Furthermore, let $A \in B(\hat{X}, X)$ be injective, $\bar{a}\in X$ and $r^\ast \in (0,\infty]$. Assume that $Y,Z, W \in \mathbb{R}^+$ satisfy the bounds
\begin{align}
\|A F(\bar{a})\|_{X} & \leq Y, \label{eq:Ybound}\\
\left\|I-A DF(\bar{a})\right\|_{B(X)} & \leq Z, \label{eq:Zbound}\\
\|A[DF(\bar{a}+w)-DF(\bar{a})]\|_{B(X)}&\leq W\|w\|_X \quad \text{ for all } \|w\|_X\leq r^\ast.\label{eq:Wbound}
\end{align}
If $Y,Z$ and $W$ satisfy
\begin{align*}
    Z&<1;\\
    2YW&<(1-Z)^2,
\end{align*}
then, for any $r$ satisfying
\begin{align*}
    \frac{1-Z-\sqrt{(1-Z)^2-2YW}}{W}\leq r < \min{\left(\frac{1-Z}{W},r^\ast\right)}
\end{align*}
there exists a unique $\hat{a} \in X$ satisfying $F(\hat{a})=0$ such that 
$\|\hat{a}-\bar{a}\|_X \leq r$.
\end{theorem}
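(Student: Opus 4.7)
The plan is to apply the Banach fixed-point theorem to the Newton-like operator $T:X\to X$ defined by $T(a):=a-AF(a)$ on the closed ball $\overline{B_r(\bar a)}:=\{a\in X:\|a-\bar a\|_X\le r\}$. Fixed points of $T$ are zeros of $AF$, and since $A$ is injective these coincide with zeros of $F$; so it suffices to prove that $T$ has a unique fixed point in $\overline{B_r(\bar a)}$.

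First I would verify that $T$ maps the ball into itself. For $w\in X$ with $\|w\|_X\le r\le r^\ast$, the fundamental theorem of calculus gives
\begin{align*}
T(\bar a+w)-\bar a
&= w-AF(\bar a)-A\!\int_0^1 DF(\bar a+tw)\,w\,dt\\
&= -AF(\bar a)+(I-A\,DF(\bar a))w-A\!\int_0^1\!\bigl[DF(\bar a+tw)-DF(\bar a)\bigr]w\,dt.
\end{align*}
Using \eqref{eq:Ybound}, \eqref{eq:Zbound}, and \eqref{eq:Wbound} with the estimate $\|tw\|_X\le tr\le r^\ast$, the three terms are bounded by $Y$, $Zr$, and $\int_0^1 Wtr\cdot r\,dt=\tfrac{W}{2}r^2$ respectively. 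Hence
\begin{equation*}
\|T(\bar a+w)-\bar a\|_X\le p(r):=\tfrac{W}{2}r^2-(1-Z)r+Y+r.
\end{equation*}
The self-mapping property $\|T(\bar a+w)-\bar a\|_X\le r$ is therefore equivalent to the \emph{radii polynomial inequality} $\tfrac{W}{2}r^2-(1-Z)r+Y\le 0$. Under the hypotheses $Z<1$ and $2YW<(1-Z)^2$, this quadratic in $r$ has two positive real roots
$r_\pm=\frac{(1-Z)\pm\sqrt{(1-Z)^2-2YW}}{W}$, and the inequality holds precisely on $[r_-,r_+]$, which is exactly the lower endpoint appearing in the statement.

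Next I would establish the contraction property. For $w_1,w_2\in X$ with $\|w_i\|_X\le r$, writing $v_s:=w_2+s(w_1-w_2)$ and noting that $\|v_s\|_X\le r\le r^\ast$ by convexity, the same Taylor argument yields
\begin{equation*}
T(\bar a+w_1)-T(\bar a+w_2)=(I-A\,DF(\bar a))(w_1-w_2)-A\!\int_0^1\!\bigl[DF(\bar a+v_s)-DF(\bar a)\bigr](w_1-w_2)\,ds.
\end{equation*}
Applying \eqref{eq:Zbound} and \eqref{eq:Wbound} gives $\|T(\bar a+w_1)-T(\bar a+w_2)\|_X\le(Z+Wr)\|w_1-w_2\|_X$, which is a strict contraction precisely when $r<(1-Z)/W$; this is the other part of the upper bound on $r$ in the statement, and the constraint $r<r^\ast$ ensures the applicability of \eqref{eq:Wbound} throughout. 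The upper bound $r_+$ of the self-mapping interval is automatically at least $(1-Z)/W$, so any admissible $r$ lies in $[r_-,r_+]$ and the two requirements are compatible.

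Finally I would assemble the conclusion: for any $r$ in the stated range, $T$ is a strict contraction from the complete metric space $\overline{B_r(\bar a)}$ to itself, so the Banach fixed-point theorem yields a unique fixed point $\hat a\in\overline{B_r(\bar a)}$, and injectivity of $A$ upgrades this to $F(\hat a)=0$. The only delicate point in the argument is the bookkeeping linking the quadratic $\tfrac{W}{2}r^2-(1-Z)r+Y$ to the explicit interval in the statement and verifying that the self-mapping and contraction constraints on $r$ are simultaneously satisfiable under the hypotheses $Z<1$ and $2YW<(1-Z)^2$; everything else is a direct application of standard Newton-Kantorovich techniques.
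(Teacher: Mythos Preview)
Your proof is correct and is precisely the standard Newton--Kantorovich/radii-polynomial argument underlying this type of theorem. Note that the paper does not actually include a proof of Theorem~\ref{thm:radii}; it is stated as a known result with a citation, so there is no ``paper's own proof'' to compare against. One minor technical remark: the integral (fundamental theorem of calculus) form you use to extract the factor $\tfrac12$ in the self-mapping estimate requires the Bochner integral $\int_0^1 A\,DF(\bar a+tw)\,w\,dt$ to make sense, which is justified here because hypothesis~\eqref{eq:Wbound} forces $w\mapsto A\,DF(\bar a+w)$ to be Lipschitz (hence continuous) on the ball, so $AF$ is $C^1$ there and the integral representation is valid.
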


When we apply this theorem in our setting we take $X=\ell^1_\nu$ and we can for example select $\hat{X}= \{a \in \chi : \|a\|_{\hat{X}} = \sum_{n\in\mathbb{N}^2} (|\lambda_n|+1)^{-1} |a_n| \omega_n <\infty \}$, so that $A: \hat{X} \rightarrow X$ is a bounded operator. Concerning injectivity of $A$, it follows from~\eqref{eq:defA} that it suffices to check that the matrix $A^{N^\mathrm{Jac}}$ is invertible, which is not difficult to verify (for our norm it is implied by the requirement that $\|I-ADF(\bar{a}) \|_{B(X)}<1$).

\section{Determining the Fourier coefficients of the nonlinear term}\label{sec:auxestimates}

Assuming that we know $\bar{a}$, which has only finitely many nonzero terms and represents a numerical approximate zero of $F$, we want to calculate corresponding $\bar{b}=G(\bar{a})$, the Fourier coefficients of $\mathcal{G} \circ \bar{u}$ where $\bar{u}$ is the function with Fourier coefficients $\bar{a}$. However, we cannot calculate $\bar{b}$ exactly. One reason for this is that $\bar{b}$ has infinitely many non-vanishing Fourier coefficients, since $\mathcal{G}$ is not polynomial. But the situation is even worse, since for nonpolynomial $\mathcal{G}$ not even finitely many of the coefficients of $\bar{b}$ can be calculated exactly. Hence, in this section we perform estimates to obtain intervals containing the values of the Fourier coefficients. To do so, we follow~\cite{Figueras} and extend the one-dimensional method outlined in \cite{JP_FFT} to higher dimensions. To be specific, we focus on \emph{two} dimensions, but our notation is chosen such that the generalization to dimensions three and higher is straightforward. We present the estimates for general Fourier series instead of cosine series. However, in the implementation in the code we do use the symmetries in our problem and hence we work with cosine series there. We also note that the implementation uses interval arithmetic to rigorously absorb errors due to rounding in floating point operations.

The enclosure we obtain for $\bar{b}$ is described in two steps. First, in Section~\ref{sec:generalbound} we derive a relatively rough bound on $|\bar{b}_n|$ that holds for all $n \in \mathbb{Z}^2$. While a bit rough, the bound decays exponentially in $n$, and in practice we use this bound for large $n$. 
Subsequently, we construct an improved enclosure of $\bar{b}_n$ for small $n \in \mathbb{Z}^2$ in Section~\ref{sec:alias}. Here the boundary between small and large $n$ is determined by a computational parameter $N^{\mathrm{Alias}} \in \mathbb{N}^2$.  We choose $N^{\mathrm{Alias}}\geq N^{\mathrm{Jac}}$ for implementation purposes. Combining the two results, we arrive at a way of rigorously computing intervals containing $\bar{b}_n$.

\subsection{Enclosure for the tail}\label{sec:generalbound}

Since $\bar{a} \in \chi$ has only finitely many nonvanishing terms, 
the corresponding function
$\bar{u}(z) = \sum_{n\in \mathbb{Z}^2} \bar{a}_n e^{i(n_1 z_1+n_2 z_2)}$ is analytic on $\mathbb{C}^2$. 
Note that this function is $2\pi$-periodic in the real parts of $z_1$ and $z_2$, hence we have absorbed the frequencies $(q_1,q_2)$ in the independent variables $z=(z_1,z_2)$.
For $\bar{\rho} \in \mathbb{R}^2$ we define
\begin{align}\label{eq:defCrho}
C_{\bar{\rho}}:=\frac{1}{(2 \pi)^2} \int_{-\pi}^{\pi} \int_{-\pi}^{\pi}|\mathcal{G}(\bar{u}(x_1-i \bar{\rho}_1,x_2-i \bar{\rho}_2))| d x_1dx_2 .
\end{align}  
Furthermore, for $\bar{\rho} > 0$ we set
\begin{align}\label{eq:defC}
\widehat{C}_{\bar{\rho}} := \max \left\{C_{\bar{\rho}_1,\bar{\rho}_2}, C_{-\bar{\rho}_1,\bar{\rho}_2}, C_{\bar{\rho}_1,-\bar{\rho}_2}, C_{-\bar{\rho}_1,-\bar{\rho}_2}\right\}.
\end{align}

\begin{remark}
The constant $\widehat{C}_{\bar{\rho}}$ defined in \eqref{eq:defC} cannot be calculated exactly. However, we can rigorously and efficiently estimate it using the discrete Fourier transform and interval arithmetic. This procedure is described in Appendix \ref{app:calculationC}.
\end{remark}

The following lemma controls the decay rate of the Fourier coefficients of $\mathcal{G}(\bar{u})$. The result holds more generally for any function $\bar{u}$ which is analytic on some strip 
\[
S_\rho := \{ z \in \mathbb{C}^2 : \mathrm{Im}(z_1) < \rho_1 , \mathrm{Im}(z_2) < \rho_2 \}
\]
of width $\rho>\bar{\rho}$ and any nonlinearity $\mathcal{G}$ which is analytic on the range of $\left.\bar{u}\right|_{S_\rho}$.

\begin{lemma}\label{lemma:boundb} 
Let $\mathbb{R}^2 \ni \bar{\rho} >0$.
 Let $\{\bar{b}_n\}_{n\in \mathbb{Z}^2}$ denote the Fourier coefficients of $\mathcal{G}\circ \bar{u}$ and 
let $\bar{\nu} = (e^{\bar{\rho}_1}, e^{\bar{\rho}_2})$. Then     \begin{align*}
\left|\bar{b}_{n}\right| \leq \frac{\widehat{C}_{\bar{\rho}}}{\bar{\nu}^{|n|}}, \quad \text { for all } n \in \mathbb{Z}^2.
\end{align*}
\end{lemma}

\begin{proof}
    We present the details for $n \geq 0$,  which corresponds to $C_{\bar{\rho}}$. The other cases ($n < 0$ and $n_1< 0,n_2 \geq 0$ and $n_1\geq 0,n_2 < 0$) are dealt with in an equivalent way. Choose any $\rho>\bar{\rho}$. As $\bar{u}$ is analytic on $\mathbb{C}^2$, both $\bar{u}$ and $\mathcal{G} \circ \bar{u}$ are analytic on the strip $S_\rho$. Hence, for the closed contours $\Gamma_j$, $j=1,2$ depicted in Figure~\ref{fig:AnalyticStrip} we have $\iint_{\Gamma_1 \times \Gamma_2} \mathcal{G}(\bar{u}(z)) dz=0$ by Cauchy's Integral Theorem.
\begin{figure}[t]
    \centering
    \includegraphics[width=0.65\textwidth]{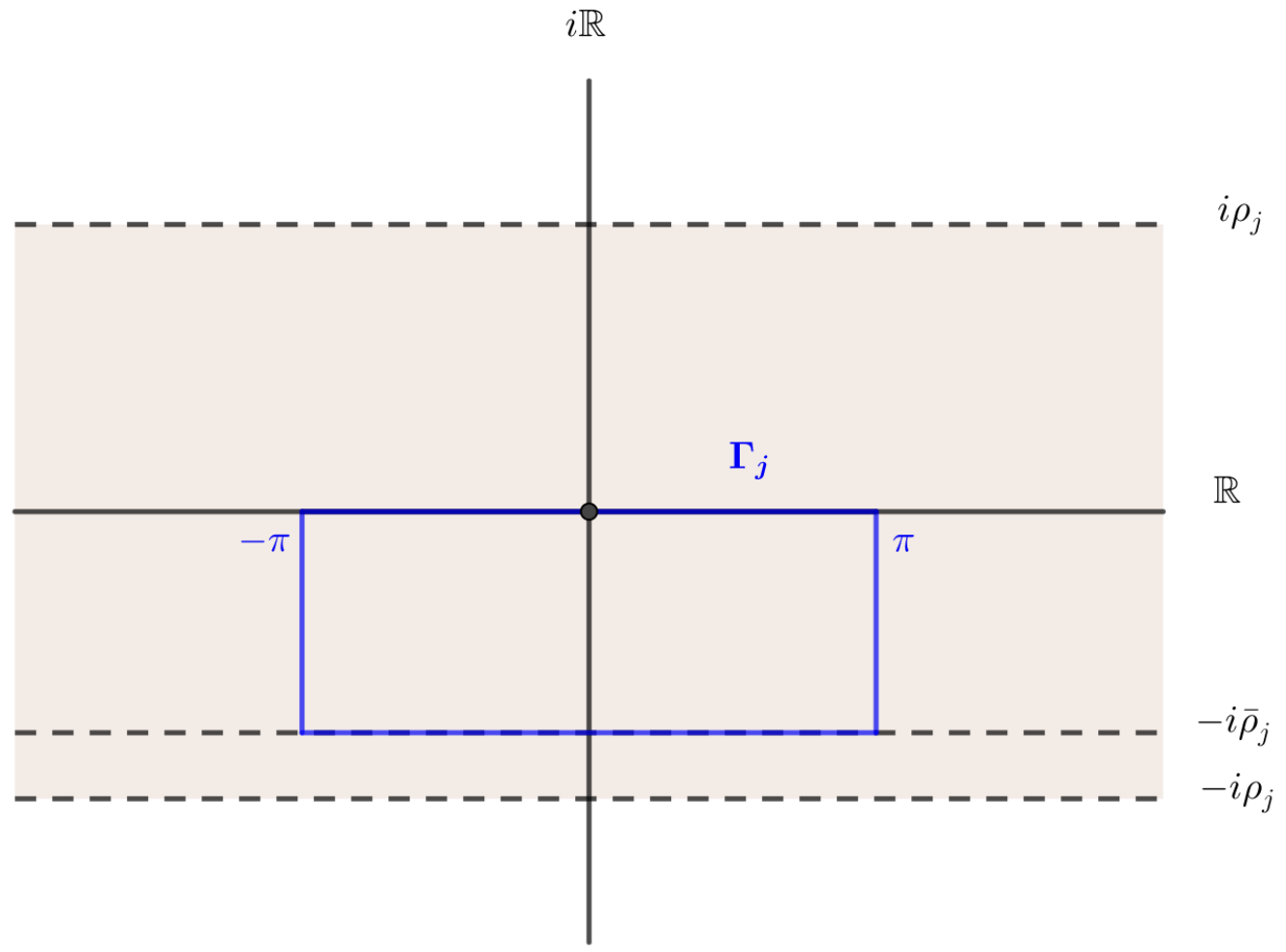}
    \caption{The placement of the strip and the contours used in the proof of Lemma~\ref{lemma:boundb}.}
    \label{fig:AnalyticStrip}
\end{figure}
By using periodicity of $\bar{u}$ in $\mathrm{Re}(z_j)$, we rewrite the formula for the Fourier coefficients $\bar{b}_n$ as
$$
\begin{aligned}
\bar{b}_{n} & =\frac{1}{(2 \pi)^2} \int_{-\pi}^{\pi} \int_{-\pi}^{\pi}\mathcal{G}(\bar{u}(x_1,x_2)) e^{-i (n_1 x_1+n_2 x_2)} d x_1dx_2 \\
& =\frac{1}{(2\pi)^2} \int_{-\pi}^{\pi} \int_{-\pi}^{\pi} \mathcal{G}(\bar{u}(x_1-i \bar{\rho}_1,x_2-i \bar{\rho}_2)) e^{-i (n_1 x_1+n_2 x_2)-(n_1 \bar{\rho}_1+n_2 \bar{\rho}_2)} d x_1dx_2 \\
& =\frac{1}{(2 \pi)^2} e^{-(n_1\bar{\rho}_1+n_2\bar{\rho}_2)} \int_{-\pi}^{\pi} \int_{-\pi}^{\pi} \mathcal{G}(\bar{u}(x_1-i \bar{\rho}_1,x_2-i \bar{\rho}_2)) e^{-i (n_1 x_1+n_2 x_2)} d x_1dx_2.
\end{aligned}
$$ 
Consequently, we obtain, for any $n \geq 0$,
$$
\begin{aligned}
\left|\bar{b}_{n}\right| & \leq \frac{1}{(2 \pi)^2 e^{n_1\bar{\rho}_1+n_2\bar{\rho}_2}} \int_{-\pi}^{\pi} \int_{-\pi}^{\pi}\left|\mathcal{G}(\bar{u}(x_1-i \bar{\rho}_1,x_2-i \bar{\rho}_2))\right|\left|e^{-i (n_1 x_1+n_2 x_2)}\right| d x_1dx_2 \\
& = \frac{1}{\bar{\nu}_1^{n_1}\bar{\nu}_2^{n_2}}\frac{1}{(2 \pi)^2} \int_{-\pi}^{\pi} \int_{-\pi}^{\pi}|\mathcal{G}(\bar{u}(x_1-i \bar{\rho}_1,x_2-i \bar{\rho}_2))| d x_1dx_2= \frac{C_{\bar{\rho}}}{\bar{\nu}^{n}}.
\end{aligned}
$$
Adjusting the contour to the upper half plane where appropriate, we gather the constants for the other cases, e.g.
\begin{align*}
\left|\bar{b}_{n}\right| \leq \frac{C_{-\bar{\rho}_1,\bar{\rho}_2}}{\bar{\nu}^{|n|}} \quad\text{ for all } n_1<0,n_2\geq0.
\end{align*}
Combining the different cases and using the definition of $\widehat{C}_{\bar{\rho}}$ in \eqref{eq:defC} proves the result.
\end{proof}

\begin{remark}\label{remark:Crhoforcosines}
In the case of cosine series we have $\bar{b}_n=\bar{b}_{|n|}$. We are thus only interested in $n \in \mathbb{N}^2$ and it easily follows, for example from the proof above, that $\widehat{C}_{\bar{\rho}}=C_{\bar{\rho}}$ for $\bar{\rho} > 0$.
\end{remark}

\subsection{Aliasing error}\label{sec:alias}

In this section we aim to find tight enclosures on $\bar{b}_n$ for $n\in I_{N^\mathrm{Alias}}$. The choice of the computational parameter $N^\mathrm{Alias}$ may depend on the purpose of our calculations, but will in our case be guided by the bounds we need to compute in Section~\ref{sec:bounds}. We thus fix $N^\mathrm{Alias}$ and set out to improve the general bound from Lemma~\ref{lemma:boundb} for $n \in I_{N^\mathrm{Alias}}$. Namely, we use the \emph{discrete} Fourier transform to calculate a numerical approximation of $\bar{b}$, which we call $\bar{b}^\mathrm{FFT}$, and bound the resulting \emph{aliasing} error via the Poisson summation formula, as explained below.

In order to calculate $\bar{b}^\mathrm{FFT}$, we introduce $N^\mathrm{FFT}$ indicating how many Fourier modes are considered when doing discrete (inverse) Fourier transformations. For algorithmic reasons, we choose $N_1^\mathrm{FFT}$ and $N_2^\mathrm{FFT}$ to be powers of $2$, but for the analysis that is irrelevant. In order to obtain a small aliasing error for $n \in I_{N^\mathrm{Alias}}$, 
we require that $N^\mathrm{FFT} \gg N^\mathrm{Alias}$, see also Remark~\ref{rem:sizeNFFT}.

We introduce the index set
\begin{align*}
J_{N^\mathrm{FFT}} := \{-N_1^{\mathrm{FFT}}\leq n_1 \leq N_1^{\mathrm{FFT}}-1,-N_2^{\mathrm{FFT}}\leq n_2 \leq N_2^{\mathrm{FFT}}-1\},
\end{align*}
and define the uniform mesh  
(of the square $[-\pi,\pi] \times [-\pi,\pi]$)
\begin{align}\label{eq:mesh}
    x_{1,k_1}=\frac{\pi k_1}{N_1^{\mathrm{FFT}}}, \quad   x_{2,k_2}=\frac{\pi k_2}{N_2^{\mathrm{FFT}}} \qquad\qquad \text{ for } k \in J_{N^\mathrm{FFT}}.
\end{align}
Then $\bar{b}^\mathrm{FFT}$ is defined via
\begin{align}\label{eq:defbbar1}
\bar{b}^\mathrm{FFT}_{n} :=\frac{1}{4N_1^{\mathrm{FFT}}N_2^{\mathrm{FFT}}} \sum_{k \in J_{N^\mathrm{FFT}}}\mathcal{G}\left(\bar{u}\left(x_{1,k_1},x_{2,k_2}\right)\right) e^{-i (n_1 x_{1,k_1}+n_2 x_{2,k_2})}.
\end{align}
When $\bar{u}$ has only finitely many nonvanishing Fourier coefficients, then the right-hand side of~\eqref{eq:defbbar1} can be evaluated rigorously via interval arithmetic for all $n \in J_{N^\mathrm{FFT}}$.
Before stating the bound on the aliasing error, 
we introduce 
\begin{align*}
    f_\mathrm{geo}(\xi,N) :=\;&
 \frac{(1+\xi_1)(1+\xi_2) - (1+\xi_1-2\xi_1^{N_1+1})
      (1+\xi_2-2\xi_2^{N_1+1})}{(1-\xi_1)(1-\xi_2)}
    \\[1mm]
    =\;& \frac{
    2\xi_1^{N_1+1}\left(1+\xi_2\right)+
    2\xi_2^{N_2+1}\left(1+\xi_1\right)
    -4\xi_1^{N_1+1}\xi_2^{N_2+1}
    }{\left(1-\xi_1\right)\left(1-\xi_2\right)},
\end{align*} 
and prove an auxiliary lemma.
\begin{lemma}\label{lemma:geometric}
 Let $N\in\mathbb{N}^2$ and let $\xi=(\xi_1,\xi_2)$ be such that $\xi_1,\xi_2 \in [0,1)$.
Then
\begin{align}\label{eq:geom}
   \sum_{n\notin I^+_N}\multi_n \xi^{|n|}= 
   \sum_{n\notin I_N}\xi^{|n|}=  
   f_\mathrm{geo}(\xi,N) . 
\end{align}
\end{lemma}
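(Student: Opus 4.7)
My plan is to reduce the lemma to elementary one-dimensional geometric series computations, organized in two steps corresponding to the two claimed equalities.

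For the first equality, I would observe that the map $\mathbb{Z}^2 \to \mathbb{N}^2$, $n \mapsto |n|$, is exactly $\gamma_{|n|}$-to-one: the preimage of $(0,0)$ is a singleton, the preimage of $(0,m_2)$ or $(m_1,0)$ with positive coordinates has two elements (sign flips in the nonzero slot), and the preimage of $(m_1,m_2)$ with both coordinates positive has four elements. Since this bijective correspondence sends $I_N$ to $I_N^+$, it also sends $\mathbb{Z}^2 \setminus I_N$ to $\mathbb{N}^2 \setminus I_N^+$, and since $\xi^{|n|}$ depends only on $|n|$, the reindexing yields $\sum_{n \notin I_N}\xi^{|n|} = \sum_{n \notin I_N^+}\gamma_n \xi^{|n|}$.

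For the second equality, I would write
\begin{align*}
\sum_{n \notin I_N} \xi^{|n|} = \sum_{n\in\mathbb{Z}^2} \xi^{|n|} - \sum_{n\in I_N} \xi^{|n|},
\end{align*}
and factor each of the two sums on the right as a product of one-dimensional sums over $n_j \in \mathbb{Z}$ and $|n_j|\le N_j$ respectively. Using $\sum_{m\in\mathbb{Z}}\xi_j^{|m|} = \frac{1+\xi_j}{1-\xi_j}$ and the truncated version $\sum_{|m|\le N_j}\xi_j^{|m|} = \frac{1+\xi_j - 2\xi_j^{N_j+1}}{1-\xi_j}$ (both obtained from standard geometric series and requiring $\xi_j \in [0,1)$ for convergence of the untruncated sum), substitution gives precisely the first displayed form of $f_{\mathrm{geo}}(\xi,N)$. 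Expanding the difference of products in the numerator then collapses the $(1+\xi_1)(1+\xi_2)$ terms and produces the second displayed form.

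Neither step presents a genuine obstacle; the main point of care is the bookkeeping of the multiplicity $\gamma_n$ when collapsing signed indices to absolute values, and ensuring the truncated and untruncated geometric-series formulas are stitched together correctly. The hypothesis $\xi_j \in [0,1)$ is used exclusively to guarantee absolute convergence of the untruncated series, which justifies the subtraction of two convergent sums.
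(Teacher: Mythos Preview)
Your proposal is correct and follows essentially the same route as the paper: the first equality is obtained by collapsing signed indices via the multiplicity $\gamma_n$, and the second by writing the tail as the full sum over $\mathbb{Z}^2$ minus the sum over $I_N$, factoring each into one-dimensional geometric sums, and subtracting. Your write-up is slightly more explicit about why $\gamma_n$ counts preimages under $n\mapsto|n|$, but the argument is the same.
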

\begin{proof}
The first identity in~\eqref{eq:geom} follows directly from the definition of $\multi_n$ in~\eqref{eq:defgn}. To prove the second identity, we use the splitting
\begin{align}\label{eq:sumsplit}
    \sum_{n\notin I_N }\xi^{|n|}=\sum_{n \in \mathbb{Z}^2}\xi^{|n|}-\sum_{n\in I_N}\xi^{|n|}.
\end{align}
We start by rewriting the second summation as
\begin{align*}
    \sum_{n\in I_N}\xi^{|n|}
    &=\sum_{n_1=-N_1}^{N_1}\xi_1^{|n_1|}\sum_{n_2=-N_2}^{N_2}\xi_2^{|n_2|}=\left(2\sum_{n_1=0}^{N_1}\xi_1^{n_1}-1\right)\left(2\sum_{n_2=0}^{N_2}\xi_2^{n_2}-1\right)\\
    &=\left(\frac{1-2\xi_1^{N_1+1}+\xi_1}{1-\xi_1}\right)\left(\frac{1-2\xi_2^{N_2+1}+\xi_2}{1-\xi_2}\right).
\end{align*}
Using this and recalling that $(\xi_1,\xi_2)<(1,1)$, we can write the first summation in \eqref{eq:sumsplit} as
\begin{align*}
    \sum_{n\in \mathbb{Z}^2}\xi^{|n|}&=\lim_{(N_1,N_2)\to (\infty,\infty)}\sum_{n\in I_N}\xi^{|n|}=\left(\frac{1+\xi_1}{1-\xi_1}\right)\left(\frac{1+\xi_2}{1-\xi_2}\right).
\end{align*}
Subtracting the second term from the first one and rewriting yields the assertion.
\end{proof}

We now present an estimate for the aliasing error, along the lines of~\cite{Figueras}.
\begin{lemma}\label{lemma:alias}
Let $\bar{\rho}=(\bar{\rho}_1,\bar{\rho}_2)>0$ and let  $\widehat{C}_{\bar{\rho}}$ be defined in~\eqref{eq:defC} and $\bar{\nu}_j= e^{\bar{\rho}_j}$ for $j=1,2$. Then
  \begin{align}\label{eq:aliasingerror}
\left|\bar{b}_n-\bar{b}^\mathrm{FFT}_n\right|\leq 
\widehat{C}_{\bar{\rho}} \, \bar{\nu}^{|n|} \, 
f_\mathrm{geo}\bigl(\bar{\nu}^{-2N^{\mathrm{FFT}}},0\bigr)  ,
\qquad \text { for } n \in J_{N^\mathrm{FFT}}.
\end{align}
\end{lemma}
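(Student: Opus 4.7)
The plan is to apply the two-dimensional discrete Poisson summation formula from Appendix~\ref{app:poisson} to the continuous function $\mathcal{G}\circ\bar{u}$. This identity rewrites the trapezoidal quadrature on the mesh~\eqref{eq:mesh} as an aliased sum of the exact Fourier coefficients,
\[
\bar{b}^{\mathrm{FFT}}_n \;=\; \sum_{k \in \mathbb{Z}^2} \bar{b}_{\,n + 2 N^{\mathrm{FFT}}\!\cdot k}, \qquad 2 N^{\mathrm{FFT}}\!\cdot k \,:=\, (2 N_1^{\mathrm{FFT}} k_1,\, 2 N_2^{\mathrm{FFT}} k_2).
\]
Isolating the $k=(0,0)$ term yields the aliasing error
$\bar{b}^{\mathrm{FFT}}_n - \bar{b}_n = \sum_{k\neq 0} \bar{b}_{\,n + 2N^{\mathrm{FFT}}\!\cdot k}$, and inserting the exponential decay $|\bar{b}_m| \leq \widehat{C}_{\bar{\rho}} \bar{\nu}^{-|m|}$ from Lemma~\ref{lemma:boundb} reduces the task to bounding $\sum_{k \neq 0} \bar{\nu}^{-|n + 2N^{\mathrm{FFT}}\!\cdot k|}$.

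The crucial analytical input is now the hypothesis $n\in J_{N^{\mathrm{FFT}}}$, i.e.\ $-N_j^{\mathrm{FFT}} \leq n_j \leq N_j^{\mathrm{FFT}}-1$: for any $k_j \neq 0$ the shifted index $n_j + 2N_j^{\mathrm{FFT}} k_j$ has the same sign as $k_j$, so the absolute value can be stripped off cleanly. Because the factor $\bar{\nu}^{-|n+2N^{\mathrm{FFT}}\!\cdot k|}$ tensorizes in $k_1,k_2$, the sum factors into a product of one-dimensional geometric series in the variable $\xi_j := \bar{\nu}_j^{-2N_j^{\mathrm{FFT}}}\in (0,1)$, giving
\[
\sum_{k_j \in \mathbb{Z}} \bar{\nu}_j^{-|n_j + 2N_j^{\mathrm{FFT}} k_j|} \;=\; \bar{\nu}_j^{-|n_j|} \,+\, (\bar{\nu}_j^{n_j}+\bar{\nu}_j^{-n_j})\,\frac{\xi_j}{1-\xi_j} \;\leq\; \bar{\nu}_j^{-|n_j|} + 2\bar{\nu}_j^{|n_j|}\frac{\xi_j}{1-\xi_j}.
\]

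The final step is to take the product over $j=1,2$, subtract the $k=(0,0)$ contribution $\bar{\nu}^{-|n|}$, and absorb the remaining ``mixed'' terms via $\bar{\nu}_j^{-|n_j|} \leq \bar{\nu}_j^{|n_j|}$ (valid since $\bar{\nu}_j \geq 1$). What is left is $\widehat{C}_{\bar{\rho}}\,\bar{\nu}^{|n|}$ multiplied by
\[
\frac{2\xi_1}{1-\xi_1} + \frac{2\xi_2}{1-\xi_2} + \frac{4\xi_1\xi_2}{(1-\xi_1)(1-\xi_2)} \;=\; \frac{2\xi_1+2\xi_2}{(1-\xi_1)(1-\xi_2)} \;=\; f_{\mathrm{geo}}(\xi,0),
\]
where the last equality is the specialization $N=(0,0)$ of Lemma~\ref{lemma:geometric} (and is trivially verified by expansion). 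I expect the main obstacle to be precisely the bookkeeping of the absolute values in the aliased indices; once the sign argument above pins those down, the rest is a clean double geometric sum. It is also this step that explains the perhaps counterintuitive growth factor $\bar{\nu}^{|n|}$ in the final estimate (as opposed to the decay factor $\bar{\nu}^{-|n|}$ of Lemma~\ref{lemma:boundb}): this factor arises because the nearest aliased partner of $n$ has index of size $2N_j^{\mathrm{FFT}} - |n_j|$ rather than $2N_j^{\mathrm{FFT}} + |n_j|$.
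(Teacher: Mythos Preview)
Your argument is correct. The key steps---the Poisson summation formula, the decay bound from Lemma~\ref{lemma:boundb}, the sign analysis for $n\in J_{N^{\mathrm{FFT}}}$, the factorization into one-dimensional geometric series, and the algebraic identification with $f_{\mathrm{geo}}(\xi,0)$---all check out.

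The paper's own proof is shorter because it bypasses the exact evaluation of the one-dimensional sums. Instead of your sign argument, the paper applies the componentwise reverse triangle inequality $|n_i + 2j_i N_i^{\mathrm{FFT}}| \geq 2|j_i|N_i^{\mathrm{FFT}} - |n_i|$ directly, obtaining
\[
\bar{\nu}^{-|n+2jN^{\mathrm{FFT}}|} \leq \bar{\nu}^{|n|}\,(\bar{\nu}^{-2N^{\mathrm{FFT}}})^{|j|}
\]
for every $j\in\mathbb{Z}^2$, and then invokes Lemma~\ref{lemma:geometric} with $N=0$ in one line. This avoids both the tensorization step and the subsequent bound $\bar{\nu}_j^{-|n_j|}\leq\bar{\nu}_j^{|n_j|}$ on the mixed terms; in effect, the paper absorbs that loss up front via the triangle inequality rather than at the end. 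Your route is a bit more explicit about \emph{where} the growth factor $\bar{\nu}^{|n|}$ originates (the nearest alias), at the cost of additional bookkeeping; the paper's route is cleaner but less transparent on that point. Both yield the identical final bound.
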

\begin{proof}
The two-dimensional discrete Poisson summation formula (see Appendix \ref{app:poisson}) implies that
\begin{align*}
\bar{b}^\mathrm{FFT}_{n}=\bar{b}_{n}+\sum_{\substack{j\in\mathbb{Z}^2 \\ (j_1,j_2)\neq (0,0)}}\bar{b}_{n+2j N^{\mathrm{FFT}}}, \qquad \text { for } n \in J_{N^\mathrm{FFT}}.
\end{align*}
For $n \in J_{N^\mathrm{FFT}}$, we then obtain
\begin{align*}
\left|\bar{b}_n-\bar{b}^\mathrm{FFT}_n\right| & \leq \sum_{\substack{j\in\mathbb{Z}^2 \\ (j_1,j_2)\neq (0,0)}}\left|\bar{b}_{n+2j N^{\mathrm{FFT}}}\right| \\
& \leq \widehat{C}_{\bar{\rho}} \sum_{\substack{j\in\mathbb{Z}^2 \\ (j_1,j_2)\neq (0,0)}}\frac{1}{\bar{\nu}^{|2j N^{\mathrm{FFT}}|-|n|}} \qquad \text{(Lemma \ref{lemma:boundb})} \\
&=\widehat{C}_{\bar{\rho}} \bar{\nu}^{|n|}\sum_{\substack{j\in\mathbb{Z}^2 \\ j \notin I_0}}\bigl(\bar{\nu}^{-2N^\mathrm{FFT}}\bigr)^{|j|}\\
&= \widehat{C}_{\bar{\rho}} \bar{\nu}^{|n|} 
f_\mathrm{geo}\bigl(\bar{\nu}^{-2N^{\mathrm{FFT}}},0\bigr) .
                 \qquad \text{(Lemma \ref{lemma:geometric})}
				 \qedhere
\end{align*}
\end{proof}
We now obtain enclosures on $\bar{b}_n$ for $n \in I_{N^\mathrm{Alias}}$.
Combining Lemma \ref{lemma:boundb}  and Lemma \ref{lemma:alias} and setting
\begin{align*}
\bar{\varepsilon}_n:= 
\bar{\nu}^{|n|}  f_\mathrm{geo}\left(
                   \bar{\nu}^{-2N^{\mathrm{FFT}}},0\right) 
=\frac{2\bar{\nu}^{|n|}\left({\bar{\nu}_1^{-2N_1^{\mathrm{FFT}}}}+{\bar{\nu}_2^{-2N_2^{\mathrm{FFT}}}}\right)}{\left(1-\bar{\nu}_1^{-2N_1^{\mathrm{FFT}}}\right)\left(1-\bar{\nu}_2^{-2N_2^{\mathrm{FFT}}}\right)},
\end{align*}
we can enclose $\bar{b}_n$  by intervals 
\begin{align}\label{eq:bncompute}
    \bar{b}_n \in \begin{cases}
  \bar{b}^\mathrm{FFT}_n+\widehat{C}_{\bar{\rho}} [-\bar{\varepsilon}_n,\bar{\varepsilon}_n] & n \in I_{N^\mathrm{Alias}},\\ [-\frac{\widehat{C}_{\bar{\rho}}}{\bar{\nu}^{|n|}},\frac{\widehat{C}_{\bar{\rho}}}{\bar{\nu}^{|n|}}] & n \notin I_{N^\mathrm{Alias}}.
    \end{cases}
\end{align}

\begin{remark}\label{rem:sizeNFFT}
   The size of the aliasing errors is thus measured by the product of $\widehat{C}_{\log \bar{\nu}}$ and 
\[ \frac{2\bar{\nu}_1^{N_1^\mathrm{Alias}}\bar{\nu}_2^{N_2^\mathrm{Alias}}
\left({\bar{\nu}_1^{-2N_1^{\mathrm{FFT}}}}+{\bar{\nu}_2^{-2N_2^{\mathrm{FFT}}}}\right)}{\left(1-\bar{\nu}_1^{-2N_1^{\mathrm{FFT}}}\right)\left(1-\bar{\nu}_2^{-2N_2^{\mathrm{FFT}}}\right)}
,
\]
which quantifies the requirement that $N^{\mathrm{FFT}} \gg N^{\mathrm{Alias}}$ in order to obtain tight bounds.
\end{remark}

\begin{remark}\label{remark:barbprime}
The bounds in Theorem \ref{thm:radii} do not only involve $F(\bar{a})$, but also its derivative. Hence, we also need the Fourier coefficients $\bar{b}'=G'(\bar{a})$ of $\mathcal{G}' \circ \bar{u}$. 
As before, the enclosing intervals are constructed using the numerical approximation obtained through the discrete Fourier transform:
\begin{align*}
\bar{b}^{'\mathrm{FFT}}_{n} :=\frac{1}{4N_1^{\mathrm{FFT}}N_2^{\mathrm{FFT}}} \sum_{k \in J_{N^\mathrm{FFT}}}\mathcal{G}'\left(\bar{u}\left(x_{1,k_1},x_{2,k_2}\right)\right) e^{-i (n_1 x_{1,k_1}+n_2 x_{2,k_2})}.
\end{align*}
Repeating the procedure for $\mathcal{G}'(u)=e^u-1$ instead of $\mathcal{G}(u)=e^u-u-1$, in particular calculating $\widehat{C}'_{\bar{\rho}}$ from~\eqref{eq:defC} with $\mathcal{G}$ replaced by $\mathcal{G}'$, we end up with
\begin{align}\label{eq:bnprimecompute}
    \bar{b}'_n \in \begin{cases}
        {\bar{b}}^{'\mathrm{FFT}}_n+\widehat{C}'_{\bar{\rho}}[-\bar{\varepsilon}_n,\bar{\varepsilon}_n] & n \in I_{N^\mathrm{Alias}},\\
        [-\frac{\widehat{C}'_{\bar{\rho}}}{\bar{\nu}^{|n|}},\frac{\widehat{C}'_{\bar{\rho}}}{\bar{\nu}^{|n|}}] & n \notin I_{N^\mathrm{Alias}}.
    \end{cases}
\end{align}
We note that one does not need to use the same $N^{\mathrm{Alias}}$ (and $N^{\mathrm{FFT}}$) for $\mathcal{G}'$ as for $\mathcal{G}$, although in our code we did.

Finally, in Section~\ref{sec:Wbound} we will use the analogous construction for 
$\mathcal{G}''(u)=e^u$ and $\bar{b}''=G''(\bar{a})$. 
\end{remark}

\section{Bounds}
\label{sec:bounds}

In this section, formulas for the bounds $Y$, $Z$ and $W$ that appear in Theorem \ref{thm:radii} are derived. All these formulas can be evaluated (or at least bounded) algorithmically using interval arithmetic.
We assume that
\begin{itemize}
\item
	we have fixed $N^{\mathrm{Gal}}$ and computed numerically the Fourier coefficients  $\bar{a}$ of an approximation of a (candidate) solution. 
\item
    we have selected $N^{\mathrm{Jac}}$ satisfying~\eqref{e:Njacmin} and we have determined numerically a matrix $A^{N^{\mathrm{Jac}}}$ so that the operator $A$ is defined.
\item
    we have chosen $\bar{\rho}$ and $N^{\mathrm{Alias}} \ge N^{\mathrm{Jac}}$ and $N^{\mathrm{FFT}} \gg N^{\mathrm{Alias}}$ as introduced in Section~\ref{sec:auxestimates}. 
\end{itemize}	

\subsection{Computing $Y$}\label{sec:Ybound}
The first bound appearing in Theorem \ref{thm:radii} is the $Y$-bound, which should satisfy $
\|A F(\bar{a})\|_{\nu} \leq Y,
$
as stated in \eqref{eq:Ybound}. We can rewrite $\|A F(\bar{a})\|_{\nu}$ as
\begin{align}\label{eq:AF}
    \|A F(\bar{a})\|_{\nu} = \sum_{n \in I^+_{N^\mathrm{Alias}}}\left|\left(AF(\bar{a})\right)_n\right| \omega_{n}+\sum_{ n \notin I^+_{N^\mathrm{Alias}}}\left|\left(AF(\bar{a})\right)_n\right| \omega_n,
\end{align}
where $N^{\mathrm{Alias}} \ge N^{\mathrm{Jac}}$ essentially determines where we switch from relatively tight computation of terms to relatively loose estimation of the tail (which is nevertheless small in size in practice).
We start with the first summation.

\subsubsection{$n\in I^+_{N^\mathrm{Alias}}$}

Using the structure of $A$ given in \eqref{eq:defA} and using \eqref{eq:bncompute},  we write 
\begin{align*}
\sum_{n \in I^+_{N^\mathrm{Alias}}}\left|\left(AF(\bar{a})\right)_n\right| \omega_{n}\leq & \sum_{n \in I_{N^\mathrm{Jac}}^+}\left|\left(A^{N^\mathrm{Jac}} F^{N^\mathrm{Jac}}(
\bar{a})\right)_{n}\right| \omega_{n} 
+\sum_{\substack{n\in I^+_{N^\mathrm{Alias}}\\n \notin I_{N^\mathrm{Jac}}^+}} \frac{\left|\bar{b}^\mathrm{FFT}_{n}\right|+\widehat{C}_{\bar{\rho}}\bar{\varepsilon}_{n}}{\lambda_n} \omega_{n}.
\end{align*}
Even though both terms contain finitely many terms, we are not able to compute
$F^{N^\mathrm{Jac}}(\bar{a})$ exactly as we have that  $F_n(\bar{a})=\lambda_n\bar{a}_n+\bar{b}_n$, where we cannot evaluate $\bar{b}_n=G_n(\bar{a})$ exactly. Therefore, we use interval arithmetic both to compensate for rounding errors and to compute intervals containing $\bar{b}_n$ given in the right-hand side of~\eqref{eq:bncompute}.
With this interpretation of computability of $F^{N^{\mathrm{Jac}}}(\bar{a})$,
we set  
\begin{align*}    
Y^{N^{\mathrm{Alias}}}:=\sum_{n \in I_{N^\mathrm{Jac}}^+}\left|\left(A^{N^\mathrm{Jac}} F^{N^\mathrm{Jac}}(\bar{a})\right)_{n}\right| \omega_{n}+\sum_{\substack{n\in I^+_{N^\mathrm{Alias}}\\n \notin I_{N^\mathrm{Jac}}^+}} \frac{\left|\lambda_n\bar{a}_n+\bar{b}^\mathrm{FFT}_{n}\right|+\widehat{C}_{\bar{\rho}}\bar{\varepsilon}_{n}}{\lambda_n} \omega_{n},
\end{align*}
which bounds the first summation in~\eqref{eq:AF}.

\subsubsection{$n \notin I^+_{N^\mathrm{Alias}}$}

We assume $N^{\mathrm{Jac}}$ satisfies the two inequalities in ~\eqref{e:Njacmin}.
Letting $\lceil \cdot \rceil$ denote the ceiling function, we introduce
\begin{align*}
    \lambda_{\mathrm{min}} (N) := \min\{\lambda_{N_1+1,0},\underset{0\leq i\leq \lceil \frac{c}{q_1}\rceil}{\min}\lambda_{i,N_2+1} \},
\end{align*} 
which is used in the next lemma to bound the tail of $A$.
\begin{lemma}\label{lemma:lambda}
    Let $N \in \mathbb{N}^2$ be such that $N \geq N^{\mathrm{Jac}}$. Let $\lambda_n$ be as in \eqref{eq:deflambda}. Then
\begin{align*}
        \frac{1}{\lambda_n} \leq 
	\frac{1}{\lambda_{\mathrm{min}} (N)}  
	\qquad\text{for all }n\notin I^+_{N}.
\end{align*}
\end{lemma}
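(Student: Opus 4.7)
The plan is to exploit monotonicity of $\lambda_{n_1,n_2}$ on the nonnegative quadrant and combine it with a case split guided by the definition of $\lambda_{\min}(N)$. Regarding $\lambda$ as a function of real $n_1,n_2\ge 0$, direct differentiation gives
\begin{align*}
\partial_{n_2}\lambda &= 4 n_2 q_2^2 (n_1^2 q_1^2 + n_2^2 q_2^2), \\
\partial_{n_1}\lambda &= 2 n_1 q_1^2 \bigl(2(n_1^2 q_1^2 + n_2^2 q_2^2) - c^2\bigr).
\end{align*}
From the first expression, $n_2\mapsto \lambda_{n_1,n_2}$ is nondecreasing on $[0,\infty)$ for every $n_1\ge 0$. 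From the second, $n_1\mapsto \lambda_{n_1,n_2}$ is nondecreasing on $[c/(q_1\sqrt{2}),\infty)$, a condition that does not involve $n_2$.

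Next, I will split the analysis of $n\in \mathbb{N}^2\setminus I^+_N$ into two cases according to which coordinate exceeds the threshold. If $n_1\ge N_1+1$, then the standing assumption $N_1>c/q_1$ gives $n_1 q_1 > c > c/\sqrt{2}$, so both monotonicities apply and deliver $\lambda_n \ge \lambda_{n_1,0} \ge \lambda_{N_1+1,0}\ge \lambda_{\min}(N)$. Otherwise $n_1\le N_1$ and $n_2\ge N_2+1$, and monotonicity in $n_2$ already yields $\lambda_n \ge \lambda_{n_1,N_2+1}$. I then subsplit on $n_1$: when $n_1\le \lceil c/q_1\rceil$, the value $\lambda_{n_1,N_2+1}$ is by definition one of the terms entering the minimum that defines $\lambda_{\min}(N)$; when $n_1\ge \lceil c/q_1\rceil+1$, one has $n_1 q_1 > c > c/\sqrt{2}$, so monotonicity in $n_1$ allows me to step down to $\lambda_{\lceil c/q_1\rceil,N_2+1}\ge \lambda_{\min}(N)$.

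The main, and genuinely unavoidable, subtle point is the non-monotonicity of $\lambda$ in $n_1$ on the window $n_1 q_1 < c/\sqrt{2}$: this is precisely why $\lambda_{\min}(N)$ is defined as an explicit minimum over the entire range $0\le i\le \lceil c/q_1\rceil$ at level $n_2=N_2+1$, so that every value of $n_1$ not captured by the monotonicity is handled exhaustively by direct inspection. Once the pointwise bound $\lambda_n\ge \lambda_{\min}(N)$ has been established for all $n\notin I_N^+$, the stated inequality follows by taking reciprocals, using that $\lambda_{\min}(N)>0$ in the regime of interest (as flagged in Remark~\ref{rem:lambda}, and as is immediate for $\lambda_{N_1+1,0}$ since $(N_1+1)^2 q_1^2>c^2$).
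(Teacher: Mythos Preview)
Your proof is correct and follows essentially the same approach as the paper: establish monotonicity of $\lambda_{n_1,n_2}$ in each variable and then do a case split over the ``boundary'' of $I^+_N$, distinguishing $n_1\ge N_1+1$ from $n_2\ge N_2+1$ with $0\le n_1\le\lceil c/q_1\rceil$. Your version is simply more explicit --- you compute the derivatives, identify the sharper monotonicity threshold $n_1\ge c/(q_1\sqrt{2})$, and spell out the subcase $n_1>\lceil c/q_1\rceil$ --- whereas the paper compresses all of this into two sentences.
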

\begin{proof}
   From Remark \ref{rem:lambda}, we know that $\lambda_n$ is positive and increasing in both $n_1$ and $n_2$ for $n\notin I^+_N$, since $I^+_{N^{\mathrm{Jac}}}\subset I^+_N$. Hence, the smallest values of $\lambda_n$ for $n\notin I^+_N$ is attained either at $n=(N_1+1,0)$ or at $n_2=N_2+1$ and some $0\leq n_1\leq \lceil \frac{c}{q_1}\rceil$. This proves the assertion on the inverse $\lambda_n^{-1}$.
\end{proof}
We now bound the second summation in~\eqref{eq:AF} as follows:
\begin{align*}
    \sum_{ n \notin I^+_{N^\mathrm{Alias}}}\left|\left(AF(\bar{a})\right)_n\right| \omega_n&
=\sum_{n\notin I_{N^\mathrm{Alias}}^+ } \frac{\left|\bar{b}_{n}\right|}{\lambda_n} \multi_n \nu^{n} \\
& \leq \frac{\widehat{C}_{\bar{\rho}}}{\lambda_\mathrm{min}(N^\mathrm{Alias})}  \sum_{n\notin I_{N^\mathrm{Alias}}^+ }\multi_n\frac{\nu^{n}}{\bar{\nu}^{n}} \qquad \text{ (Lemmas \ref{lemma:boundb} and \ref{lemma:lambda})}\\
& = \frac{\widehat{C}_{\bar{\rho}}}{\lambda_\mathrm{min}(N^\mathrm{Alias})} f_\mathrm{geo}\left(\frac{\nu}{\bar{\nu}},N^\mathrm{Alias}\right) \qquad \text{(Lemma \ref{lemma:geometric})}.
\end{align*}
Both the bound above and the formula for $Y^{N^{\mathrm{Alias}}}$ are evaluated using interval arithmetic.
We take the upper bound of the interval containing the sum of these to determine $Y$:
$$
Y :=\text{upperbound}\left(Y^{N^{\mathrm{Alias}}}+\frac{\widehat{C}_{\bar{\rho}}}{\lambda_\mathrm{min}(N^\mathrm{Alias})}  f_\mathrm{geo}\left(\frac{\nu}{\bar{\nu}},N^\mathrm{Alias}\right)\right).
$$
In the next subsections we will also use interval arithmetic to compute the $Z$-bound and the $W$-bound. However, in the derivation for these bounds we will refrain from repeating 
that we are dealing with intervals and that we take the upper bound of the intervals in the end.

\subsection{Computing $Z$}
\label{sec:Zbound}

Since the norm on $X$ is a weighted $\ell^1$-norm, we can express the operator norm as (e.g.\ Lemma 2.13 in \cite{Breden_gPC})
\begin{align*}
    \|I-ADF(\bar{a})\|_{B(X)}=\sup_{k \in \mathbb{N}^2}\frac{\||[I-ADF(\bar{a})]e_k\|_{\nu}}{\|e_k\|_{\nu}} ,
\end{align*}
where $(e_k)_n=\delta_{kn}$ for $k,n \in \mathbb{N}^2$ and where $\delta$ denotes the Kronecker delta. Hence, we consider the norm of the ``columns'' of the operator $I-ADF(\bar{a})$ imagined as an infinite matrix in order to determine the $Z$-bound. For that purpose, we introduce some notation. Let $\mathbbm{1}_{S}$ be the indicator function that is $1$ if $S$ is true and $0$ otherwise. 
We have for $k,n \in \mathbb{N}^2$
\begin{align}\label{eq:IADF}
    \left(ADF(\bar{a})e_k\right)_n
    &=\sum_{j\in I_{N^\mathrm{Jac}}^+} A^{N^\mathrm{Jac}}_{nj}\lambda_j\delta_{kj}\mathbbm{1}_{\{n \in I_{N^\mathrm{Jac}}^+\}}+\delta_{kn}\mathbbm{1}_{\{n \notin I_{N^\mathrm{Jac}}^+\}}
	 +(ADG(\bar{a}) e_k))_n.
\end{align}
To make the expression for $DG(\bar{a}) e_k$ more explicit, we introduce some notation related to the fact that we are working with cosine series:
\begin{equation*}
\begin{aligned}[c]
h_1(k)&:=(k_1,k_2), \\
h_2(k)&:=(-k_1,k_2),
\end{aligned}
\qquad\qquad\qquad
\begin{aligned}[c]
 h_3(k)&:=(k_1,-k_2),\\
h_4(k)&:=(-k_1,-k_2).\\
\end{aligned}
\end{equation*}
\begin{remark}\label{rem:rewritesum}
The definition of $h_i$ for $i=1,2,3,4$ is chosen such that, with $\multi_k$ defined in  \eqref{eq:defgn}, for any $a \in \mathbb{C}^{\mathbb{Z}^2}$ such that $\sum_{n\in\mathbb{Z}^2} |a_n|<\infty$ it holds that
    \begin{align*}
        \sum_{n\in \mathbb{Z}^2}a_n=\sum_{n\in\mathbb{N}^2}\frac{\multi_n}{4}\sum_{i=1}^4a_{h_i(n)} .
    \end{align*}
\end{remark}
For the derivative of the nonlinearity we have $DG(a)\tilde{a}=G'(a)\ast  \tilde{a}$. Denoting $\bar{b}'=G'(\bar{a})$, we thus infer that $DG(\bar{a}) e_k = \bar{b}' \ast e_k$, hence (using Remark \ref{rem:rewritesum}) some bookkeeping leads to 
\begin{align*}
    (ADG(\bar{a})e_k)_n &=\sum_{j \in I_{N^\mathrm{Jac}}^+}A_{nj}^{N^\mathrm{Jac}}\sum_{m\in \mathbb{Z}^2}\bar{b}'_{|j-m|}(e_k)_{|m|}\mathbbm{1}_{\{n \in I_{N^\mathrm{Jac}}^+\}}+\frac{1}{\lambda_n}\sum_{m\in \mathbb{Z}^2}\bar{b}'_{|n-m|}(e_k)_{|m|}\mathbbm{1}_{\{n \notin I_{N^\mathrm{Jac}}^+\}} \\
    &=\sum_{j \in I_{N^\mathrm{Jac}}^+}A_{nj}^{N^\mathrm{Jac}}\frac{\multi_k}{4}\sum_{i=1}^4 \bar{b}'_{|j - h_i(k)|}\mathbbm{1}_{\{n \in I_{N^\mathrm{Jac}}^+\}}+\frac{1}{\lambda_n}\frac{\multi_k}{4}\sum_{i=1}^4 \bar{b}'_{|n - h_i(k)|}\mathbbm{1}_{\{n \notin I_{N^\mathrm{Jac}}^+\}}.
\end{align*}
Substituting this into \eqref{eq:IADF}, we obtain
\begin{align*}
    \left([I-ADF(\bar{a})]e_k\right)_n=&\Biggl((e_k)_n-\sum_{j\in I_{N^\mathrm{Jac}}^+} A^{N^\mathrm{Jac}}_{nj}\lambda_j\delta_{kj}-\sum_{j \in I_{N^\mathrm{Jac}}^+}A_{nj}^{N^\mathrm{Jac}}\frac{\multi_k}{4}\sum_{i=1}^4 \bar{b}'_{|j - h_i(k)|}\Biggr)\mathbbm{1}_{\{n \in I_{N^\mathrm{Jac}}^+\}}\\
    &\hspace{2cm} +\left((e_k)_n-\delta_{kn} -\frac{1}{\lambda_n}\frac{\multi_k}{4}\sum_{i=1}^4 \bar{b}'_{|n - h_i(k)|}\right)\mathbbm{1}_{\{n \notin I_{N^\mathrm{Jac}}^+\}}.
\end{align*}
Since $(e_k)_n=\delta_{kn}$ for $k,n \in \mathbb{N}$
and $\|e_k\|_\nu=\omega_k$, we infer that
\begin{align}\label{eq:IADFsums}
    \frac{\||[I-ADF(\bar{a})]e_k\|_{\nu}}{\|e_k\|_{\nu}}=&\sum_{n \in I_{N^\mathrm{Jac}}^+}\Biggl|\delta_{kn}-\sum_{j\in I_{N^\mathrm{Jac}}^+} A^{N^\mathrm{Jac}}_{nj}\lambda_j\delta_{kj}-\sum_{j \in I_{N^\mathrm{Jac}}^+}A_{nj}^{N^\mathrm{Jac}}\frac{\multi_k}{4}\sum_{i=1}^4 \bar{b}'_{|j - h_i(k)|}\Biggr|\frac{\omega_n}{\omega_k} \nonumber\\
    &\hspace{2cm} +\sum_{n \notin I_{N^\mathrm{Jac}}^+}\frac{1}{\lambda_n}\left|\frac{\multi_k}{4}\sum_{i=1}^4 \bar{b}'_{|n - h_i(k)|}\right|\frac{\omega_n}{\omega_k}.
\end{align}
We estimate both summations separately, starting with the first one.

\subsubsection{$n\in I_{N^\mathrm{Jac}}^+$}\label{sec:Zboundsmalln}

In the first summation of \eqref{eq:IADFsums}, the first two terms are only non-zero for $k \in I^+_{N^\mathrm{Jac}}$. Therefore, we rewrite the summation once more:
\begin{align}\label{eq:smallnZ}
& \sum_{n \in I_{N^\mathrm{Jac}}^+}\Biggl|\delta_{kn}-\sum_{j\in I_{N^\mathrm{Jac}}^+} A^{N^\mathrm{Jac}}_{nj}\lambda_j\delta_{kj}-\sum_{j \in I_{N^\mathrm{Jac}}^+}A_{nj}^{N^\mathrm{Jac}}\frac{\multi_k}{4}\sum_{i=1}^4 \bar{b}'_{|j - h_i(k)|}\Biggr|\frac{\omega_n}{\omega_k} \nonumber\\
    &\hspace{2cm} =\sum_{n\in I_{N^\mathrm{Jac}}^+}\Biggl|[\delta_{kn}-A^{N^\mathrm{Jac}}_{nk}\lambda_k]\mathbbm{1}_{\{k\in I_{N^\mathrm{Jac}}^+\}}-\sum_{j\in I_{N^\mathrm{Jac}}^+}A^{N^\mathrm{Jac}}_{nj}\frac{\multi_k}{4}\sum_{i=1}^4 \bar{b}'_{|j - h_i(k)|}\Biggr|\frac{\omega_n}{\omega_k}.
\end{align}
The first part, $[\delta_{kn}-A^{N^\mathrm{Jac}}_{nk}\lambda_k]\mathbbm{1}_{\{k\in  I_{N^\mathrm{Jac}}^+\}}$, can be computed explicitly for all $n\in I^+_{N^\mathrm{Jac}}$.
The second part, $\sum_{j\in I_{N^\mathrm{Jac}}^+}A^{N^\mathrm{Jac}}_{nj}\frac{\multi_k}{4}\sum_{i=1}^4 \bar{b}'_{|j - h_i(k)|}$, is computed using \eqref{eq:bnprimecompute} for ``small'' $k\in I^+_{N^\mathrm{Col}}$, whereas it is estimated for all ``large'' $k \notin I^+_{N^\mathrm{Col}}$. Here $N^\mathrm{Col} \geq N^\mathrm{Jac}$ indicates where we switch from essentially evaluating ``columns'' of the linear operator to uniformly estimating. 
 
After computing the finite sum in~\eqref{eq:smallnZ} for all $k \in I^+_{N^\mathrm{Col}}$, we are left with uniformly bounding the second summation in the right-hand side of~\eqref{eq:smallnZ} for 
$k \notin I^+_{N^\mathrm{Col}}$.  Before we proceed we analyze
\[
  \mumu(j,k) := \frac{1}{4} \sum_{i=1}^4  	\frac{1}{\bar{\nu}^{|j-h_i(k)|}\nu^k}
\]
for $j \in I^+_{N^\mathrm{Jac}}$ and $k \notin I^+_{N^\mathrm{Col}}$. 
For any $N \in \mathbb{N}^2$ we introduce the ``boundary'' $\muI_N \subset \mathbb{N}^2$ of $I^+_{N}$:
\[
   \muI_N :=  
   \bigl\{(k_1,N_2+1) :  0 \leq k_1 \leq N_1 \bigr\} 
   \,\cup\,
   \bigl\{(N_1+1,k_2) :  0 \leq k_2 \leq N_2 \bigr\} .
\]
We then define
\[
  \muhat(j,N) := \max_{k\in \muI_N} \mumu(j,k).
\] 

\begin{lemma}\label{lemma:nuest}
    Let $N \in \mathbb{N}^2$ and $j \in I^+_{N}$. Then 
	$ \mumu(j,k)  \leq \muhat(j,N)$ for all $k \notin I^+_{N}$,
\end{lemma}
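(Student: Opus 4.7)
The plan is to observe that $\mumu(j,k)$ factorizes across the two coordinates of $k$ and that each coordinate factor is monotonically decreasing once $k_p$ has moved past $j_p$; a short case split on whether $k_1>N_1$, $k_2>N_2$, or both then pushes the supremum onto the boundary $\muI_N$.

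First I would unpack the sum over the four reflections $h_i$. For $j,k\in\mathbb{N}^2$ each choice of signs in $h_i(k)=(\pm k_1,\pm k_2)$ contributes, coordinatewise, one of $|j_p-k_p|$ or $j_p+k_p$, each combination appearing exactly once, so the sum factorizes:
\begin{align*}
\mumu(j,k)=g_1(k_1)\,g_2(k_2),
\quad\text{where}\quad
g_p(k_p):=\frac{\bar{\nu}_p^{-|j_p-k_p|}+\bar{\nu}_p^{-(j_p+k_p)}}{2\nu_p^{k_p}}.
\end{align*}
The key monotonicity follows from this representation: for $k_p\geq j_p$ the absolute value simplifies and a direct computation gives $g_p(k_p)=(\bar{\nu}_p^{j_p}+\bar{\nu}_p^{-j_p})/(2(\nu_p\bar{\nu}_p)^{k_p})$, which is strictly decreasing in $k_p$ since $\nu_p\bar{\nu}_p\geq\bar{\nu}_p=e^{\bar{\rho}_p}>1$.

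Now I would do a case split on $k\notin I^+_N$. The hypothesis $j\in I^+_N$ gives $j_p\leq N_p$, and so $k_p>N_p$ automatically puts $g_p$ in its decreasing regime with $g_p(k_p)\leq g_p(N_p+1)$. If exactly one coordinate of $k$ exceeds $N$, say $k_1>N_1$ and $k_2\leq N_2$, then $\mumu(j,k)\leq g_1(N_1+1)g_2(k_2)=\mumu(j,(N_1+1,k_2))$ with $(N_1+1,k_2)\in\muI_N$, and the symmetric case is identical. In the remaining case $k_1>N_1$ and $k_2>N_2$, the natural candidate $(N_1+1,N_2+1)$ is not in $\muI_N$; here one extra application of monotonicity (valid since $N_2\geq j_2$) gives $g_2(N_2+1)\leq g_2(N_2)$, hence $\mumu(j,k)\leq\mumu(j,(N_1+1,N_2))$ with $(N_1+1,N_2)\in\muI_N$. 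Taking the maximum over $\muI_N$ yields the stated bound. The only mild obstacle is this corner case, which the factorization resolves almost for free.
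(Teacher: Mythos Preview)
Your proof is correct and follows essentially the same strategy as the paper: establish coordinate-wise monotonicity of $\mumu(j,\cdot)$ once $k_p\geq j_p$, then conclude that the supremum over $k\notin I^+_N$ is attained on $\muI_N$. The paper argues summand-by-summand via the rewriting $|j-h_i(k)|=|h_i(j)-k|$ rather than factorizing, and leaves the corner case implicit (its monotonicity is stated for $k_p\geq N_p$, so one can step from $(N_1+1,N_2+1)$ down to $(N_1+1,N_2)$); your factorization $\mumu(j,k)=g_1(k_1)g_2(k_2)$ is a tidy alternative that makes this step explicit.
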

\begin{proof}
	Fix $j \in I^+_{N}$.
	We first observe that 
	$\mumu(j,k)= \frac{1}{4} \sum_{i=1}^4  	\frac{1}{\bar{\nu}^{|h_i(j)-k|}\nu^k}$. Since $\bar{\nu} > \nu 
	\geq 1$, each of the four summands decreases in $k_1$ for $k_1 \geq N_1$ and 
	decreases in $k_2$ for $k_2 \geq N_2$.
	Hence the supremum of $\mumu(j,k)$ over $k \notin I^+_{N}$
	is attained for some $k \in \muI_N$. This proves the assertion.		
\end{proof}

We are now ready to estimate the second summation in the right-hand side of \eqref{eq:smallnZ}. By using \eqref{eq:bnprimecompute} and Lemma~\ref{lemma:nuest}, we conclude that for any $k \notin I^+_{N^\mathrm{Col}}$
\begin{align*}
  \sum_{n\in I_{N^\mathrm{Jac}}^+}\sum_{j\in I_{N^\mathrm{Jac}}^+}\left|A_{nj}^{N^\mathrm{Jac}}\right|\left|\frac{\multi_k}{4}\sum_{i=1}^4 \bar{b}'_{|j - h_i(k)|}\right|\frac{\omega_n}{\omega_k} 
  \hspace*{-2cm} \\
  &\leq   \sum_{n\in I_{N^\mathrm{Jac}}^+}\sum_{j\in I_{N^\mathrm{Jac}}^+}\left|A_{nj}^{N^\mathrm{Jac}}\right|\frac{\multi_k}{4}\sum_{i=1}^4\frac{\widehat{C}'_{\bar{\rho}}}{\bar{\nu}^{|j-h_i(k)|}}\frac{\omega_n}{\multi_k\nu^k}\\
    &\leq 
  \widehat{C}'_{\bar{\rho}}\sum_{n\in I_{N^\mathrm{Jac}}^+}\sum_{j\in I_{N^\mathrm{Jac}}^+}\left|A_{nj}^{N^\mathrm{Jac}}\right|  \muhat(j,N^\mathrm{Col}) \, \omega_n,
\end{align*}
where we have used that $N^\mathrm{Col} \geq N^\mathrm{Jac}$.
We note that the right-hand side does not depend on~$k$ and requires only a finite computation to evaluate.

\subsubsection{$n \notin I_{N^\mathrm{Jac}}^+$}\label{sec:Zboundlargen}
We now investigate the second summation in the right-hand side of \eqref{eq:IADFsums}, which is given by
\begin{align}\label{eq:largenY}
\sum_{n \notin I_{N^\mathrm{Jac}}^+ }\frac{1}{\lambda_n}\left|\frac{\multi_k}{4}\sum_{i=1}^4 \bar{b}'_{|n - h_i(k)|}\right|\frac{\omega_n}{\omega_k}.
\end{align}
We start by considering ``small'' $k$, i.e., $k \in I^+_{N^\mathrm{Col}}$. 
For these values of $k$, we split the summation in~\eqref{eq:largenY} into ``small'' values of $n \in I^+_{N^\mathrm{Row}}$, where we simply evaluate the term, and ``large'' values of $n \notin I^+_{N^\mathrm{Row}}$, where we need an estimate.
Here $N^\mathrm{Row} \geq N^\mathrm{Jac}$ indicates where we switch from explicitly calculating ``rows'' to estimating uniformly
using the rougher enclosure of~$\bar{b}_n'$. 
Hence we write 
\begin{align*}
    \sum_{n \notin I_{N^\mathrm{Jac}}^+}\frac{1}{\lambda_n}\left|\frac{\multi_k}{4}\sum_{i=1}^4 \bar{b}'_{|n - h_i(k)|}\right|\frac{\omega_n}{\omega_k}=&\sum_{\substack{n\in I_{N^\mathrm{Row}}^+\\n \notin I_{N^\mathrm{Jac}}^+}} \frac{1}{\lambda_n}\left|\frac{\multi_k}{4}\sum_{i=1}^4 \bar{b}'_{|n - h_i(k)|}\right|\frac{\omega_n}{\omega_k} \nonumber\\[-4mm]
    & \hspace{3cm}
	+\sum_{n\notin I^+_{N^\mathrm{Row}} }\frac{1}{\lambda_n}\left|\frac{\multi_k}{4}\sum_{i=1}^4 \bar{b}'_{|n - h_i(k)|}\right|\frac{\omega_n}{\omega_k}.
\end{align*}
The first term in the right-hand side is calculated (for each $k \in I^+_{N^\mathrm{Col}}$). 
For the second term we derive an estimate.
Indeed, we observe that 
\[
|n-h_i(k)|\geq \bigl||n|-|h_i(k)|\bigr|=\bigl||n|-|k|\bigr|\geq |n|-|k|=n-k.
\]
We then apply Lemmas~\ref{lemma:boundb} and~\ref{lemma:lambda}, and subsequently Lemma~\ref{lemma:geometric} to obtain
\begin{align*}
   \sum_{n\notin I^+_{N^\mathrm{Row}}}\frac{1}{\lambda_n}\left|\frac{\multi_k}{4}\sum_{i=1}^4 \bar{b}'_{|n - h_i(k)|}\right|\frac{\omega_n}{\omega_k}&\leq \frac{1}{\lambda_\mathrm{min}(N^\textrm{Row})}\sum_{n\notin I^+_{N^\mathrm{Row}}}\frac{1}{4}\sum_{i=1}^4\frac{\widehat{C}'_{\bar{\rho}}}{\bar{\nu}^{|n-h_i(k)|}}\frac{\multi_n\nu^n}{\nu^k}\\
      &\leq \frac{\widehat{C}'_{\bar{\rho}}}{\lambda_\mathrm{min}(N^\textrm{Row})}\sum_{n\notin I^+_{N^\mathrm{Row}}}\frac{\multi_n}{4}\sum_{i=1}^4\frac{1}{\bar{\nu}^{n-k}}\frac{\nu^n}{\nu^k} 
	\\&=
	\frac{\widehat{C}'_{\bar{\rho}}\bar{\nu}^k}{\lambda_\mathrm{min}(N^\textrm{Row})\nu^k}f_\mathrm{geo}\left(\frac{\nu}{\bar{\nu}},N^\mathrm{Row}\right).
     \end{align*}
We note that this estimate holds for any $k\in \mathbb{N}^2$, but we will use it only for $k \in I^+_{N^\mathrm{Col}}$. 

We are left with establishing a uniform bound on~\eqref{eq:largenY} for $k \notin I^+_{N^\mathrm{Col}}$.
In order to take advantage of computational power to tighten the bound, we introduce another computational parameter
$N^\mathrm{Tail}$ with $N^\mathrm{Jac}\leq N^\mathrm{Tail}\leq N^\mathrm{Col}$
and split
\begin{align}
\sum_{n \notin I_{N^\mathrm{Jac}}^+ }\frac{1}{\lambda_n}\left|\frac{\multi_k}{4}\sum_{i=1}^4 \bar{b}'_{|n - h_i(k)|}\right|\frac{\omega_n}{\omega_k}
&=
\sum_{\substack{n\in I_{N^\mathrm{Tail}}^+\\n \notin I_{N^\mathrm{Jac}}^+}}
\frac{1}{\lambda_n}\left|\frac{\multi_k}{4}\sum_{i=1}^4 \bar{b}'_{|n - h_i(k)|}\right|\frac{\omega_n}{\omega_k}
\nonumber\\[-5mm]
&\hspace{2.5cm}+
\sum_{n \notin I_{N^\mathrm{Tail}}^+ }\frac{1}{\lambda_n}\left|\frac{\multi_k}{4}\sum_{i=1}^4 \bar{b}'_{|n - h_i(k)|}\right|\frac{\omega_n}{\omega_k}.
\label{eq:splitNtail}
\end{align}
For the first term in the right-hand side of~\eqref{eq:splitNtail}
we proceed as in Section~\ref{sec:Zboundsmalln} and use Lemma~\ref{lemma:nuest} 
to obtain
\begin{align*}
\sum_{\substack{n\in I_{N^\mathrm{Tail}}^+\\n \notin I_{N^\mathrm{Jac}}^+}}
\frac{1}{\lambda_n}\left|\frac{\multi_k}{4}\sum_{i=1}^4 \bar{b}'_{|n - h_i(k)|}\right|\frac{\omega_n}{\omega_k}
&\leq
\widehat{C}'_{\bar{\rho}}
\sum_{\substack{n\in I_{N^\mathrm{Tail}}^+\\n \notin I_{N^\mathrm{Jac}}^+}}
\frac{1}{\lambda_n}\left|\frac{1}{4}\sum_{i=1}^4 \frac{1}{\bar{\nu}^{|n-h_i(k)|}\nu^k} \right|\omega_n  
\\
&\leq
\widehat{C}'_{\bar{\rho}}
\sum_{\substack{n\in I_{N^\mathrm{Tail}}^+\\n \notin I_{N^\mathrm{Jac}}^+}}
\frac{\muhat(n,N^\mathrm{Col}) \, \omega_n}{\lambda_n} .
\end{align*}
For the second term in the right-hand side of~\eqref{eq:splitNtail}
we use Lemma~\ref{lemma:lambda} to infer that
\begin{align*}
     \sum_{n\notin I^+_{N^\mathrm{Tail}}}\frac{1}{\lambda_n}\left|\frac{\multi_k}{4}\sum_{i=1}^4 \bar{b}'_{|n - h_i(k)|}\right|\frac{\omega_n}{\omega_k}&\leq \frac{1}{\lambda_\textrm{min}(N^\textrm{Tail})}\sum_{n\in \mathbb{N}^2}\frac{1}{4}\sum_{i=1}^4\left|\bar{b}'_{|n-h_i(k)|}\right|\frac{\multi_n\nu^n}{\nu^k} \\
          &\leq \frac{1}{\lambda_\textrm{min}(N^\textrm{Tail})}\sum_{n\in \mathbb{N}^2}\frac{1}{4}\sum_{i=1}^4\left|\bar{b}'_{|h_i(n)-k|}\right|\multi_n\nu^{|n-k|} \, .
\end{align*}
Next we rewrite the sum over 
$n\in\mathbb{N}^2$ into a sum over  $n\in \mathbb{Z}^2$ (see Remark \ref{rem:rewritesum}) and use  \eqref{eq:bnprimecompute} and Lemma~\ref{lemma:geometric} to obtain the explicit uniform estimate
\begin{align*}
\sum_{n\in \mathbb{N}^2}\frac{1}{4}\sum_{i=1}^4\left|\bar{b}'_{|h_i(n)-k|}\right|\multi_n\nu^{|n-k|} 
     &= \sum_{n\in \mathbb{Z}^2}\left|\bar{b}'_{|n-k|}\right|\nu^{|n-k|}\\
     &= \|\bar{b}'\|_{\nu}\\
&= \sum_{n\in I_{N^\mathrm{Alias}}} |\bar{b}'_{|n|}|\nu^{|n|} +\sum_{n\notin I_{N^\textrm{Alias}}}|\bar{b}'_{|n|}|\nu^{|n|}\\
     &= \sum_{n\in I^+_{N^\mathrm{Alias}}} |\bar{b}'_{n}|\omega_n+\sum_{n\notin I_{N^\textrm{Alias}}}\frac{\widehat{C}'_{\bar{\rho}}}{\bar{\nu}^{|n|}}\nu^{|n|}\\
     &\leq  \sum_{n\in I^+_{N^\mathrm{Alias}}} |\bar{b}'_{n}|\omega_n+\widehat{C}'_{\bar{\rho}}f_\mathrm{geo}\left(\frac{\nu}{\bar{\nu}},N^\mathrm{Alias}\right).
\end{align*}
We note that this estimate again holds for all $k \in \mathbb{N}^2$, but it is used only for $k\notin I^+_{N^\mathrm{Col}}$.

\subsubsection{Overall $Z$-bound}
To summarize, to estimate the operator norm of $I-ADF(\bar{a})$
the strategy in this section has been to compute where we can and to estimate (uniformly) where we have to. 
We collect the results from Sections~\ref{sec:Zboundsmalln} and~\ref{sec:Zboundlargen}
for $k \in I^+_{N^\mathrm{Col}}$ into
\begin{align*}
    Z^\mathrm{Col}&:=\underset{k \in I^+_{N^\mathrm{Col}}}{\max}\Bigg\{\sum_{n\in I_{N^\mathrm{Jac}}^+}\Biggl|[\delta_{kn}-A^{N^\mathrm{Jac}}_{nk}\lambda_k]\mathbbm{1}_{\{k\in I_{N^\mathrm{Jac}}^+\}}-\sum_{j\in I_{N^\mathrm{Jac}}^+}A^{N^\mathrm{Jac}}_{nj}\frac{\multi_k}{4}\sum_{i=1}^4 \bar{b}'_{|j - h_i(k)|}\Biggr|\frac{\omega_n}{\omega_k}\\
    &\hspace{2.5cm}
	+\sum_{\substack{n\in I^+_{N^\mathrm{Row}}\\n \notin I^+_{N^\mathrm{Jac}}}} \frac{1}{\lambda_n}\left|\frac{\multi_k}{4}\sum_{i=1}^4 \bar{b}'_{|n - h_i(k)|}\right|\frac{\omega_n}{\omega_k} 
	\:+\:
	\frac{\widehat{C}'_{\bar{\rho}}\bar{\nu}^k}{\lambda_\mathrm{min}(N^\textrm{Row})\nu^k}f_\mathrm{geo}\left(\frac{\nu}{\bar{\nu}},N^\mathrm{Row}\right)\Bigg\},
\end{align*}
and those for $k \notin I^+_{N^\mathrm{Col}}$ into
\begin{align*}
    Z^\mathrm{Est}&:=\widehat{C}'_{\bar{\rho}}\sum_{n\in I_{N^\mathrm{Jac}}^+}\sum_{j\in I_{N^\mathrm{Jac}}^+}\left|A_{nj}^{N^\mathrm{Jac}}\right|
    \muhat(j,N^\mathrm{Col}) \,\omega_n 
	+ \: \widehat{C}'_{\bar{\rho}}
	\sum_{\substack{n\in I_{N^\mathrm{Tail}}^+\\n \notin I_{N^\mathrm{Jac}}^+}}
	\frac{\muhat(n,N^\mathrm{Col}) \, \omega_n}{\lambda_n} 
	\\
    &\hspace{5cm}
	+\: \frac{1}{\lambda_\textrm{min}(N^\textrm{Tail})}\Biggl[ \, \sum_{n\in I^+_{N^\mathrm{Alias}}} |\bar{b}'_{n}|\omega_n+\widehat{C}'_{\bar{\rho}}f_\mathrm{geo}\left(\frac{\nu}{\bar{\nu}},N^\mathrm{Alias}\right)\Biggr] .
\end{align*}
In both expressions above we evaluate $\bar{b}'_n$ in an interval arithmetic sense through~\eqref{eq:bnprimecompute}.
Finally, we set 
 $  Z:=\max\{Z^\mathrm{Col},Z^\mathrm{Est}\}$.

\begin{remark}\label{rem:5blocks}
Observe that $Z^\mathrm{Col}$ consists of three terms and $Z^\mathrm{Est}$ consists of two terms, i.e., 
$Z^\mathrm{Col}:=\max\{ Z^\mathrm{Col}_{1k}+Z^\mathrm{Col}_{2k}+Z^\mathrm{Col}_{3k} : k \leq N^{Col} \}$ and $Z^\mathrm{Est}:=Z^\mathrm{Est}_1+Z^\mathrm{Est}_2+Z^\mathrm{Est}_3$. Essentially we have divided the estimate for the operator norm $\|I-ADF(\bar{a})\|_{B(X)}$ into analyzing the six blocks of $I-ADF(\bar{a})$ outlined schematically in Figure \ref{fig:tikz}.

We see that values $\bar{b}'_n$ appear in $Z_1^\mathrm{Col}$, $Z_2^\mathrm{Col}$ and $Z_3^\mathrm{Est}$. 
This makes sense only if for most $n$ appearing in those summations,
the values $\bar{b}'_n$ are computed via the discrete Fourier transform with tight aliasing error as described in Section \ref{sec:alias} rather than using the rough uniform estimate from Section \ref{sec:generalbound}.
This is trivially satisfied for the summation in $Z_3^\mathrm{Est}$, but for $Z_1^\mathrm{Col}$ and $Z_2^\mathrm{Col}$ this implies a (mild) relation between $N^\mathrm{Alias}$ on the one hand and $N^\mathrm{Col}$ and $N^\mathrm{Row}$ on the other.  
This is indicated schematically by the diagonal strip of width $N^\mathrm{Alias}$, representing the Toeplitz matrix of $\Pi^{N^\mathrm{Alias}} \bar{b}'$, which is part of $DF(\bar{a})$. The diagonal strip should cover a ``significant part'' of the blocks corresponding to $Z_1^\mathrm{Col}$ and $Z_2^\mathrm{Col}$. Similarly, it is a waste of computational resources to choose $N^\mathrm{Alias}$ too large and hence compute accurately values of $\bar{b}'_n$ not needed to determine $Z_1^\mathrm{Col}$ and $Z_2^\mathrm{Col}$.
\end{remark}

\begin{figure}[t]
\centering
\includegraphics[width=0.55\textwidth]{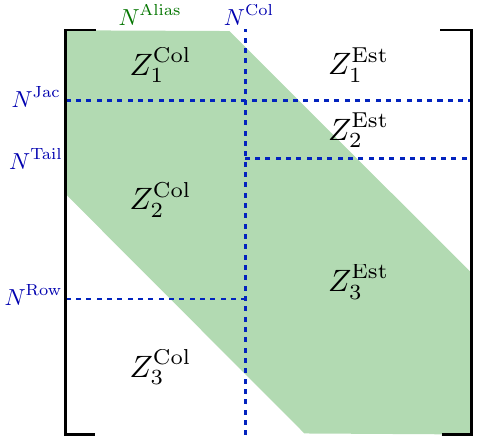}
\caption{Schematic representation of the different blocks of $I-ADF(\bar{a})$ appearing in the analysis of ${\|I-ADF(\bar{a})\|_{B(X)}}$ and their relation with $N^\mathrm{Jac}$, $N^\mathrm{Col}$, $N^\mathrm{Row}$ and $N^\mathrm{Tail}$. The diagonal band represents a Toeplitz matrix of width $N^\mathrm{Alias}$, see Remark~\ref{rem:5blocks}.}
\label{fig:tikz}
\end{figure}

\subsection{Computing $W$}
\label{sec:Wbound}

Fixing $r^\ast>0$, we need a $W\in \mathbb{R}^+$ such that
$$
\|A[DF(\bar{a}+w)-DF(\bar{a})]\|_{B(X)}\leq W\|w\|_\nu \qquad \text{whenever } \|w\|_\nu \leq r^\ast.
$$
For any $v$ such that $\|v\|_\nu\leq 1$, we define $g_v(w):=ADF(\bar{a}+w)v$, so that 
\begin{align*}
    g_v(w)-g_v(0)=A[DF(\bar{a}+w)-DF(\bar{a})]v,
\end{align*}
and $\|A[DF(\bar{a}+w)-DF(\bar{a})]\|_{B(X)}=\sup_{\|v\|_\nu \leq 1}  \|g_v(w)-g_v(0) \|_\nu$.
By the mean value theorem for Banach spaces (cf. Theorem 1.1.1 in \cite{Hormander}),
we have that
$\|g_v(w)-g_v(0)\|_{\nu}\leq M_v \|w\|_\nu$ whenever $\|w\|_\nu\leq r^\ast$,
where $M_v:=\sup_{\|\tilde{w}\|_\nu\leq r^\ast} \|Dg_v(\tilde{w})\|_{B(X)}$.
Using the definition of the operator norm, we rewrite $M_v$ as 
\begin{align*}
    M_v&
	= \underset{\substack{\|\tilde{v}\|_\nu\leq 1\\ \|\tilde{w}\|_\nu \leq r^\ast}}{\sup}\|Dg_v(\tilde{w})\tilde{v}\|_{\nu}
	=\underset{\substack{\|\tilde{v}\|_\nu\leq 1\\ \|\tilde{w}\|_\nu \leq r^\ast}}{\sup}\|AD^2F(\bar{a}+\tilde{w})[\tilde{v},v]\|_{\nu}
	=\underset{\substack{\|\tilde{v}\|_\nu\leq 1\\ \|\tilde{w}\|_\nu \leq r^\ast}}{\sup}\|Ae^{\bar{a}+\tilde{w}}\ast \tilde{v}\ast v\|_{\nu}.
\end{align*}
We use the Banach algebra property as given in \eqref{eq:banachalgebra} to obtain, for any $v$ with $\|v\|_\nu\leq 1$,
\begin{align*}
    \|g_v(w)-g_v(0)\|_{\nu}&\leq 
		\|A\|_{B(X)} \underset{\substack{\|\tilde{v}\|_\nu\leq 1\\ \|\tilde{w}\|_\nu\leq
	 r^\ast}}{\sup}
\|e^{\bar{a}+\tilde{w}}\|_\nu \, \|\tilde{v}\|_\nu \, \|v\|_\nu \, \|w\|_\nu \leq \|A\|_{B\left(X\right)} \, e^{r^{*}}\left\|e^{\bar{a}}\right\|_\nu\|w\|_\nu .
\end{align*}
Here we have used that $e^{\bar{a}+\tilde{w}}=e^{\bar{a}} \ast e^{\tilde{w}}$
and $\|e^{\tilde{w}}\|_\nu \leq e^{\|\tilde{w}\|_\nu}$.
We note that although $A$ is described as an operator from $\hat{X}$ to $X$ in Theorem~\ref{thm:radii}, we are free to interpret it as a bounded operator from $X$ to $X$ here.
To determine $\|A\|_{B\left(X\right)} $, we compute the matrix norm 
\[
  \|A^{N^{\mathrm{Jac}}}\|_{B(X)} 
  = 
  \max_{k \in I^+_{N^{\mathrm{Jac}}}}\frac{\|A^{N^{\mathrm{Jac}}}e_k\|_{\nu}}{\|e_k\|_{\nu}},
\]
and then use that (see Lemma~\ref{lemma:lambda})
\begin{align}\label{eq:Acomp}    \|A\|_{B\left(X\right)}=\max\left\{\|A^{N^\textup{Jac}}\|_{B(X)},\lambda^{-1}_\mathrm{min}(N^\mathrm{Jac})\right\}.
\end{align}
To estimate $\bar{b}''=e^{\bar{a}}$, we
proceed as before to bound the Fourier coefficients of $\mathcal{G}'' \circ \bar{u}$, see 
Remark~\ref{remark:barbprime}.
By repeating part of the construction described in Section~\ref{sec:Ybound}, 
we obtain
\begin{align*}
    \|e^{\bar{a}}\|_\nu\leq \sum_{n\in I^+_{N^\mathrm{Alias}}}\left(|\bar{b}_n^{''\mathrm{FFT}}|+\widehat{C}''_{\bar{\rho}} \bar{\varepsilon}_n \right)\omega_n+\widehat{C}''_{\bar{\rho}}f_\mathrm{geo}\left(\frac{\nu}{\bar{\nu}},N^\mathrm{Alias}\right).
\end{align*}
Setting \begin{align*}
    \widehat{W}:=\|A\|_{B\left(X\right)} \, \left(\sum_{n\in I^+_{N^\mathrm{Alias}}}\left(|\bar{b}_n^{''\mathrm{FFT}}|+\widehat{C}''_{\bar{\rho}} \bar{\varepsilon}_n\right)\omega_n+\widehat{C}''_{\bar{\rho}}f_\mathrm{geo}\left(\frac{\nu}{\bar{\nu}},N^\mathrm{Alias}\right)\right),
    \end{align*}
we conclude that we may choose, for any fixed $r^\ast>0$,
\begin{align}\label{eq:What}
W = \widehat{W}e^{r^{*}}.
\end{align}
Finally, we remark that we have used the estimate $\| e^{\bar{a}+\tilde{w}} \|_{\nu} \leq 
\|e^{\bar{a}}\|_\nu e^{r^\ast}$ and that this convenient property of our exponential nonlinearity leads to substantially better results than the  more naive bound 
$\| e^{\bar{a}+\tilde{w}} \|_{\nu} \leq 
 e^{\|\bar{a}\|_\nu + r^\ast}$.


\section{Results}
\label{sec:results}
In this section, we prove existence of different solutions to \eqref{eq:SBeq} with boundary conditions \eqref{eq:SBBC}. As explained in the introduction, these correspond to periodic waves on an infinite strip, traveling at speed~$c$.
The codes that are used to obtain the results can be found in \cite{github}. 
All computations were carried out in MATLAB (2024a) on an Apple M1 Pro CPU with 32 GB RAM. In order to make the calculations rigorous, the interval arithmetic library Intlab~\cite{IntLab_Rump} is used.  We report floating point numbers displaying four decimals, with the precise values available in the code. At the end of this section we discuss how we found numerical approximations of solutions. We start by proving the theorem that was stated in the introduction. 
\begin{proof}[Proof of Theorem \ref{thm:examplesol}]
We choose $\nu=(1+10^{-7},1+10^{-7})$
 and $\bar{\rho}=(0.09531,0.09531)$. The reasons for this choice and the selection of the other values of the parameters in this proof are discussed in Remark \ref{rem:choiceparam}. The values of the different truncation parameters $N$ are presented in Table \ref{tab:examplesol}. 
\begin{table}
\centerline{\renewcommand{\arraystretch}{1.2}
\begin{tabular}{lllllll}
\hline
 \multicolumn{1}{|l|}{$N^\mathrm{Gal}$} & \multicolumn{1}{l|}{$N^\mathrm{Jac}$}& 
 \multicolumn{1}{l|}{$N^\mathrm{Alias}$}& \multicolumn{1}{l|}{$N^\mathrm{FFT}$}& \multicolumn{1}{l|}{$N^\mathrm{Col}$}& \multicolumn{1}{l|}{$N^\mathrm{Row}$} &
 \multicolumn{1}{l|}{$N^\mathrm{Tail}$}\\ \hline
\multicolumn{1}{|l|}{$(130,130)$} & \multicolumn{1}{l|}{$(65,65)$}& \multicolumn{1}{l|}{$(400,400)$}& \multicolumn{1}{l|}{$(1024,1024)$}& \multicolumn{1}{l|}{$(300,300)$}& \multicolumn{1}{l|}{$(800,800)$} &\multicolumn{1}{l|}{$(140,140)$}\\ \hline
\end{tabular}
}
\caption{Overview of the different values of $N$ used for proving existence of solutions to \eqref{eq:SBeq} with boundary conditions \eqref{eq:SBBC} in Theorem \ref{thm:examplesol}.}
\label{tab:examplesol}
\end{table}
The Fourier coefficients $\bar{a}$ of the approximate solution $\bar{u}$ (depicted in Figure~\ref{fig:examplesol}) can be found in the code. 
The values for the bounds are (for $r^\ast=6.3605\cdot 10^{-3}$; see Remark~\ref{rem:rstar})
\begin{align*}
    Y=2.4708 \cdot 10^{-8}, \quad Z=2.9545 \cdot 10^{-1}, \quad W=1.1042\cdot 10^2.
\end{align*}

Let $\hat{a}$ be the zero 
of $F$ obtained from Theorem~\ref{thm:radii} and let $\hat{u}$ be the corresponding solution of~\eqref{eq:SBeq} with Fourier coefficients $\hat{a}$. It then follows from the error bound in Theorem~\ref{thm:radii} that 
\begin{equation*}
    \|\hat{u}-\bar{u}\|_\infty \leq  
	 \|\hat{a}-\bar{a}\|_\nu \leq 
	 3.5069 \cdot 10^{-8}. \qedhere
\end{equation*}
\end{proof}
 
\begin{remark}\label{rem:choiceparam}
The choice of $\nu=(1+10^{-7},1+10^{-7})$ in the proof of Theorem~\ref{thm:examplesol} was made to ensure that solutions are smooth, although a straightforward bootstrap argument also allows the choice $\nu=(1,1)$. For $\bar{\nu}$ and $\bar{\rho}$, we have the relationship $\bar{\nu} = (e^{\bar{\rho}_1},e^{\bar{\rho}_2})$. The choice for $\bar{\nu}=(1.1,1.1)$ was made using experimentation, taking into account that it needs to be chosen in between $(1,1)$ and $\nu_{\bar{a}}$, where $\nu_{\bar{a}}$ is the approximate decay rate of $\bar{a}$, i.e.\ $|\bar{a}_n| \propto \nu_{\bar{a}}^{-n}$ for moderately large $n$.
Concerning the values of the different $N$ that are presented in Table \ref{tab:examplesol}, we have chosen $N^\mathrm{Gal}$ (and the other values of $N$) relatively large in order to obtain a tight error bound. 
Somewhat smaller $N$ still lead to a proof, but with an error bound that is less tight, see also the discussion below.
In general, one may want to choose the different $N$ as small as possible to keep the computation time manageable, but sufficiently large such that the proof is successful and all the requirements (e.g.~$N^\mathrm{Col}\geq N^\mathrm{Jac}$) are met. An overview of the roles of the different $N$ and the corresponding requirements can be found in Appendix \ref{app:overviewN}.
\end{remark}

\begin{remark}\label{rem:rstar}
Equation~\eqref{eq:What} provides a bound $W =\widehat{W}e^{r^\ast}$, which
implies that for a successful proof we essentially need that
$\hat{p}(r):=\frac{1}{2}\widehat{W}e^{r} r^2-(1-Z)r+Y<0$. Note that this
condition is related to the conditions in Theorem~\ref{thm:radii}, as the lower
bound on $r$ in Theorem~\ref{thm:radii} is one of the roots of 
$p(r):=\frac{1}{2}W r^2-(1-Z)r+Y$. 
To obtain a negative value for $p$ and to bring us
strictly in the formulation of Theorem~\ref{thm:radii}, we choose a value
$r^\ast$ such that $\hat{p}(r^\ast)$ is minimal (numerically). Subsequently, we check whether
for this value of $r^\ast$ and some $r<r^\ast$ it indeed holds that
$p(r)=\frac{1}{2}\widehat{W}e^{r^\ast} r^2-(1-Z)r+Y<0$. For the situation in
Theorem~\ref{thm:examplesol}, this procedure resulted in $r^\ast=6.3605\cdot
10^{-3}$.
\end{remark}

For the discussion in this section, the error bound obtained from Theorem~\ref{thm:radii} on the difference between the approximate and the true solution is denoted by $r^\mathrm{min}$. For example, in the proof of Theorem \ref{thm:examplesol} we have $r^\mathrm{min}=3.5069\cdot 10^{-8}$. Besides Theorem \ref{thm:examplesol}, we present five more existence proofs to illustrate the efficacy of the method. The results are combined in Theorem~\ref{thm:othersol}. We discuss the different solutions found after the statement of the theorem. We note that the approximate solutions in the plots in Figure \ref{fig:overviewplots} are extended to domains such that a single period is visualized. 

\begin{theorem}\label{thm:othersol}
For the different combinations of $c$, $q$ and $r^\mathrm{min}$ as presented in Table \ref{tab:overviewproofs}, there exists a solution $\hat{u}$ to \eqref{eq:SBeq} with boundary conditions \eqref{eq:SBBC} such that
$\|\hat{u} - \bar{u}\|_\infty \leq r^\mathrm{min}$.
Here $\bar{u}$ are the numerical approximations visualized in Figure \ref{fig:overviewplots}. The finitely many non-vanishing Fourier-cosine coefficients of $\bar{u}$ can be found in the code at~\cite{github}. 
\end{theorem}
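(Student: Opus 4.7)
The plan is to apply the same Newton-Kantorovich framework (Theorem \ref{thm:radii}) that was used in the proof of Theorem \ref{thm:examplesol}, repeating the procedure independently for each of the parameter combinations $(c,q)$ in Table \ref{tab:overviewproofs}. For each case, I would first compute a numerical approximation $\bar{a} \in \Pi^{N^{\mathrm{Gal}}}\chi$ by running Newton's method on the truncated system $\Pi^{N^{\mathrm{Gal}}} F(\bar{a}) \approx 0$. For parameters close to those of Theorem \ref{thm:examplesol}, numerical continuation from the already-computed approximation is the natural way to generate a seed; for qualitatively distinct profiles (different numbers of bumps, different transverse structure) a different initial guess is needed, for instance produced by perturbing a linear eigenmode of $\Delta^2 + c^2 \partial_{x_1}^2 + 1$ or by using profiles from the simulations of \cite{SB_horak} as warm starts.

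Next, for each approximation I would choose the weight vector $\nu$ slightly above $(1,1)$ (to obtain smoothness while keeping the weighted $\ell^1$-norm tight), and pick $\bar{\rho}$ so that $\bar{\nu} = (e^{\bar{\rho}_1},e^{\bar{\rho}_2})$ lies between $\nu$ and the approximate geometric decay rate of $\bar{a}$, as discussed in Remark \ref{rem:choiceparam}. The truncation parameters $N^{\mathrm{Gal}}, N^{\mathrm{Jac}}, N^{\mathrm{Alias}}, N^{\mathrm{FFT}}, N^{\mathrm{Col}}, N^{\mathrm{Row}}, N^{\mathrm{Tail}}$ would then be selected subject to the constraints collected in Appendix \ref{app:overviewN} and the schematic relations in Remark \ref{rem:5blocks}, balancing tightness of the bounds against computational cost. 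With these choices fixed, I would use Intlab to rigorously compute $\widehat{C}_{\bar{\rho}}$, $\widehat{C}'_{\bar{\rho}}$, $\widehat{C}''_{\bar{\rho}}$ via the discrete Fourier method of Appendix \ref{app:calculationC}, enclose the Fourier coefficients $\bar{b}, \bar{b}', \bar{b}''$ via \eqref{eq:bncompute} and \eqref{eq:bnprimecompute}, construct the operator $A$ from \eqref{eq:defA}, and finally evaluate the bounds $Y$, $Z$, $W$ following Sections \ref{sec:Ybound}--\ref{sec:Wbound}.

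To conclude, for each case I would verify the radii polynomial inequalities $Z < 1$ and $2YW < (1-Z)^2$ of Theorem \ref{thm:radii}, with the parameter $r^\ast$ chosen as in Remark \ref{rem:rstar} to minimize the quadratic obstruction, and then extract the smallest admissible radius as the value of $r^{\mathrm{min}}$ reported in Table \ref{tab:overviewproofs}. The estimate $\|\hat{u} - \bar{u}\|_\infty \leq \|\hat{a} - \bar{a}\|_\nu \leq r^{\mathrm{min}}$, valid for any $\nu \geq (1,1)$, then yields the claimed $L^\infty$ bound on the true solution.

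The main obstacle I anticipate is the scaling of the bounds with the amplitude of the wave. As $c$ decreases the amplitude of the localized solutions grows (as recalled in the introduction), and hence the values of $e^{\bar{u}}$ and its derivatives blow up. This inflates $\widehat{C}_{\bar{\rho}}$, $\widehat{C}'_{\bar{\rho}}$, $\widehat{C}''_{\bar{\rho}}$, which in turn inflates $Y$, $Z$ and $W$ and forces $N^{\mathrm{FFT}}$ to grow substantially to keep the aliasing error under control. For the more extreme rows of Table \ref{tab:overviewproofs}, this can push memory and runtime to the edge of what is tractable, and it is precisely here that the flexibility to tune $N^{\mathrm{Gal}}$, $N^{\mathrm{Jac}}$, $N^{\mathrm{Col}}$, $N^{\mathrm{Row}}$, $N^{\mathrm{Tail}}$ independently---rather than setting them all equal---becomes essential to bring the computer-assisted proof within practical range.
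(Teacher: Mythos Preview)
Your proposal is correct and follows essentially the same approach as the paper: the authors simply state that the proofs are entirely analogous to that of Theorem~\ref{thm:examplesol}, with the specific parameter choices (for $\nu$, $\bar{\rho}$, and the various $N$) deferred to the code at~\cite{github}. Your elaboration of the steps---numerical approximation, choice of weights and truncation parameters, rigorous enclosure of the nonlinear terms, evaluation of $Y$, $Z$, $W$, and verification of the radii polynomial inequalities---is exactly the procedure the paper invokes, spelled out in more detail than the paper itself provides.
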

\begin{proof}
The proofs of these solutions are entirely analogous to the proof of Theorem~\ref{thm:examplesol} presented above.
The values of the parameters not presented in Table \ref{tab:overviewproofs}, such as the values of $\nu$, $\bar{\rho}$ and the values of the other $N$, can be found in the code at~\cite{github}.
\end{proof}

\begin{figure}
\vspace{-2\baselineskip}
\begin{subfigure}[t]{0.45\textwidth}
\includegraphics[width=\linewidth]{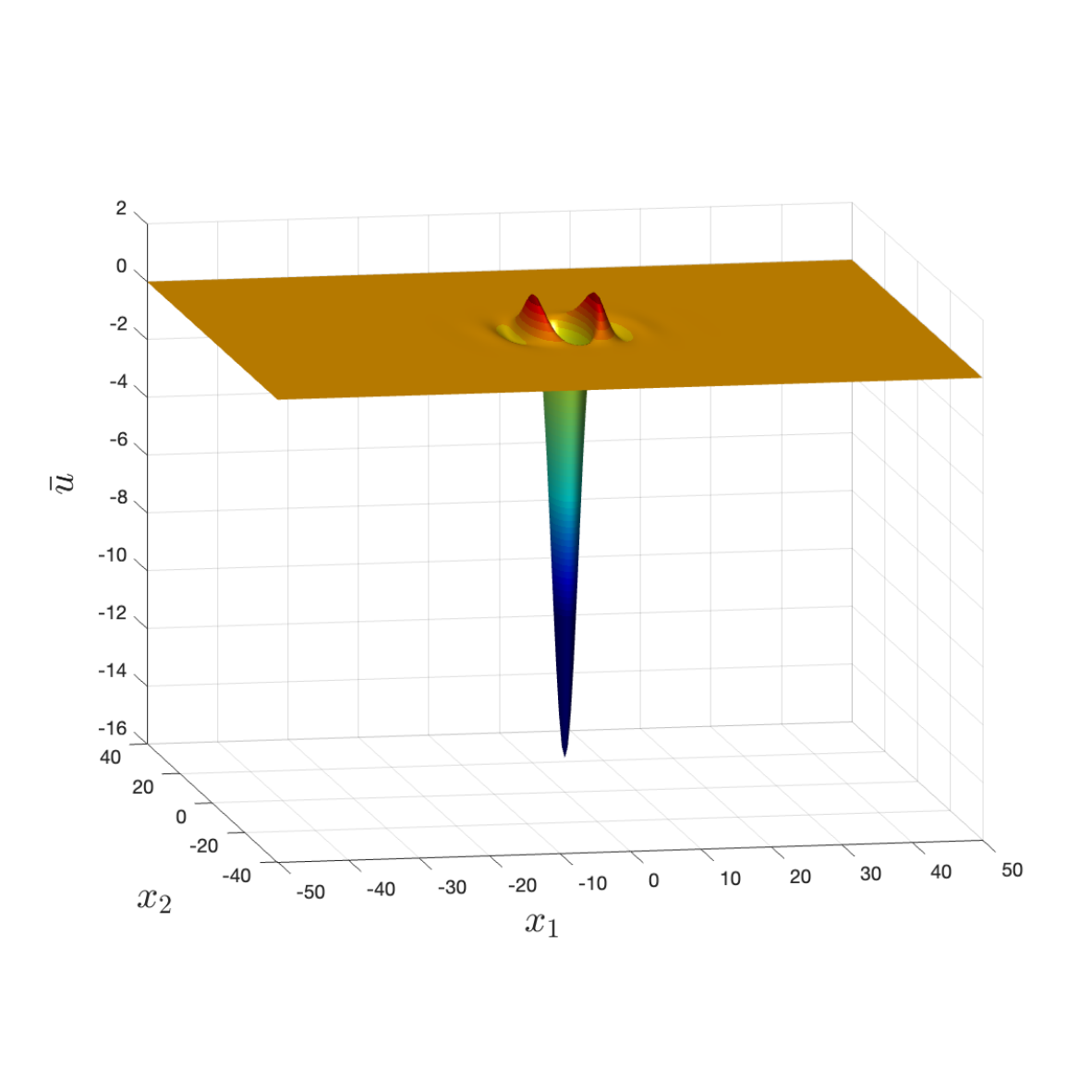}
\vspace{-2\baselineskip}
\caption{Approximate solution with fewer modes for $c=1.1$.} \label{fig:a}
\vspace{-2\baselineskip}
\end{subfigure}\hspace*{\fill}
\begin{subfigure}[t]{0.45\textwidth}
\includegraphics[width=\linewidth]{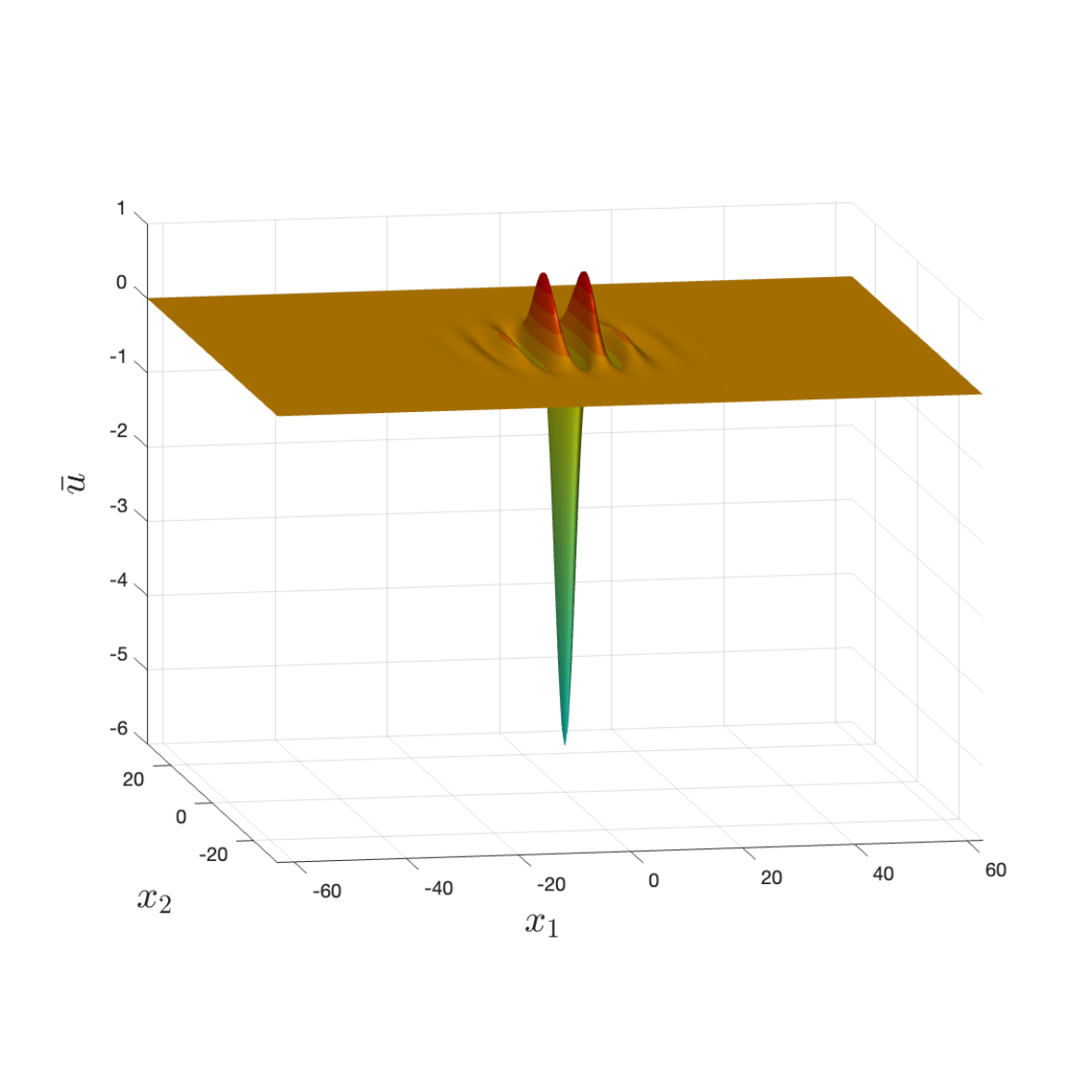}
\vspace{-2\baselineskip}
\caption{Approximate one peak solution for $c=1.3$.} 
\vspace{-2\baselineskip}\label{fig:b}
\end{subfigure}
\begin{subfigure}[t]{0.45\textwidth}
\includegraphics[width=\linewidth]{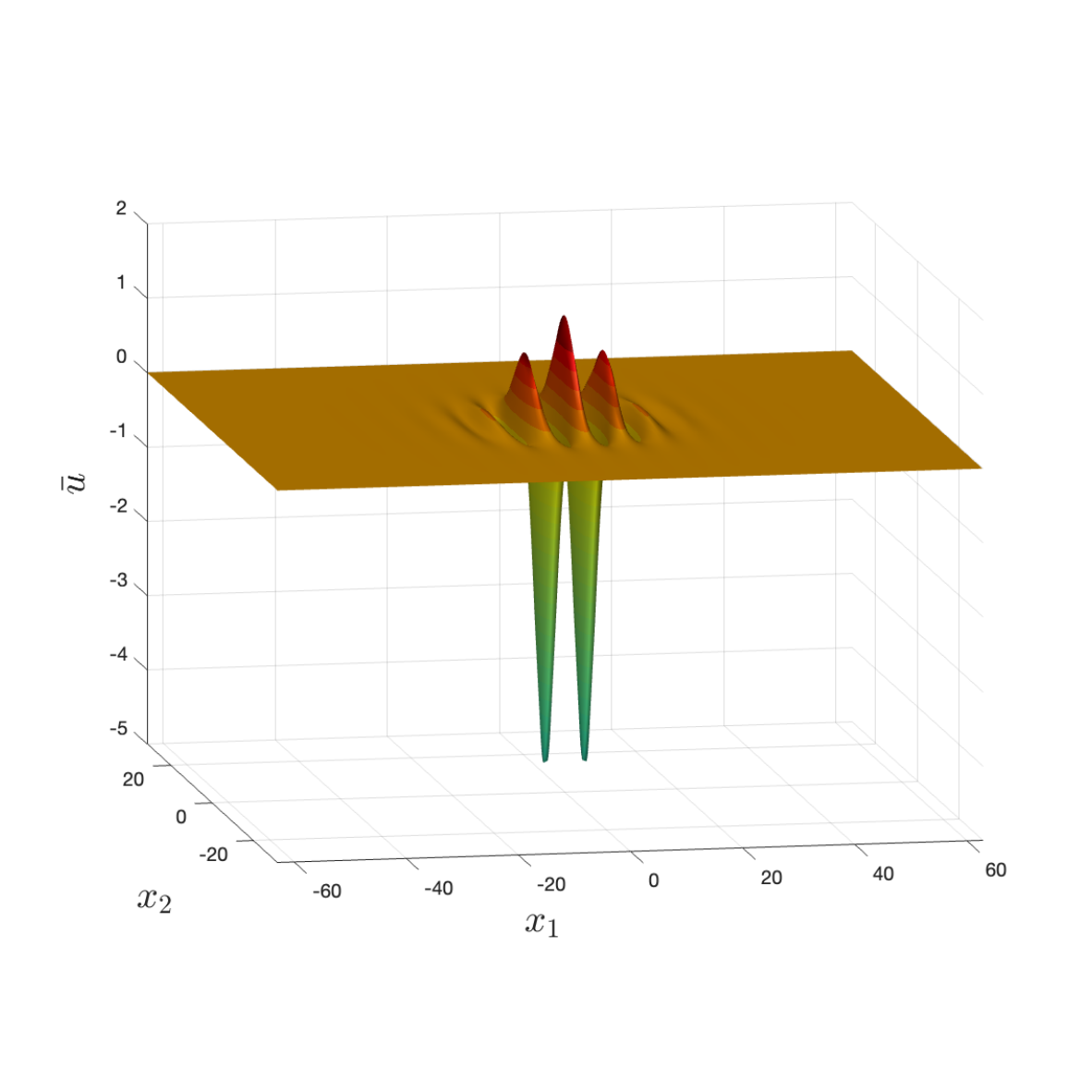}
\caption{Approximate two peak solution for $c=1.3$.} 
\vspace{-2\baselineskip}
\label{fig:c}
\end{subfigure}\hspace*{\fill}
\begin{subfigure}[t]{0.45\textwidth}
\includegraphics[width=\linewidth]{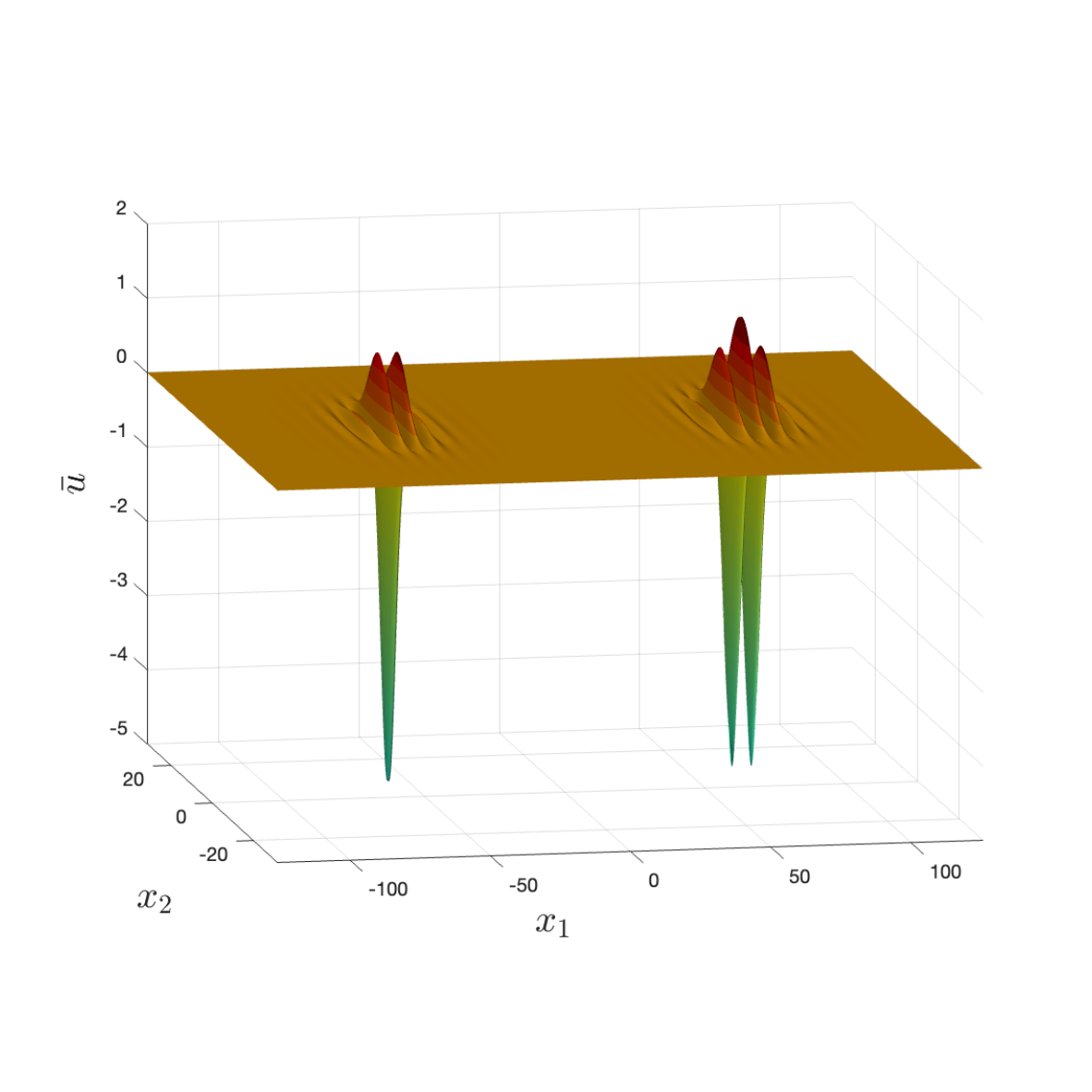}
\caption{Approximate combination solution for $c=1.3$.} \vspace{-2\baselineskip}
\label{fig:d}
\end{subfigure}
\begin{subfigure}[t]{0.45\textwidth}
\includegraphics[width=\linewidth]{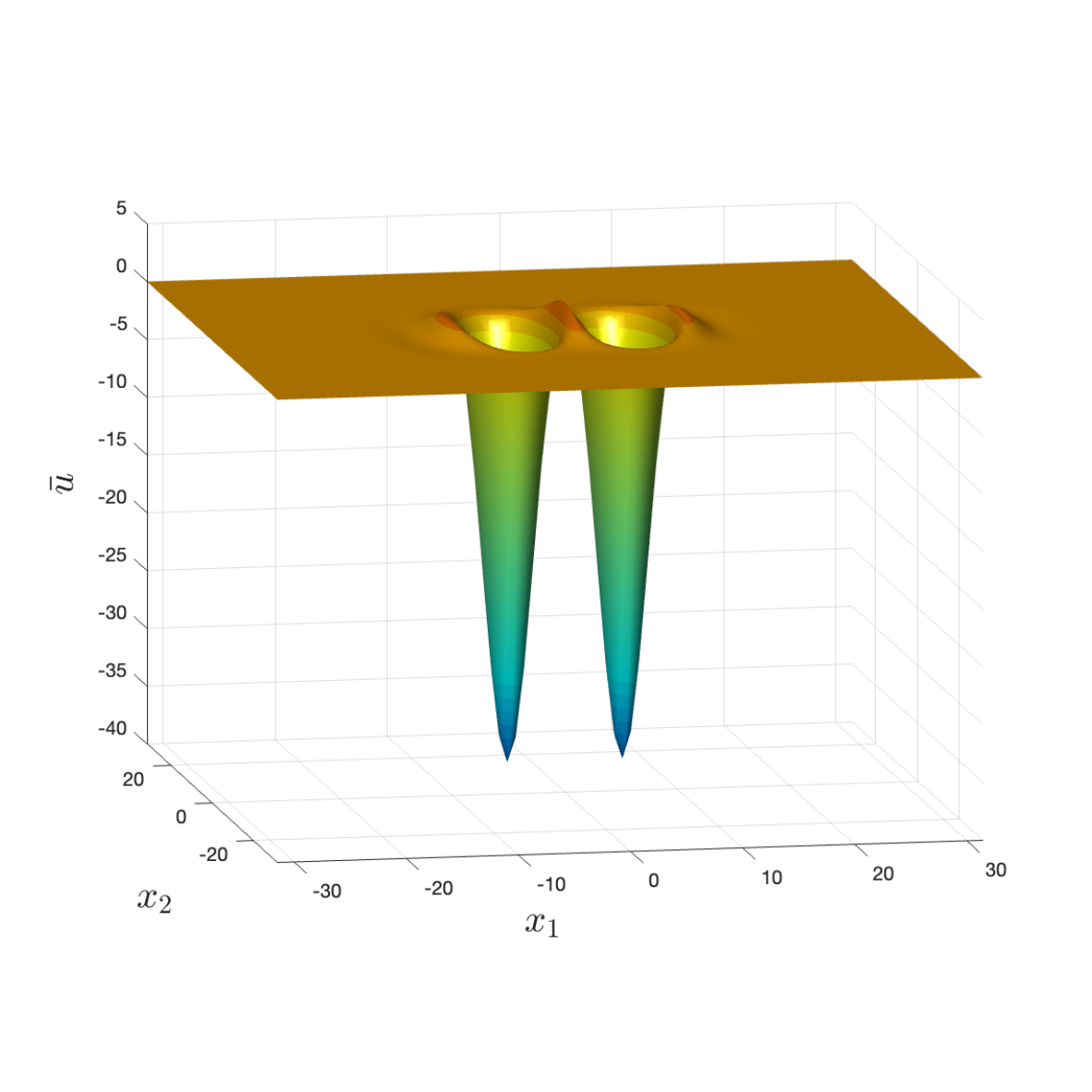}
\vspace{-2\baselineskip}
\caption{Approximate solution for low wave speed $c=0.9$.}
\label{fig:e}
\end{subfigure}\hspace*{\fill}
\begin{subfigure}[t]{0.45\textwidth}
\includegraphics[width=\linewidth]{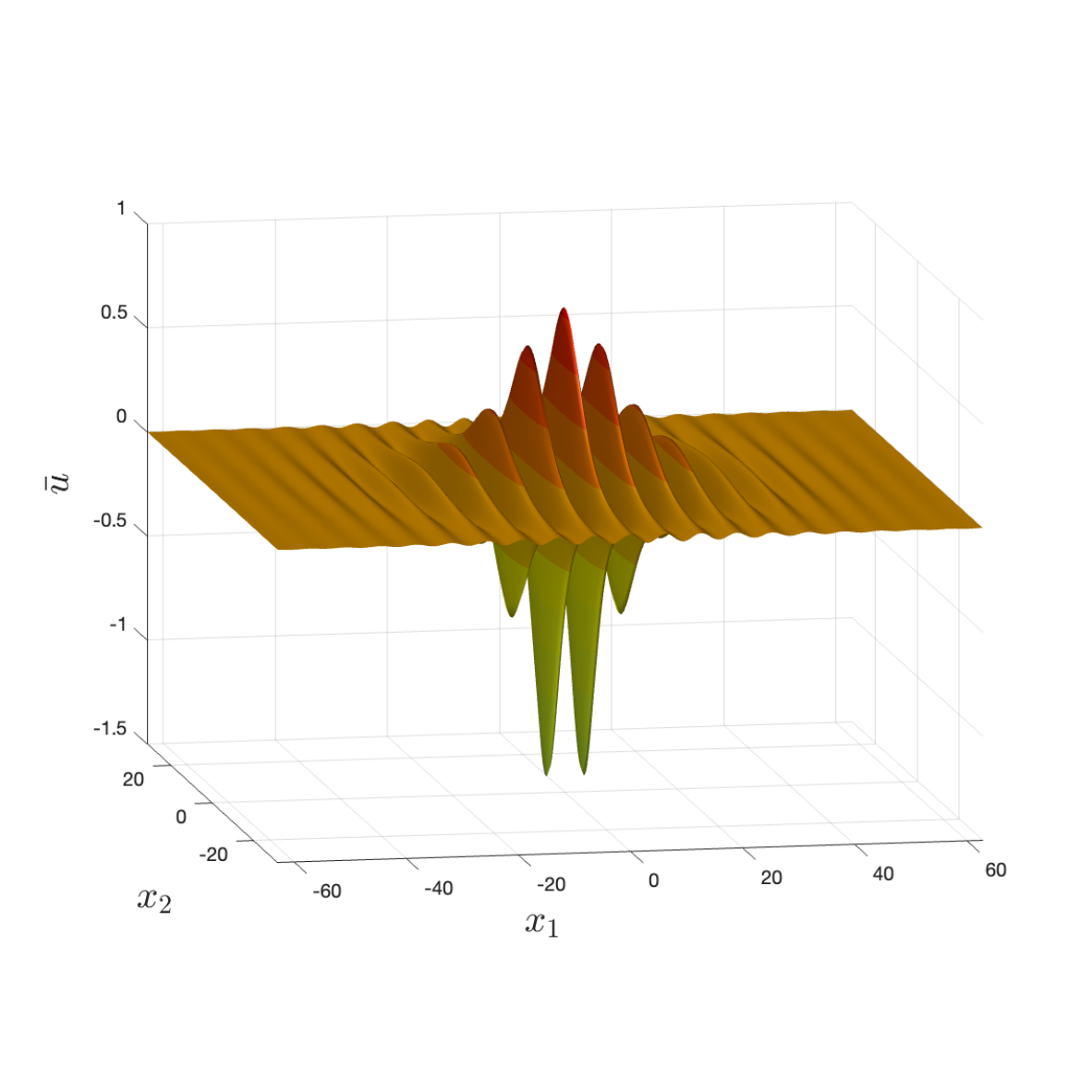}
\vspace{-2\baselineskip}
\caption{Approximate solution for high wave speed $c=1.4$.} \label{fig:f}
\end{subfigure}
\caption{Overview of approximate solutions to \eqref{eq:SBeq} with boundary conditions \eqref{eq:SBBC}, which can be verified using the code in \cite{github}.} 
\vspace{-2\baselineskip}\label{fig:overviewplots}
\end{figure}

\begin{table}[]
\centering
\renewcommand{\arraystretch}{1.2}
\begin{tabular}{|l|l|l|l|l|r|}
\hline
\multicolumn{1}{|c|}{\textbf{Figure} } &
\multicolumn{1}{|c|}{\bm{$N^\mathrm{Gal}$}} &
\multicolumn{1}{|c|}{\bm{$c$}} &
\multicolumn{1}{|c|}{\bm{$q$}} &
\multicolumn{1}{|c|}{\bm{$r^{\min}$}} &
\multicolumn{1}{|c|}{\textbf{time (s)}}
\\ \hline
   \ref{fig:a}      &   $(65,65)$                        &    $1.1$              &   $(0.0628,0.0785)$  &          $3.5069 \cdot 10^{-8}$   &     $7979$    \\ \hline
          \ref{fig:b}    &     $(60,20)$          &     $1.3$         &   $(0.05,0.1)$               &        $2.0794\cdot 10^{-3}$ & $352$  \\ \hline
        \ref{fig:c}                       &     (100,60)   & $1.3$     &    $(0.05,0.1)$              &    $2.1378\cdot 10^{-6}$ &    $4906$                   \\ \hline
        \ref{fig:d}          &    $(200,49)$     &    $1.3$          &       $(0.025,0.1)$           &  $4.2604\cdot10^{-6}$       &    $44330 $             \\ \hline
        \ref{fig:e}         &   $(60,60) $          &        $0.9$      &    $(0.1,0.1)$              &  $2.7351\cdot 10^{-5}$    &        $42935$              \\ \hline
          \ref{fig:f}        &         $(80,40)$     &        $1.4$      &   $(0.05,0.1)$               &   $4.2865\cdot 10^{-6}$&   $4292$                     \\ \hline
\end{tabular}
\caption{Overview of the parameters $N^\mathrm{Gal}$, $c$ and $q$, the difference $r^{\min}$ between the approximate and true solution, and the computation time for the different (approximate) solutions plotted in Figure \ref{fig:overviewplots}.}
\label{tab:overviewproofs}
\end{table}

Figure~\ref{fig:a} essentially reproduces the profile from Figure~\ref{fig:examplesol}. In Figures~\ref{fig:b}-\ref{fig:d} we depict three (approximate) solutions for wave speed $c=1.3$, which is the same value as was used in~\cite{wave_breuer} for results in one dimension. The frequencies $q$ are selected such that the solutions are convincingly localized relative to the domain size. 
In Figures~\ref{fig:b} and~\ref{fig:c} we showcase periodically localized solutions with a single peak and a double peak, respectively. We prove that we can also find a traveling wave solution that combines the one- and two-peak patterns, as shown in Figure~\ref{fig:d}. 
Note that the first component of the frequency corresponding to Figure~\ref{fig:d} is half of the ones in Figures~\ref{fig:b} and~\ref{fig:c}. 
One may conjecture that solutions with more peaks also exist, as well as many combinations of multi-peak patterns, but we leave this for future investigation. 

While an in-depth analysis of automated or near-optimal choices of $N$ is beyond the scope of the present paper, we highlight some features of the presented sample results. For the solution in Figure~\ref{fig:b}, 
the number of Fourier modes are chosen relatively low to minimize computation time. For that purpose, it is especially important to minimize $N^\mathrm{Col}$ and, to a lesser but still significant extent, $N^\mathrm{Jac}$, as they heavily influence the computation time. The flip side of the coin is that the error bound $r^\mathrm{min}$ for this solution is comparatively large, although it is straightforward and computationally not too costly to decrease $r^\mathrm{min}$ by increasing $N^\mathrm{Gal}$ (and $N^\mathrm{Alias}$).
For the rest of the figures we have chosen more Fourier modes than necessary in order to obtain tighter bounds at higher computational cost.

Next, in Figures~\ref{fig:e} and~\ref{fig:f} we illustrate the influence of the wave speed on the size of the periodically localized pattern. For relatively low wave speed $c=0.9$ the amplitude is large (Figure~\ref{fig:e}), whereas for relatively large wave speed $c=1.4$ the amplitude is much smaller and the wave is markedly less localized, see Figure~\ref{fig:f}. For Figure~\ref{fig:e} we choose relatively large $q=(0.1,0.1)$ as at this low wave speed the pattern is more localized than at higher wave speeds.
In general, proving solutions with a lower wave speed requires more modes, as they are needed to approximate the rather spiky solution accurately.

The approximate solutions leading to the rigorously verified solutions corresponding to Figures~\ref{fig:a}-\ref{fig:f} were obtained using Newton's method. Finding a suitable initial guess for Newton's method is often a nontrivial task. A systematic approach for obtaining such guesses is provided by the mountain pass algorithm (see \cite{SB_horak}). However, implementing this algorithm can be quite challenging in practice. Therefore, we adopted an alternative strategy. As a starting point, we constructed initial guesses by extending one-dimensional solutions to the ODE~\eqref{eq:1DSB}, which are easier to find, into two dimensions. Specifically, these one-dimensional solutions were embedded into a two-dimensional domain and multiplied by a smooth bump function with compact support in the $X_2$-direction. To obtain convergence to nontrivial two-dimensional solutions, we added random perturbations to these functions before applying Newton's method. After some trial and error we obtained a nontrivial solution of the type in Figures~\ref{fig:a} and~\ref{fig:b}: a pattern with one large negative peak. Once we found one solution, we used continuation in $c$ to compute solutions of the same type for different wave speeds. By continuing in $c$ up to approximately $c=\sqrt{2}$, we obtained a small amplitude solution $u_{\text{small}}$. In this parameter range, $-u_{\text{small}}$ is almost a solution as well, but this one has a different structure: it features two large negative peaks. Starting with $-u_{\text{small}}$ and decreasing $c$, we arrived at the solutions shown in Figures~\ref{fig:c}, \ref{fig:e} and~\ref{fig:f}. Finally, to construct a third type of solution, we combined these two types of patterns by placing them side by side, sufficiently far apart, and then we applied Newton iterations. This procedure led to the pattern shown in Figure~\ref{fig:d}.

\section{An alternative method: power series}\label{sec:powerseries}

Besides the procedure described in the main part of this paper, we explored another way of proving solutions to the suspension bridge equation \eqref{eq:SBeq} with boundary conditions \eqref{eq:SBBC}. This alternative method is based on the power series expansion of the exponential function
\begin{align}\label{eq:defexp}
    e^u-u-1=\sum_{n=2}^\infty \frac{u^n}{n!}.
\end{align}
By substituting \eqref{eq:defexp} in \eqref{eq:SBeq},
we end up with the zero-finding problem
\begin{align*}
    F_n(a):=\lambda_na_{n}+ \sum_{k=2}^\infty \frac{(\overbrace{a\ast \dots \ast a}^k)_n}{k!}=0 \qquad\text{for all } n\in \mathbb{N}^2,
\end{align*}
where $\lambda_n$ is as in \eqref{eq:deflambda}. Again, we choose $X=\ell^1_\nu$ for our Banach space. 
Fixing some $M \in \mathbb{N}$, we split the series in $F$ in two parts, namely $F:=F^{(M)}+F^{(\infty)}$, where, for $n\in \mathbb{N}^2$,
\begin{equation*}
    F_n^{(M)}({a}) := 
    \lambda_n{a}_n+\sum_{k=2}^M\frac{(\overbrace{{a}\ast \dots \ast{a}}^k)_n}{k!}, \qquad    F^{(\infty)}_n({a}) := \sum_{k=M+1}^\infty\frac{(\overbrace{{a}\ast \dots \ast{a}}^k)_n}{k!}.
\end{equation*}
We can compute all nonzero components of $F_n^{(M)}(\bar{a})$ for fixed $M$ since $F_n^{(M)}(\bar{a})$ vanishes for 
$n \notin I^+_{M N^{\mathrm{Gal}}}$. 
The norm of the tail term $F_n^{(\infty)}(\bar{a})$ we can estimate by
\begin{align}\label{eq:Finf}     \|F^{(\infty)}(\bar{a})\|_{\nu}&=\left\Vert\sum_{k=M+1}^\infty\frac{(\overbrace{\bar{a}\ast \dots \ast \bar{a}}^k)}{k!}\right\Vert_{\nu}\leq \sum_{k=M+1}^\infty\frac{\|\bar{a}\|_{\nu}^k}{k!}=e^{\|\bar{a}\|_{\nu}}-\sum_{k=0}^M\frac{\|\bar{a}\|_{\nu}^k}{k!},
\end{align} 
where the inequality follows from the triangle inequality and the Banach algebra property \eqref{eq:banachalgebra}. Since $\bar{a}$ only has finitely many nonzero terms, we can compute $\|\bar{a}\|_\nu$, hence the right-hand side of \eqref{eq:Finf} can be calculated.

Using this splitting of $F$, all bounds in Theorem \ref{thm:radii} can be computed explicitly. For example, considering the $Y$-bound, instead of computing $\|AF(\bar{a})\|_{\nu}$, we use the triangle inequality to obtain
\begin{align}\label{eq:AFsplit}
    \|AF(\bar{a})\|_{\nu}&\leq \|AF^{(M)}(\bar{a})\|_{\nu}+\|AF^{(\infty)}(\bar{a})\|_{\nu}\nonumber \\
    &\leq\|AF^{(M)}(\bar{a})\|_{\nu}+\|A\|_{B\left(X\right)}\left(e^{\|\bar{a}\|_{\nu}}-\sum_{k=0}^M\frac{\|\bar{a}\|_{\nu}^k}{k!}\right).
\end{align}
We can compute $\|AF^{(M)}(\bar{a})\|_{\nu}$ since $AF^{(M)}(\bar{a})$ has only finitely many nonzero terms (essentially following the same argument as for $F^{(M)}(\bar{a})$). The second term in \eqref{eq:AFsplit} involves $\|A\|_{B\left(X\right)}$, which we can compute using \eqref{eq:Acomp}, and the right-hand side of \eqref{eq:Finf}, which we also can compute, as explained earlier.

\begin{figure}[tb]
    \centering
    \includegraphics[width=0.5\textwidth]{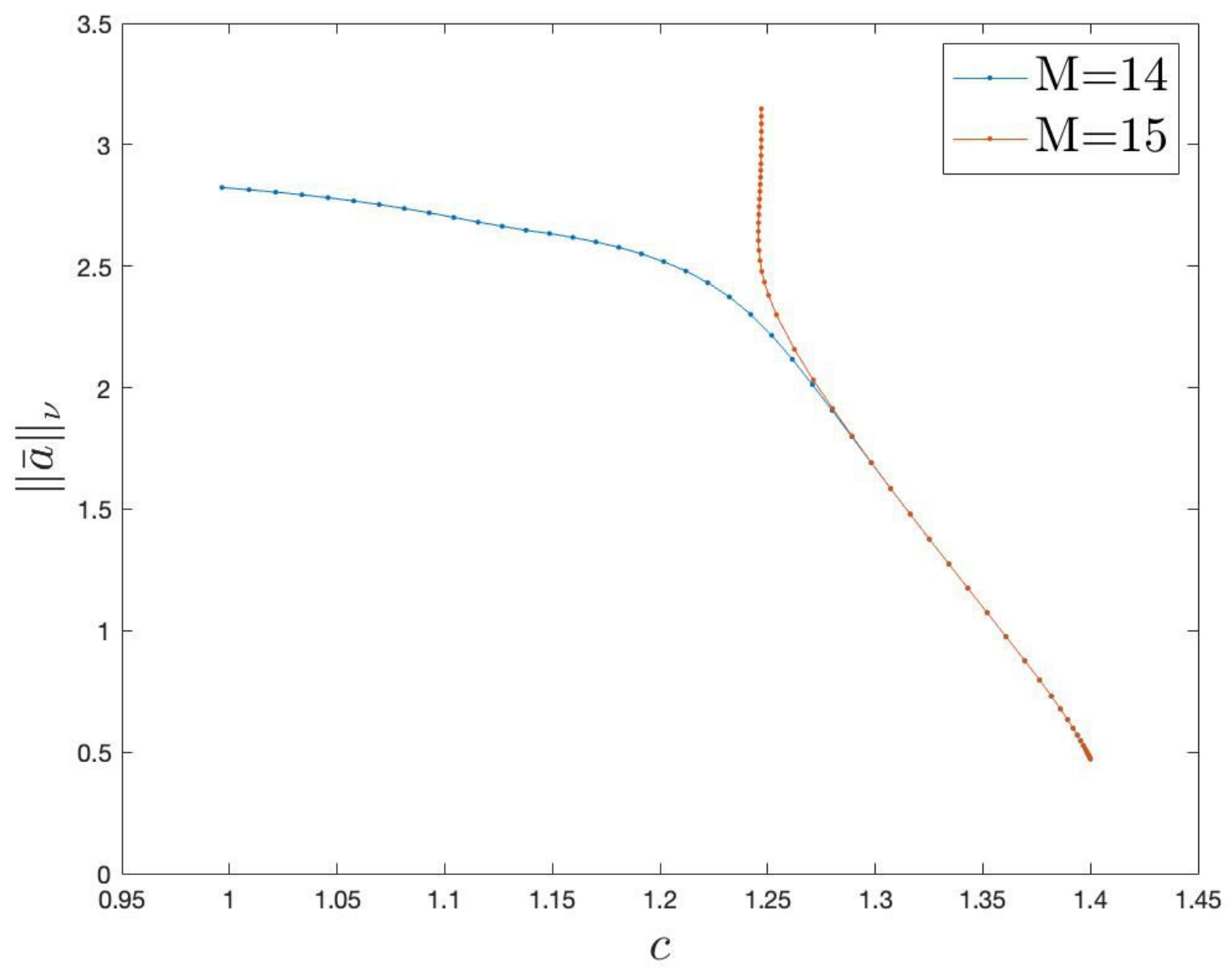}
    \caption{Influence of the parity of $M$. The code used to create this plot can be found in \cite{github}.}
    \label{fig:arccont}
\end{figure}

The computation of the $Z$-bound is similar to the $Y$-bound. First, $DF$ is split into $DF^{(M)}$ and $DF^{(\infty)}$. The operator norm of the polynomial term $DF^{(M)}(\bar{a})$ is then evaluated by computer, whereas the norm of $DF^{(\infty)}(\bar{a})$ is estimated using the Banach algebra property. For the $W$-bound, we use the naive bound described in Section \ref{sec:Wbound}, i.e., $W=\|A\|_{B(X)}e^{\|\bar{a}\|_\nu+r^\ast}$.

This method is more straightforward to understand and implement in comparison with the technique using the discrete Fourier transform and the aliasing error bound in the previous sections. The method described in this section works well for wave speed values close to $\sqrt{2}$, where the amplitude of the solution is small and the Taylor expansion thus converges quickly. Approximating the solution becomes increasingly difficult for smaller values of the wave speed~$c$, as the amplitude of the wave increases. In order to illustrate this, we created Figure \ref{fig:arccont} using numerical continuation. We depict (the norm of) the numerical approximation of a solution, where we performed the numerics with $F$ replaced by $F^{(M)}$ for both $M=14$ and $M=15$. Since the results are completely divergent for $c$ smaller than roughly $1.25$, for such values of~$c$ we know that the numerical zeros of $F^{(M)}$ are in fact not good approximations of a zero of~$F$. Increasing~$M$ further did not improve the situation. Clearly then, approximating $F$ by a Taylor polynomial~$F^{(M)}$, which is essentially what we try to do in this section, already fails at the numerics level, hence attempting a proof based on the Taylor polynomial approximation will be futile. 

We conclude that for $c \gtrsim 1.3$ the alternative approach in this section works nicely and it may even be preferable when studying solutions bifurcating from the uniform state. However, for smaller $c$, and corresponding larger wave profiles, the method based on the FFT is superior. Moreover, that method extends directly to analytic nonlinearities other than the exponential, and hence is more versatile in that respect as well.

\newpage

\appendix
\section{Overview of computational parameters $N$}\label{app:overviewN}
An overview of the different computational parameters $N$ used throughout this paper is presented here.
\begin{itemize}
\item $N^\mathrm{Gal}$:
 As we can only compute finitely many Fourier coefficients, we choose a truncation dimension $N^\mathrm{Gal}$. This   determines how many (nonzero) Fourier coefficients we consider in our numerical approximation. Hence, we have that $\bar{a}_n=0$ for $n \notin I^+_{N^\mathrm{Gal}}$.

\item $N^\mathrm{Jac}$:
Using the computer, we calculate a finite part 
$D \Pi^{N^\mathrm{Jac}}{F}\left(\Pi^{N^\mathrm{Jac}}\bar{a}\right)$
 of the full Jacobian. 
Increasing  $N^\mathrm{Jac}$ can lead to a lower value for the $Z$-bound, which in turn increases the chances of obtaining a successful proof. Typically $N^\mathrm{Jac}=N^\mathrm{Gal}$ is chosen in implementations (see Remark \ref{rem:Njac}), but this is not required. To solve our two-dimensional suspension bridge equation, we require that $N^\mathrm{Jac}$ satisfies the two inequalities in~\eqref{e:Njacmin}.

\item $N^\mathrm{Alias}$:
We construct two methods for enclosing $\bar{b}_n = G_n(\bar{a})$. One of them can be used for general $n\in \mathbb{Z}^2$ (see Lemma \ref{lemma:boundb}) and the other one for $n \in I_{N^\mathrm{Alias}}$, which provides a tighter enclosure (at additional computational cost),  see Lemma \ref{lemma:alias}. Hence, $N^\mathrm{Alias}$ determines from which point on we use the general bound. For implementation purposes, we set $N^{\mathrm{Alias}} \geq N^{\mathrm{Jac}}$. We use the same choice of $N^{\mathrm{Alias}}$ for $\bar{b}$, $\bar{b}'$ and $\bar{b}''$.

\item $N^\mathrm{FFT}$: 
When applying the discrete (inverse) Fourier transformations we use $2N^\mathrm{FFT}$ discretization points or modes. In particular, this determines the size (and values) of $\bar{b}_n^\mathrm{FFT}$ in \eqref{eq:aliasingerror}. We require that both $N_1^\mathrm{FFT}$ and $N_2^\mathrm{FFT}$ are powers of $2$ for algorithmic reasons.
 Furthermore, we require 
 $N^\mathrm{FFT} \gg N^\mathrm{Alias}$ to get tight enclosures on $\bar{b}_n$ for $n\in I^+_{N^{\mathrm{Alias}}}$.

\item $N^\mathrm{Col}$:
We want to compute the norm $\|I-ADF(\bar{a})\|_{B(X)}$ in Section~\ref{sec:Zbound}. For that purpose, we need to estimate $\|[I-ADF(\bar{a})]e_k\|_\nu$ for every $k$. We have two method for this, one for ``small'' $k$ and one for ``large'' $k$. The switching point is at $N^\mathrm{Col}$. We require that $N^\mathrm{Col}\geq N^\mathrm{Jac}$ for the analysis in Section \ref{sec:Zboundsmalln}. 

\item $N^\mathrm{Row}$: 
To evaluate $\|[I-ADF(\bar{a})]e_k\|_\nu$, we analyze $\left([I-ADF(\bar{a})]e_k\right)_n$. As explained in the previous bullet point in this list, we have two ways of analyzing this depending on whether $k\in I_{N^\mathrm{Col}}$ or $k\notin I_{N^\mathrm{Col}}$. Likewise, we have two ways of analyzing $\left([I-ADF(\bar{a})]e_k\right)_n$ depending on whether $n$ is ``small'' or ``large'' and the switching point is at $N^\mathrm{Row}$. 
We require that $N^\mathrm{Row}\geq N^\mathrm{Jac}$ for the analysis in Section \ref{sec:Zboundlargen}.

\item $N^\mathrm{Tail}$: 
When we estimate $\left([I-ADF(\bar{a})]e_k\right)_n$ uniformly for large $k$,
we identify an intermediate range of values of $n$ where we can use computational power to improve on the general estimate that holds for any large $n$. This intermediate range is denoted by $N^\mathrm{Tail}$ and we require that $N^\mathrm{Jac} \leq N^\mathrm{Tail} \leq N^\mathrm{Col}$.
\end{itemize}

\section{Estimation of the constant in Lemma \ref{lemma:boundb}}\label{app:calculationC}
In this appendix, a lemma is proven stating an efficient procedure, based on
the interval-valued discrete Fourier transform, for rigorously estimating the
constant $C_{\bar{\rho}}$ defined in \eqref{eq:defCrho}.
Recalling \eqref{eq:defC}, we can use these constants to estimate the constant
$\widehat{C}_{\bar{\rho}}$ in Lemma \ref{lemma:boundb} for bounding
$\bar{b}_n$. Before stating the lemma and its proof, we introduce some notation.

As in Section~\ref{sec:alias} we choose a discrete Fourier transform with grid dimensions $N^{\mathrm{FFT}} \in \mathbb{N}^2$.
It is not necessary that the choice is the same as in Section~\ref{sec:alias}, although it is in our code. Assuming that $\bar{a}_n$ vanishes for $n \notin I_{N^{\mathrm{Gal}}}$, in this section we require that $N^{\mathrm{FFT}} > N^{\mathrm{Gal}}$, which is a different condition than the one in Section~\ref{sec:alias}.
We recall the notation
\[
J_{N^\mathrm{FFT}} = \{-N_1^{\mathrm{FFT}}\leq n_1 \leq N_1^{\mathrm{FFT}}-1,-N_2^{\mathrm{FFT}}\leq n_2 \leq N_2^{\mathrm{FFT}}-1\},
\] 
and the uniform mesh (of the square $[-\pi,\pi] \times [-\pi,\pi]$).
\begin{align*}
    x_{1,k_1}=\frac{\pi k_1}{N_1^{\mathrm{FFT}}}, \quad   x_{2,k_2}=\frac{\pi k_2}{N_2^{\mathrm{FFT}}} \qquad\qquad \text{ for } k \in J_{N^\mathrm{FFT}}.
\end{align*}
Let $\square=\square_1 \times \square_2$ be the small rectangle with sides 
$\square_j=[0, \frac{\pi}{N_j^{\mathrm{FFT}}}]$, $j=1,2$. 
Then for $\delta \in \square$ we have, since $I_{N^{\mathrm{Gal}}}\subset J_{N^\mathrm{FFT}}$, that
\begin{align}
   \bar{u}(x_k+\delta-i\bar{\rho}) & =  \sum_{n \in J_{N^\mathrm{Gal}}} \bar{a}_{n} e^{i (n_1 (x_{1,k_1}+\delta_1-i\bar{\rho}_1)+n_2 (x_{2,k_2}+\delta_2-i\bar{\rho}_2)} \nonumber \\
   & =  \sum_{n \in J_{N^\mathrm{FFT}}} \bar{a}_{n} e^{n_1  \bar{\rho}_1+n_2 \bar{\rho}_2} e^{i (n_1  \delta_1 + n_2  \delta_2)}
   e^{i (n_1  x_{1,k_1} + n_2  x_{2,k_2})}. \label{eq:interpretfft1}
\end{align}
The right-hand side in~\eqref{eq:interpretfft1} can be interpreted as the discrete Fourier transform of 
$\bar{a}^{\bar{\rho},\delta}$, where 
\[
   \bar{a}^{\bar{\rho},\delta}_n := \bar{a}_{n} e^{n_1  \bar{\rho}_1+n_2 \bar{\rho}_2} e^{i (n_1  \delta_1 + n_2  \delta_2)}.
\]  
By introducing the corresponding interval-valued sequence
\[
   \bar{a}^{\bar{\rho},\square}_n := \bar{a}_{n} e^{n_1  \bar{\rho}_1+n_2 \bar{\rho}_2} e^{i (n_1  \square_1 + n_2  \square_2)},
\]  
and its interval-valued discrete Fourier transform (evaluated efficiently using interval arithmetic)
\[
  \bar{u}^{\bar{\rho},\square}_k 
  := \sum_{n \in J_{N^\mathrm{FFT}}} \bar{a}^{\bar{\rho},\square}_n  
   e^{i (n_1  x_{1,k_1} + n_2  x_{2,k_2})}, 
\]
we then have that 
\begin{align}\label{eq:fftbaru}
   \bar{u}(x_k+\delta-i\bar{\rho})  \in  \bar{u}^{\bar{\rho},\square}_k 
   \qquad \text{for all } \delta\in \square \quad\text{and}\quad k \in J_{N^\mathrm{FFT}}.
\end{align}
We are now ready to state the lemma.
\begin{lemma}
    The constant $C_{\bar{\rho}}$ defined in \eqref{eq:defCrho} can be estimated via 
\begin{align*}
        C_{\bar{\rho}}\leq 
\textup{upperbound} \left(\frac{1}{4N_1^{\mathrm{FFT}}N_2^{\mathrm{FFT}}}\sum_{k \in J_{N^\mathrm{FFT}}} |\mathcal{G}(\bar{u}^{\bar{\rho},\square}_k )| \right), 
   \end{align*}
where the upper bound of the interval is taken.
\end{lemma}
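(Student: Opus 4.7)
The plan is to partition the square $[-\pi,\pi]^2$ into the $(2N_1^\mathrm{FFT})(2N_2^\mathrm{FFT})$ translates of the small rectangle $\Delta = \Delta_1 \times \Delta_2$ anchored at the grid points $x_k$, $k \in J_{N^\mathrm{FFT}}$. Each translate has area $\pi^2/(N_1^\mathrm{FFT} N_2^\mathrm{FFT})$ and together they tile the integration domain, so that
\begin{align*}
   C_{\bar{\rho}} \;=\; \frac{1}{(2\pi)^2}\sum_{k\in J_{N^\mathrm{FFT}}}
   \int_\Delta \bigl|\mathcal{G}\bigl(\bar{u}(x_k+\delta - i\bar{\rho})\bigr)\bigr|\, d\delta.
\end{align*}
The strategy is then to bound the integrand uniformly over each small rectangle using the interval-valued enclosure~\eqref{eq:fftbaru}.

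First, I would invoke~\eqref{eq:fftbaru}, which says that $\bar{u}(x_k+\delta-i\bar{\rho}) \in \bar{u}_k^{\bar{\rho},\Delta}$ for every $\delta \in \Delta$. Since $\mathcal{G}$ is analytic and interval arithmetic is inclusion-preserving, it follows that $\mathcal{G}(\bar{u}(x_k+\delta-i\bar{\rho})) \in \mathcal{G}(\bar{u}_k^{\bar{\rho},\Delta})$ as interval enclosures, and consequently
\begin{align*}
   \bigl|\mathcal{G}(\bar{u}(x_k+\delta-i\bar{\rho}))\bigr|
   \;\leq\; \mathrm{upperbound}\bigl(|\mathcal{G}(\bar{u}_k^{\bar{\rho},\Delta})|\bigr)
   \qquad \text{for all } \delta\in\Delta.
\end{align*}
Integrating this pointwise bound over $\delta\in\Delta$ yields
\begin{align*}
   \int_\Delta \bigl|\mathcal{G}\bigl(\bar{u}(x_k+\delta-i\bar{\rho})\bigr)\bigr|\, d\delta
   \;\leq\; \frac{\pi^2}{N_1^\mathrm{FFT} N_2^\mathrm{FFT}}\,
   \mathrm{upperbound}\bigl(|\mathcal{G}(\bar{u}_k^{\bar{\rho},\Delta})|\bigr).
\end{align*}

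Summing over $k\in J_{N^\mathrm{FFT}}$ and dividing by $(2\pi)^2$ produces the prefactor $\frac{1}{4N_1^\mathrm{FFT} N_2^\mathrm{FFT}}$ and delivers the claimed bound, where the outer $\mathrm{upperbound}(\cdot)$ in the statement absorbs both the pointwise bounds and the rounding errors incurred when the finite sum is actually evaluated in interval arithmetic. The only genuinely non-routine point is the justification of the inclusion $\bar{u}(x_k+\delta-i\bar{\rho}) \in \bar{u}_k^{\bar{\rho},\Delta}$, which has already been established in~\eqref{eq:interpretfft1}--\eqref{eq:fftbaru}; granting that, the argument is essentially a Riemann-sum estimate combined with inclusion monotonicity of interval arithmetic, so I do not expect any serious obstacle.
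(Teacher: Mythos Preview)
Your proposal is correct and follows essentially the same argument as the paper: partition $[-\pi,\pi]^2$ into the translates $x_k+\Delta$, bound the integrand on each piece via the interval enclosure~\eqref{eq:fftbaru} and inclusion monotonicity, and then sum. The paper writes the small rectangles as $[x_{1,k_1},x_{1,k_1+1}]\times[x_{2,k_2},x_{2,k_2+1}]$ and inserts an explicit $\sup_{\delta\in\Delta}$ step before invoking~\eqref{eq:fftbaru}, but this is only a cosmetic difference.
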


\begin{proof}
Recalling \eqref{eq:defCrho} and~\eqref{eq:fftbaru}, we have that 
\begin{align*}
C_{\bar{\rho}}&=\frac{1}{(2 \pi)^2} \int_{-\pi} ^{\pi} \int_{-\pi}^{\pi}|\mathcal{G}(\bar{u}(x_1-i \bar{\rho}_1,x_2-i \bar{\rho}_2))| d x_1dx_2\\
 &=\frac{1}{(2 \pi)^2} \sum_{k \in J_{N^\mathrm{FFT}}}
\int_{x_{2,k_2}}^{x_{2,k_2+1}} 
\int_{x_{1,k_1}}^{x_{1,k_1+1}} 
|\mathcal{G}(\bar{u}(x_1-i \bar{\rho}_1,x_2-i \bar{\rho}_2))| d x_1dx_2\\
&\leq\frac{1}{4N_1^{\mathrm{FFT}}N_2^{\mathrm{FFT}}} \sum_{k \in J_{N^\mathrm{FFT}}}\underset{\delta\in\square}{\sup}\bigl|\mathcal{G}(\bar{u}(x_{1,k_1}+\delta_1-i \bar{\rho}_1,x_{2,k_2}+\delta_2-i \bar{\rho}_2))\bigr| \\
&\leq \text{upperbound} \left(\frac{1}{4N_1^{\mathrm{FFT}}N_2^{\mathrm{FFT}}}\sum_{k \in J_{N^\mathrm{FFT}}} |\mathcal{G}(\bar{u}_k^{\rho,\square})| \right). \qedhere
\end{align*}
\end{proof}

\section{Two-dimensional discrete Poisson summation formula}\label{app:poisson}

In this appendix, the two-dimensional discrete Poisson summation formula is proven for our case, which was used in Section \ref{sec:alias}.
We assume that we are in the setting described there, hence in particular that
$\mathcal{G}\circ \bar{u}$ is equal to its uniformly  convergent Fourier series.
\begin{lemma}
For $\bar{b}_n$, the Fourier coefficients of $\mathcal{G}\circ \bar{u}$, and $\bar{b}^\mathrm{FFT}_n$ as defined via \eqref{eq:defbbar1}, we have the discrete Poisson summation formula 
    \begin{align}\label{eq:poisson}
\bar{b}^\mathrm{FFT}_{n}=\bar{b}_{n}+\sum_{\substack{j\in\mathbb{Z}^2 \\ (j_1,j_2)\neq (0,0)}}\bar{b}_{n+2j N^{\mathrm{FFT}}}, \qquad \text {for } n \in J_{N^\mathrm{FFT}}.
\end{align}
\end{lemma}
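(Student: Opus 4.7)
The plan is to substitute the Fourier series expansion of $\mathcal{G}\circ \bar{u}$ into the defining expression \eqref{eq:defbbar1} of $\bar{b}^\mathrm{FFT}_n$, interchange the two summations, and reduce the resulting inner discrete exponential sum to a delta-like condition by a standard geometric-series computation. This separates the modes which ``alias'' onto the frequency $n$ from those which do not, producing exactly the right-hand side of \eqref{eq:poisson}.

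More concretely, I would start by writing
\begin{align*}
\bar{b}^\mathrm{FFT}_n \;=\; \frac{1}{4N_1^\mathrm{FFT}N_2^\mathrm{FFT}} \sum_{k\in J_{N^\mathrm{FFT}}} \Bigl(\sum_{m\in\mathbb{Z}^2}\bar{b}_m e^{i(m_1 x_{1,k_1}+m_2 x_{2,k_2})}\Bigr) e^{-i(n_1 x_{1,k_1}+n_2 x_{2,k_2})},
\end{align*}
which is valid because (by the assumptions in Section~\ref{sec:alias}) the Fourier series of $\mathcal{G}\circ\bar{u}$ converges uniformly, and the outer sum over $k$ has only finitely many terms. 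Interchanging the two sums and factoring the inner $2N_1^\mathrm{FFT}\cdot 2N_2^\mathrm{FFT}$ exponential sum into a product over $k_1$ and $k_2$, the identity reduces to
\begin{align*}
\bar{b}^\mathrm{FFT}_n \;=\; \sum_{m\in\mathbb{Z}^2} \bar{b}_m \prod_{j=1}^{2} \frac{1}{2N_j^\mathrm{FFT}} \sum_{k_j=-N_j^\mathrm{FFT}}^{N_j^\mathrm{FFT}-1} e^{i(m_j-n_j)\pi k_j/N_j^\mathrm{FFT}}.
\end{align*}

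Next I would evaluate each one-dimensional kernel. Writing $\zeta_j := e^{i(m_j-n_j)\pi/N_j^\mathrm{FFT}}$, the sum in the $j$-th factor is a geometric series; either $\zeta_j=1$, which happens exactly when $m_j-n_j$ is an integer multiple of $2N_j^\mathrm{FFT}$, in which case the average equals $1$, or $\zeta_j\neq 1$ and $\zeta_j^{2N_j^\mathrm{FFT}}=1$ so the geometric sum telescopes to $0$. Thus the product is $1$ precisely when $m-n \in 2N^\mathrm{FFT}\mathbb{Z}^2$, i.e., $m = n + 2jN^\mathrm{FFT}$ for some $j\in\mathbb{Z}^2$, and is $0$ otherwise. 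Substituting back,
\begin{align*}
\bar{b}^\mathrm{FFT}_n \;=\; \sum_{j\in\mathbb{Z}^2} \bar{b}_{n+2jN^\mathrm{FFT}} \;=\; \bar{b}_n \;+\; \sum_{\substack{j\in\mathbb{Z}^2\\ j\neq 0}} \bar{b}_{n+2jN^\mathrm{FFT}},
\end{align*}
which is the claimed formula.

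The only step that requires any care is justifying the interchange of summations; this is immediate from uniform convergence of the Fourier series of $\mathcal{G}\circ\bar{u}$ (ensured by analyticity on the strip $S_\rho$ used in Lemma~\ref{lemma:boundb}), together with the finiteness of the outer discrete sum. Everything else is a routine one-dimensional orthogonality-of-characters computation, applied separately in each coordinate, so the two-dimensional case follows immediately by tensorization.
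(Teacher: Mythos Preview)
Your proposal is correct and follows essentially the same approach as the paper's own proof: substitute the Fourier series of $\mathcal{G}\circ\bar{u}$ into the definition of $\bar{b}^\mathrm{FFT}_n$, interchange the (finite outer, uniformly convergent inner) sums, and use the geometric-series/orthogonality identity to reduce the inner exponential sum to the aliasing condition $m-n\in 2N^\mathrm{FFT}\mathbb{Z}^2$. Your write-up is in fact slightly more explicit than the paper's about the tensorization into one-dimensional kernels and the justification for swapping sums, but the argument is the same.
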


\begin{proof}
    The proof mimics the proof for the one-dimensional variant in Section 6.2 of \cite{poisson_briggs}. In our case, we consider the mesh given in \eqref{eq:mesh}. Then, we have that for $k \in J_{N^\mathrm{FFT}}$

    \begin{align*}
        \mathcal{G}\left(\bar{u}\left(x_{1,k_1},x_{2,k_2}\right)\right)=\sum_{j \in \mathbb{Z}^2}\bar{b}_j e^{i(j_1 x_{1,k_1}+j_2 x_{2,k_2})}.
    \end{align*}
   Recalling \eqref{eq:defbbar1}, we obtain 
    \begin{align*}
        \bar{b}^\mathrm{FFT}_{n} &=\frac{1}{4N_1^{\mathrm{FFT}}N_2^{\mathrm{FFT}}} \sum_{k \in J_{N^\mathrm{FFT}}} \mathcal{G}\left(\bar{u}\left(x_{1,k_1},x_{2,k_2}\right)\right) e^{-i n_1 x_{1,k_1}-in_2 x_{2,k_2}}\\
        &=\frac{1}{4N_1^{\mathrm{FFT}}N_2^{\mathrm{FFT}}}\sum_{j \in \mathbb{Z}^2}\bar{b}_j\sum_{k \in J_{N^\mathrm{FFT}}}e^{i  x_{1,k_1}(j_1-n_1)}e^{i  x_{2,k_2}(j_2-n_2)}\\
        &\overset{\eqref{eq:mesh}}{=}\frac{1}{4N_1^{\mathrm{FFT}}N_2^{\mathrm{FFT}}}\sum_{j \in \mathbb{Z}^2}\bar{b}_j\sum_{k \in J_{N^\mathrm{FFT}}}e^{\frac{\pi i k_1}{N^\mathrm{FFT}_1}(j_1-n_1)}e^{\frac{\pi i k_2}{N^\mathrm{FFT}_2}(j_2-n_2)}.
    \end{align*}
By using the geometric summation, we infer that $\sum_{k \in J_{N^\mathrm{FFT}}}e^{\frac{\pi i k_1}{N^\mathrm{FFT}_1}(j_1-n_1)}e^{\frac{\pi i k_2}{N^\mathrm{FFT}_2}(j_2-n_2)}$ vanishes unless $j_1-n_1$ is a multiple of $2N^\mathrm{FFT}_1$ and $j_2-n_2$ is a multiple of $2N^\mathrm{FFT}_2$. Hence, 
    \begin{align*}
        \bar{b}^\mathrm{FFT}_n =\sum_{j \in \mathbb{Z}^2}\bar{b}_{n_1+2j_1N^\mathrm{FFT}_1,n_2+2j_2N^\mathrm{FFT}_2} \, ,
    \end{align*}
    which can be rewritten to \eqref{eq:poisson}.
\end{proof}

\bibliographystyle{abbrv}

\bibliography{MyBib}

\begin{thebibliography}{10}

\bibitem{tacoma_amann}
O.~H. Amann, T.~von Kármán, and G.~B. Woodruff.
\newblock {The Failure of the Tacoma Narrows Bridge}.
\newblock {\em Federal Works Agency}, 1941.

\bibitem{Arioli2017}
G.~Arioli and F.~Gazzola.
\newblock {Torsional instability in suspension bridges: The Tacoma Narrows Bridge case}.
\newblock {\em Communications in Nonlinear Science and Numerical Simulation}, 42:342--357, 2017.

\bibitem{ArioliKoch}
G.~Arioli and H.~Koch.
\newblock Integration of dissipative partial differential equations: a case study.
\newblock {\em SIAM Journal on Applied Dynamical Systems}, 8(3):1119--1133, 2010.

\bibitem{Breden_gPC}
M.~Breden.
\newblock A posteriori validation of generalized polynomial chaos expansions.
\newblock {\em SIAM Journal on Applied Dynamical Systems}, 22(2):765--801, 2023.

\bibitem{intro_breden2}
M.~Breden and J.-P. Lessard.
\newblock {Polynomial interpolation and a priori bootstrap for computer-assisted proofs in nonlinear ODEs}.
\newblock {\em Discrete and Continuous Dynamical Systems - B}, 23(7):2825--2858, 2018.

\bibitem{intro_breden}
M.~Breden, J.-P. Lessard, and J.~D. Mireles-James.
\newblock {Computation of maximal local (un)stable manifold patches by the parameterization method}.
\newblock {\em Indagationes Mathematicae}, 27(1):340--367, 2016.

\bibitem{wave_breuer}
B.~Breuer, J.~Horák, P.~J. McKenna, and M.~Plum.
\newblock {A computer-assisted existence and multiplicity proof for travelling waves in a nonlinearly supported beam}.
\newblock {\em Journal of Differential Equations}, 224(1):60--97, 2006.

\bibitem{Breuer2000}
B.~Breuer, P.~J. McKenna, and M.~Plum.
\newblock Multiple solutions for a semilinear boundary value problem: a computational multiplicity proof.
\newblock {\em Journal of Differential Equations}, 195:243--269, 2000.

\bibitem{poisson_briggs}
W.~L. Briggs and V.~E. Henson.
\newblock {\em {The DFT: An owner's manual for the discrete Fourier transform}}.
\newblock Society for Industrial and Applied Mathematics (SIAM), Philadelphia, PA, 1995.
\newblock {An owner's manual for the discrete Fourier transform}.

\bibitem{Cadiot}
M.~Cadiot, J.-P. Lessard, and J.-C. Nave.
\newblock Rigorous computation of solutions of semilinear {PDE}s on unbounded domains via spectral methods.
\newblock {\em SIAM J. Appl. Dyn. Syst.}, 23(3):1966--2017, 2024.

\bibitem{Cadiot2}
M.~Cadiot, J.-P. Lessard, and J.-C. Nave.
\newblock Stationary non-radial localized patterns in the planar {S}wift-{H}ohenberg {PDE}: constructive proofs of existence.
\newblock {\em J. Differential Equations}, 414:555--608, 2025.

\bibitem{sb_chen}
Y.~Chen and P.~J. McKenna.
\newblock {Travelling Waves in a Nonlinearly Suspended Beam: Some Computational Results and Four Open Questions}.
\newblock {\em Philosophical Transactions: Mathematical, Physical and Engineering Sciences}, 355, 1997.

\bibitem{radii_Day}
S.~Day, J.-P. Lessard, and K.~Mischaikow.
\newblock {Validated continuation for equilibria of PDEs}.
\newblock {\em SIAM Journal on Numerical Analysis}, 45(4):1398--1424, 2007.

\bibitem{Drabek_overview}
P.~Drábek, G.~Holubová, A.~Matas, and P.~Nečesal.
\newblock {Nonlinear models of suspension bridges: discussion of the results}.
\newblock {\em Application of Mathematics}, 48(6):497--514, 2003.

\bibitem{Figueras}
J.-L. Figueras, A.~Haro, and A.~Luque.
\newblock {Rigorous Computer-Assisted Application of KAM Theory: A Modern Approach}.
\newblock {\em Found Comput Math}, 17:1123--1193, 2017.

\bibitem{Gazzola_overview}
F.~Gazzola.
\newblock {Nonlinearity in oscillating bridges}.
\newblock {\em Electronic Journal of Differential Equations}, 211:1--47, 2013.

\bibitem{gomez_overview}
J.~G\'{o}mez-Serrano.
\newblock {Computer-assisted proofs in PDE: a survey.}
\newblock {\em SeMA Journal}, 76(3):459--484, 2019.

\bibitem{Hormander}
L.~H{\"o}rmander.
\newblock {\em The Analysis of Linear Partial Differential Operators I}.
\newblock Springer-Verlag Berlin Heidelberg, 1990.

\bibitem{SB_horak}
J.~Horák and P.~J. McKenna.
\newblock {Traveling Waves in Nonlinearly Supported Beams and Plates}.
\newblock {\em Progress in Nonlinear Differential Equations and their Applications}, 54:197--215, 2003.

\bibitem{intro_hungria}
A.~Hungria, J.-P. Lessard, and J.~M. James.
\newblock Rigorous numerics for analytic solutions of differential equations: the radii polynomial approach.
\newblock {\em Mathematics of Computation}, 85:1427--1459, 2016.

\bibitem{McKenna_advanced}
P.~J. McKenna and C.~O. Tuama.
\newblock Large torsional oscillations in suspension bridges visited again: Vertical forcing creates torsional response.
\newblock {\em The American Mathematical Monthly}, 108(8):738--745, 2001.

\bibitem{sb_mckenna}
P.~J. McKenna and W.~Walter.
\newblock {Travelling waves in a suspension bridge}.
\newblock {\em SIAM Journal on Applied Mathematics}, 50:703--715, 1990.

\bibitem{Moore_advanced}
K.~S. Moore.
\newblock {Large Torsional Oscillations in a Suspension Bridge: Multiple Periodic Solutions to a Nonlinear Wave Equation}.
\newblock {\em SIAM Journal on Mathematical Analysis}, 33(6):1411--1429, 2002.

\bibitem{moore}
R.~Moore and F.~Bierbaum.
\newblock {\em Methods and applications of interval analysis}, volume~2.
\newblock Society for Industrial and Applied Mathematics, 1979.

\bibitem{SB_nagatou}
K.~Nagatou, M.~Plum, and P.~J. McKenna.
\newblock {Orbital stability investigations for travelling waves in a nonlinearly supported beam}.
\newblock {\em Journal of Differential Equations}, 268:80--114, 2019.

\bibitem{Nakao}
M.~T. Nakao.
\newblock {A numerical approach to the proof of existence of solutions for elliptic problems}.
\newblock {\em Japan Journal of Applied Mathematics}, 5(2):313--332, 1988.

\bibitem{BookOverviewCAP}
M.~T. Nakao, M.~Plum, and Y.~Watanabe.
\newblock {\em {Numerical Verification Methods and Computer-Assisted Proofs for Partial Differential Equations}}, volume {53 of \textit{Springer Series in Computational Mathematics}}.
\newblock Springer Singapore, 2019.

\bibitem{Oishi}
S.~Oishi.
\newblock Numerical verification of existence and inclusion of solutions for nonlinear operator equations.
\newblock {\em Journal of Computational and Applied Mathematics}, 60(1-2):171--185, 1995.

\bibitem{PeletierTroy}
L.~Peletier and W.~Troy.
\newblock Multibump periodic travelling waves in suspension bridges.
\newblock {\em Proceedings of the Royal Society of Edinburgh: Section A Mathematics}, 128(3):631--659, 1998.

\bibitem{Plum}
M.~Plum.
\newblock {Explicit H2-estimates and pointwise bounds for solutions of second-order elliptic boundary value problems}.
\newblock {\em Journal of Mathematical Analysis and Applications}, 165(1):36--61, 1992.

\bibitem{Plum_1995}
M.~Plum.
\newblock {Existence and enclosure results for continua of solutions of parameter-dependent nonlinear boundary value problems}.
\newblock {\em J. Comput. Appl. Math.}, 60(1-2):187--200, 1995.

\bibitem{intro_reinhardt}
C.~Reinhardt and J.~D. Mireles-James.
\newblock {Fourier-Taylor parameterization of unstable manifolds for parabolic partial differential equations: Formalism, implementation and rigorous validation}.
\newblock {\em Indagationes Mathematicae}, 30(1):39--80, 2019.

\bibitem{IntLab_Rump}
S.~Rump.
\newblock {INTLAB - INTerval LABoratory}.
\newblock In T.~Csendes, editor, {\em {Developments~in~Reliable Computing}}, pages 77--104. Kluwer Academic Publishers, Dordrecht, 1999.
\newblock \url{http://www.tuhh.de/ti3/rump/}.

\bibitem{Santra_stability}
S.~Santra.
\newblock {On the Lazer-McKenna conjecture and its applications}.
\newblock {\em Proceedings of the American Mathematical Society}, 143(10):4361--4372, 2015.

\bibitem{SantraWei}
S.~Santra and J.~Wei.
\newblock Homoclinic solutions for fourth order traveling wave equations.
\newblock {\em SIAM Journal on Mathematical Analysis}, 41(5):2038--2056, 2009.

\bibitem{Smets_homoclinic}
D.~Smets and J.~B. van~den Berg.
\newblock {Homoclinic solutions for Swift-Hohenberg and suspension bridge type equations}.
\newblock {\em Journal of Differential Equations}, 184:78--96, 2002.

\bibitem{Takayasu}
A.~Takayasu, X.~Liu, and S.~Oishi.
\newblock Verified computations to semilinear elliptic boundary value problems on arbitrary polygonal domains.
\newblock {\em Nonlinear Theory and Its Applications, IEICE}, 4(1):34--61, 2013.

\bibitem{JP_FFT}
J.~B. van~den Berg, M.~Breden, J.-P. Lessard, and J.~D. Mireles-James.
\newblock Computer-assisted proofs in nonlinear dynamics: a spectral approach based on fourier analysis, 2024.
\newblock In preparation.

\bibitem{BergBreden2018}
J.~B. van~den Berg, M.~Breden, J.-P. Lessard, and M.~Murray.
\newblock Continuation of homoclinic orbits in the suspension bridge equation: a computer-assisted proof.
\newblock {\em Journal of Differential Equations}, 264(5):3086--3130, 2010.

\bibitem{intro_berg2}
J.~B. van~den Berg, M.~Breden, and R.~Sheombarsing.
\newblock {Validated integration of semilinear parabolic PDEs}, 2023.
\newblock Preprint arXiv:2305.08221v1.

\bibitem{intro_berg3}
J.~B. van~den Berg and J.~F. Williams.
\newblock {Rigorously computing symmetric stationary states of the Ohta-Kawasaki problem in three dimensions}.
\newblock {\em SIAM Journal on Mathematical Analysis}, 51(1):131--158, 2019.

\bibitem{github}
L.~van~der Aalst, J.~B. van~den Berg, and J.-P. Lessard.
\newblock {MATLAB code corresponding to the paper "Periodic localized traveling waves in the two-dimensional suspension bridge equation"}.
\newblock 2024.
\newblock \url{https://github.com/ljwvda/SuspensionBridge2D.git}.

\bibitem{Piotr2004}
P.~Zgliczyński.
\newblock {Rigorous Numerics for Dissipative Partial Differential Equations II. Periodic Orbit for the Kuramoto–Sivashinsky PDE—A Computer-Assisted Proof}.
\newblock {\em Foundations of Computational Mathematics}, 4(2):157--185, 2004.

\end{thebibliography}

\end{document}